\documentclass[11pt]{amsart}
\usepackage[a4paper,margin=1in]{geometry}
\usepackage[T1]{fontenc}
\usepackage{lmodern}
\usepackage{microtype}
\usepackage{amsmath,amssymb,amsthm,mathtools}
\usepackage{mathrsfs}
\usepackage{enumitem}
\usepackage{aliascnt}
\usepackage{booktabs}
\usepackage{xcolor}
\usepackage[colorlinks=true,linkcolor=blue,citecolor=blue,urlcolor=blue]{hyperref}
\usepackage[nameinlink,capitalise,noabbrev]{cleveref}

\allowdisplaybreaks

\numberwithin{equation}{section}
\newtheorem{theorem}{Theorem}[section]

\newaliascnt{proposition}{theorem}
\newtheorem{proposition}[proposition]{Proposition}
\aliascntresetthe{proposition}

\newaliascnt{lemma}{theorem}
\newtheorem{lemma}[lemma]{Lemma}
\aliascntresetthe{lemma}

\newaliascnt{corollary}{theorem}
\newtheorem{corollary}[corollary]{Corollary}
\aliascntresetthe{corollary}

\newaliascnt{definition}{theorem}
\newtheorem{definition}[definition]{Definition}
\aliascntresetthe{definition}

\newaliascnt{remark}{theorem}
\newtheorem{remark}[remark]{Remark}
\aliascntresetthe{remark}

\crefname{theorem}{Theorem}{Theorems}
\Crefname{theorem}{Theorem}{Theorems}

\crefname{proposition}{Proposition}{Propositions}
\Crefname{proposition}{Proposition}{Propositions}

\crefname{lemma}{Lemma}{Lemmas}
\Crefname{lemma}{Lemma}{Lemmas}

\crefname{corollary}{Corollary}{Corollaries}
\Crefname{corollary}{Corollary}{Corollaries}

\crefname{definition}{Definition}{Definitions}
\Crefname{definition}{Definition}{Definitions}

\crefname{remark}{Remark}{Remarks}
\Crefname{remark}{Remark}{Remarks}

\usepackage{tikz}
\usetikzlibrary{arrows.meta,decorations.pathreplacing}

\definecolor{qred}{RGB}{190,45,55}
\definecolor{stageblue}{RGB}{76,120,168}
\definecolor{stageteal}{RGB}{47,151,142}
\definecolor{stageorange}{RGB}{242,142,43}
\definecolor{stagepurple}{RGB}{141,101,166}
\definecolor{rvblue}{RGB}{72,117,166}
\definecolor{rvdark}{RGB}{27,55,87}
\definecolor{rvorange}{RGB}{232,145,65}
\definecolor{rvshade}{RGB}{220,232,242}
\definecolor{rvprob}{RGB}{224,224,224}
\usetikzlibrary{arrows.meta,calc,decorations.pathmorphing,patterns.meta}
\hypersetup{
  pdftitle={No Gelation and Global Existence for a Boltzmann Equation
  with Regularly Varying Mass-Exchange Rates},
  pdfauthor={Siwei Luo and Jian-Guo Liu}
}

\newcommand{\R}{\mathbb R}
\newcommand{\Sph}{\mathbb S^{d-1}}

\newcommand{\one}{\mathbf 1}
\newcommand{\norm}[1]{\left\|#1\right\|}
\newcommand{\abs}[1]{\left|#1\right|}
\newcommand{\pair}[2]{\left\langle#1,#2\right\rangle}

\title[No Gelation and Global Existence for BME with RV Mass-Exchange Rates]{No Gelation and Global Existence for a Boltzmann Equation with Regularly Varying Mass-Exchange Rates}
\author{Siwei Luo}
\address{School of the Gifted Young, University of Science and Technology of China,
Hefei 230026, China}
\email{luosw@mail.ustc.edu.cn}

\author{Jian-Guo Liu}
\address{Departments of Mathematics and Physics, Duke University,
Durham, North Carolina 27708, USA}
\email{jian-guo.liu@duke.edu}
\date{July 2026}
\subjclass[2020]{Primary 35Q20; Secondary 35A01, 35D30, 82C40.}
\keywords{Boltzmann equation with mass exchange, gelation, global weak solutions, hard potentials, superlinear moment estimates.}
\begin{document}
\begin{abstract}
We develop a no-gelation mechanism that yields global solutions to the spatially homogeneous Boltzmann equation with mass exchange for Grad-cutoff hard potentials $B=E^\gamma b(\xi), 0<\gamma<1$ and regularly varying mass-exchange rates. 
Without assuming any higher mass moment, we construct a convex superlinear weight assembled from dyadic hinges. Production of the weighted moment by collisions between large particles of comparable mass is absorbed by dissipation through collisions with a uniformly populated reservoir of bounded-mass particles. Regular variation makes the signed increments in these two collision configurations comparable, while mass conservation and $\gamma<1$ yield the vanishing factor $L^{\gamma-1}$. This yields a uniform moment bound of the mass-cutoff approximations on every finite time interval and rules out finite-time mass escape to infinity. Consequently, for every nonnegative initial density with finite physical moments, we obtain a global integral weak solution.
\end{abstract}
\maketitle\tableofcontents
\section{Introduction}
Size redistribution driven by mass transfer via aggregation or
collision is observed in systems such as branched polymerization,
aerosol and colloid dynamics, and droplet collisions
\cite{Flory1941,Friedlander2000,AshgrizPoo1990}.  A central question in the resulting
size-structured kinetic equations is whether repeated interactions can
transport a positive part of the total mass across an unbounded sequence
of size scales within a finite time.  This phenomenon is called
{\bf \emph{gelation}} in coagulation theory.  Mathematically, it appears as a
loss of the first mass moment carried by the finite-size distribution
\cite{Aldous1999,Leyvraz2003,BanasiakLambLaurencot2019}.
The gelation process does not contradict the exact conservation of mass
in each elementary interaction, because local conservation laws alone
cannot control the total mass flux moving toward \(m=\infty\).
Therefore, ruling out gelation requires a quantitative mechanism capable
of both controlling this transfer toward large masses and ensuring the
conservation of the first moment at every finite time.  Finding such a
mechanism for the Boltzmann equation with mass exchange, under physically
natural exchange rates that may grow up to the linear scale, is the main
purpose of this paper.

The standard example of this issue is the Smoluchowski coagulation
equation.  In this process, each binary event merges two clusters into a
single cluster with a mass equal to the sum of the two, thereby generating
a one-way flux toward larger sizes.  The growth of the coagulation kernel
determines whether this flux is compatible with the conservation of the
first moment: for a broad class of homogeneous kernels of degree greater
than \(1\), finite-time gelation occurs, whereas appropriate subcritical growth
conditions can yield mass-conserving solutions
\cite{EscobedoMischlerPerthame2002,
EscobedoLaurencotMischlerPerthame2003,
BanasiakLambLaurencot2019}.  In coagulation--fragmentation systems,
fragmentation introduces a competing flux toward smaller sizes, and its
strength relative to coagulation can lead either to global mass
conservation or to persistent gelation
\cite{EscobedoMischlerPerthame2002,
EscobedoLaurencotMischlerPerthame2003,
LaurencotVanRoessel2015}.  Exchange-driven growth is also related to the
present problem, in which each interaction transfers mass between two
clusters and can move mass in either direction.  Nevertheless,
sufficiently rapid exchange can still drive mass to infinity.  Recent
results identify regimes of ordinary growth, finite-time gelation, and
instantaneous gelation
\cite{BenNaimKrapivsky2003,SiGiri2025}.

The Boltzmann equation with mass exchange (BME), introduced by Degond
and Liu in a discrete-mass setting \cite{DegondLiu2025} and later studied
directly for continuous masses in \cite{LuoLiu2026}, places this
redistribution mechanism in a coupled mass--velocity phase space.  An
incoming pair \((m,v)\), \((m_1,v_1)\), with total mass \(S=m+m_1\), is
replaced by two outgoing particles with masses \(\alpha S\) and
\((1-\alpha)S\), while the post-collisional velocities are chosen so that
total mass, momentum, and kinetic energy are conserved in every
collision.  The event has the \(2\to2\) structure and therefore also
preserves particle number.  At the opposite boundary \(m=0\),
fragmentation processes may exhibit {\bf\emph{shattering}}, in which a
positive amount of mass is transferred to particles of vanishing size
in finite time.  For the continuous-mass BME, we ruled out this
phenomenon in
\cite[Proposition~4.2 and Lemma~8.5]{LuoLiu2026}.
The present work concerns the large-mass boundary: conservation in each
collision still allows repeated redistributions to transfer mass through
successively larger scales.  Indeed, each collision compares two
partitions of the same total mass: the outgoing partition may be more
balanced or more unequal than the incoming one, so the collision
increment of a convex function that detects large masses has no fixed
sign.  Moreover, the collision frequency contains the hard-potential
factor \(E^\gamma\), where
\(E=\dfrac{mm_1}{m+m_1}|v-v_1|^2\), and hence couples the mass partition
to the relative velocity.  The no-gelation problem for the BME therefore
involves a signed flux across large mass scales with a velocity-dependent
weight, a structure that does not arise in the classical coagulation
equation or in exchange-driven models without velocity.

The available Cauchy theory reflects this difficulty.  For bounded
continuous symmetric mass-exchange rates,
\cite[Theorem~2.2]{LuoLiu2026} constructs global nonnegative
\(L^1\)-integral weak solutions from initial data with finite particle
number, total mass, and kinetic energy.  When boundedness of the
mass-exchange rate is replaced by the general linear-growth condition
\(a(m,m_1,\alpha)\lesssim 1+m+m_1\), the same work establishes a conservative local 
theory under an additional weighted-moment assumption.
More precisely, writing
\[
 H_q(f)(t)=\int_X(1+m+m|v|^2)^qf(t,x)\,dx,
\]
local existence holds for \(H_q(f_0)<\infty\) with \(q\geq1+\gamma\),
and the solution can be continued across a finite time \(T\) provided
that \(H_{1+\gamma}(f)\in L^1(0,T)\)
\cite[Theorems~8.1 and~8.8]{LuoLiu2026}.  Since the three physical
moments alone do not control this critical continuation quantity,
these estimates do not give a global theory for linearly growing rates.
This leaves open whether additional, physically natural structure of
the exchange kernel can prevent the escape of mass to infinity and yield
global mass-conserving solutions from the physical moments alone.

We resolve this question for exchange laws that separate the
activation rate of an exchange event from its outcome.
More precisely, we assume
\[
 a(m,m_1,\alpha)=\lambda(m,m_1)g(\alpha),
 \qquad S=m+m_1,
\]
where \(g\) is a continuous symmetric probability density and
\(\lambda(m,m_1)\asymp\kappa(S)\) uniformly for \(m,m_1>0\).
The function \(\lambda\) measures the effective capacity of the exchange channels
activated by an encounter, whereas \(g(\alpha)\,d\alpha\) is the
conditional law of the outgoing share
\(\alpha=m'/S\).  A concrete realization of a fixed \(g\) is a
mixing-dominated temporary-coalescence regime.  Two droplets first
form a connected liquid complex, exchange material through 
collision-induced internal motion, and then separate into two
remnants
\cite{AnidjarTambourGreenberg1995}.  If internal mixing occurs before separation, the
temporary complex may lose much of the information about the incoming
partition \(m/S\).  Its separation can then be only modeled in terms of the
dimensionless share \(\alpha\). The comparison
\(\lambda(m,m_1)\asymp\kappa(m+m_1)\) has another physical meaning.
Writing \(\theta=m/S\), it requires the mass ratio to modify the
total-size intensity only by a bounded factor.  This property holds, for
example, for the following standard collision geometries.  For compact
constant-density particles in \(d\) dimensions, \(m=c_\rho r^d\), and
the combined-radius cross section satisfies
\[
 (r+r_1)^{d-1}
 =c_\rho^{-(d-1)/d}S^{(d-1)/d}
  \bigl[\theta^{1/d}+(1-\theta)^{1/d}\bigr]^{d-1}
 \asymp S^{(d-1)/d}
\]
uniformly for \(0\leq\theta\leq1\)
\cite{DegondLiu2025}.  Likewise, if the two particles
contribute additive \(p\)-homogeneous active capacities, 
then
\[
 S^p\leq m^p+m_1^p\leq 2^{1-p}S^p,
 \qquad 0\leq p\leq1,
\]
while an activity attached directly to the temporary combined complex
is a function of its conserved total mass \(S\) itself.  We assume that
\(\kappa\) satisfies the global linear bound
\(\kappa(S)\lesssim1+S\) and is regularly varying at infinity with index
\(p\in[0,1]\).  Regular variation describes an asymptotically
self-similar activity law, where the power \(S^p\) gives the leading
growth while a slowly varying factor allows smaller corrections caused
by shielding, porosity, or slowly changing accessibility.  Experimental
mass--radius laws for colloidal aggregates and calculations of
accessible surface area, diffusion-controlled reaction rates, and
transport to fractal aerosol aggregates show such leading power-law
behavior
\cite{FilippovZuritaRosner2000}.  These physical laws motivate
the regularly varying regimes described later in
\cref{rem:model-physical-content}.

For every Grad-cutoff hard-potential kernel
\(B(E,\xi)=E^\gamma b(\xi)\), with \(0<\gamma<1\), and every
nonnegative initial datum satisfying
\[
 \int_X(1+m+m|v|^2)f_0\,dx<\infty,
\]
our main theorem constructs a global nonnegative integral weak solution
that conserves particle number and total mass and has nonincreasing
kinetic energy.  More precisely, the initial mass tail is used to
construct a continuous nonnegative convex function \(\Phi\), with
\(\Phi(m)/m\to\infty\), and the corresponding \(\Phi\)-moment remains
uniformly bounded on every finite time interval along the bounded-kernel
approximations.  This tail-adapted superlinear estimate prevents mass
from escaping through \(m=\infty\) and allows total mass to pass to the
limit.  In particular, global no-gelation holds even at the additive
linear scale without assuming any prescribed superlinear mass moment.

The proof begins with a convex superlinear mass weight built from dyadic
hinge functions:
\[
 \Phi(m)=m+\sum_{k\geq0}q_k(m-L_k)_+,
 \qquad L_k=2^kL_0.
\]
The coefficients \(q_k\) are chosen from the actual initial mass tail so
that the initial \(\Phi\)-moment is finite, while \(\Phi\) is
superlinear and the neighboring coefficients remain comparable under
fixed changes of scale.  Thus the construction imposes no predetermined
polynomial rate of growth.

The main estimate uses the sign of the collision increment of \(\Phi\).
An exact formula for each hinge function shows that collisions between
sufficiently unequal incoming masses have a strictly negative
\(g\)-averaged increment.  Positive production at large total mass is
therefore confined to comparable incoming pairs.  On each finite
time interval, the small-mass estimate and conservation of particle
number and total mass provide a fixed bounded mass interval containing
a uniformly positive number of particles.  Collisions between a large
particle and particles in this interval give the negative contribution
used to control the positive contribution from two large comparable
particles.  At a dyadic scale \(L_j\), regular variation and the
comparison \(\lambda(m,m_1)\asymp\kappa(m+m_1)\) bound the positive
large--large increment from above by
\(C\kappa(L_j)L_jq_j\), while they bound the magnitude of the negative
large--bounded increment from below by
\(c\kappa(L_j)L_jq_j\).  This common scale cancels in the integral
comparison.  The hard-potential frequency comparison contributes
\(L_j^\gamma\), whereas the mass bound on the population of the
corresponding shell contributes \(L_j^{-1}\).  Consequently, the
positive large--large contribution is bounded by a constant times
\(L_j^{\gamma-1}\) times the available negative large--bounded
contribution.  Since \(\gamma<1\), this factor tends to zero, and the
large-scale production can be controlled after summation over the
dyadic scales.  This yields a bound for the \(\Phi\)-moment that is
uniform in \(N\) and \(t\in[0,T]\), for every \(T>0\).

The limiting procedure is carried out in the space of finite Radon
measures.  Viewing the mass-cutoff approximate solutions as
\(\mu_N(t)=f_N(t,x)\,dx\), the small-mass estimate and the physical
moment bounds give tightness of the one-particle measures, while the
\(\Phi\)-estimate supplies the first-moment control needed to preserve
total mass.  Prokhorov compactness, together with the uniform
bounded-Lipschitz time estimate and the metric-valued
Arzelà--Ascoli theorem, yields a global narrowly continuous
measure-valued limit.  Localized collision-rate estimates, together
with the small- and large-mass controls, provide tightness of the
collision measures.  Localization to compact sets then identifies the
limiting collision form and removes the mass cutoff.

Since narrow convergence alone does not exclude singular limits,
absolute continuity is recovered in a separate step.  Total-variation
continuity places the singular parts of \(\mu_t\), on each finite time
interval, on a common Lebesgue-null set and the mixed gain terms are
absolutely continuous.  Testing the measure equation on this set then
shows that the singular particle number \(\sigma_t(X)\) is
nonincreasing.  It vanishes initially and hence vanishes for every
\(t\geq0\).  Thus \(\mu_t=f(t,x)\,dx\), and the collision measure admits
a jointly measurable density in
\(L^\infty(0,\infty;L^1(X))\).  The measure equation consequently
becomes the global \(L^1\)-valued Bochner integral equation and yields
the stated \(W^{1,\infty}\)-regularity in time.

The remainder of the paper is organized as follows.
\Cref{sec:model} introduces the continuous-mass collision geometry,
formulates the regularly varying mass-exchange assumptions and their
physical interpretation, and states the main theorem.
\Cref{sec:mass-cutoff-approximation} constructs the mass-cutoff
approximate solutions, establishes the uniform collision-rate and
time-regularity estimates, and produces a fixed bounded mass interval
containing a positive number of particles.  \Cref{sec:no-gelation}
constructs the tail-adapted mass weight \(\Phi\), derives the signed
redistribution estimate, and controls the large-comparable contribution
by the large--bounded contribution.  Finally,
\Cref{sec:passage-limit} obtains the global measure-valued limit,
identifies the limiting collision measure, proves absolute continuity,
and recovers the \(L^1\)-valued integral formulation.

\section{Model, Assumptions, and Main Results}
\label{sec:model}
\subsection{Collision geometry with mass exchange}
\label{subsec:collision-geometry}
We first recall the collision geometry with mass exchange introduced
in \cite{DegondLiu2025,LuoLiu2026}.  Fix an integer \(d\geq1\) and set
\begin{equation}
 X=(0,\infty)_m\times\R^d_v,
 \qquad x=(m,v),\qquad x_1=(m_1,v_1),
 \qquad dx=dm\,dv.
 \label{eq:model-state-space}
\end{equation}
For an incoming pair, write
\begin{equation}
 S=m+m_1,
 \qquad
 \theta=\frac{m}{S},
 \qquad
 V=\frac{mv+m_1v_1}{S},
 \qquad
 u=v-v_1.
 \label{eq:model-incoming-coordinates}
\end{equation}
Given a redistribution share \(\alpha\in(0,1)\) and
\(\omega\in\Sph\), let
\begin{equation}
 R_\omega u=u-2\pair{u}{\omega}\omega,
 \qquad
 s=\left(\frac{\theta(1-\theta)}
 {\alpha(1-\alpha)}\right)^{1/2}.
 \label{eq:model-reflection-factor}
\end{equation}
The outgoing variables are
\begin{align}
 m'&=\alpha S,
 &v'&=V+(1-\alpha)sR_\omega u,
 \label{eq:model-first-output}\\
 m_1'&=(1-\alpha)S,
 &v_1'&=V-\alpha sR_\omega u.
 \label{eq:model-second-output}
\end{align}
They satisfy
\begin{align}
 m'+m_1'&=m+m_1,
 \label{eq:model-mass-conservation}\\
 m'v'+m_1'v_1'&=mv+m_1v_1,
 \label{eq:model-momentum-conservation}\\
 m'\abs{v'}^2+m_1'\abs{v_1'}^2
 &=m|v|^2+m_1\abs{v_1}^2.
 \label{eq:model-energy-conservation}
\end{align}
The reduced relative kinetic energy is
\begin{equation}
 E(x,x_1)=\frac{mm_1}{m+m_1}\abs{v-v_1}^2
 =S\theta(1-\theta)\abs u^2,
 \qquad
 0\leq E(x,x_1)\leq m|v|^2+m_1\abs{v_1}^2.
 \label{eq:model-reduced-energy}
\end{equation}

For a Borel function \(\psi:X\to\R\), put
\begin{equation}
 \Delta\psi
 =\psi(m',v')+\psi(m_1',v_1')
 -\psi(m,v)-\psi(m_1,v_1).
 \label{eq:model-collision-increment}
\end{equation}
Fix \(e_0\in\Sph\) once and for all, and adopt the Borel convention
\begin{equation}
 {u}/{\abs u}:=e_0
 \qquad\text{when }u=0.
 \label{eq:zero-relative-velocity-convention}
\end{equation}
Whenever the integral is absolutely convergent, define
\begin{align}
 \mathcal Q(h,h)[\psi]
 :=\frac12\int_{X^2}\int_0^1\int_{\Sph}
 &a(m,m_1,\alpha)E(x,x_1)^\gamma
 b\left(\frac{u}{\abs u}\cdot\omega\right)
 \Delta\psi
 h(x)h(x_1)\,d\omega\,d\alpha\,dx_1\,dx.
 \label{eq:model-static-collision-form}
\end{align}
The value chosen in \eqref{eq:zero-relative-velocity-convention} does
not affect any collision integral because \(E(x,x_1)^\gamma=0\) when
\(u=0\).

\subsection{Assumptions}We now state the assumptions on the collision kernel $B$, mass-exchange kernel $a$ and initial data $f_0$.

\textbf{Hard-potential collision kernel with Grad cutoff.}
\label{subsec:model-assumptions}
The angular kernel satisfies
\begin{equation}
 B(E,\xi)=E^\gamma b(\xi),
 \qquad 0<\gamma<1,
 \qquad b:[-1,1]\to[0,\infty)\ \hbox{measurable},
 \label{eq:model-hard-potential}
\end{equation}
and, for one and hence every \(e\in\Sph\),
\begin{equation}
 \int_{\Sph}b(e\cdot\omega)\,d\omega
 =:\norm{b}_{L^1(\Sph)}<\infty.
 \label{eq:model-angular-cutoff}
\end{equation}
\textbf{(RV) Regularly varying mass-exchange rate with a fixed
redistribution law.}
The mass-exchange rate is assumed to satisfy precisely
\eqref{eq:model-fixed-share}--\eqref{eq:model-regular-variation}:
\begin{align}
 &g\in C([0,1];[0,\infty)),
 \qquad g(\alpha)=g(1-\alpha),
 \qquad
 \int_0^1g(\alpha)\,d\alpha=1,
 \label{eq:model-fixed-share}\\
 &\lambda\in C((0,\infty)^2;(0,\infty)),
 \qquad \lambda(m,m_1)=\lambda(m_1,m),
 \qquad
 a(m,m_1,\alpha)=\lambda(m,m_1)g(\alpha),
 \label{eq:model-rate-disintegration}\\
 &\kappa\in C((0,\infty);(0,\infty)),
 \qquad 0<\lambda_-\leq\lambda_+<\infty,
 \qquad
 \lambda_-\kappa(S)
 \leq\lambda(m,m_1)
 \leq\lambda_+\kappa(S),
 \label{eq:model-intensity-comparison}\\
 &p\in[0,1],
 \qquad 0<\kappa_+<\infty,
 \qquad \kappa(S)\leq\kappa_+(1+S),
 \qquad
 \lim_{S\to\infty}\frac{\kappa(\zeta S)}{\kappa(S)}
 =\zeta^p
 \quad\text{for every }\zeta>0.
 \label{eq:model-regular-variation}
\end{align}
In particular,
\begin{equation}
 0\leq a(m,m_1,\alpha)
 \leq a_+(1+m+m_1),
 \qquad
 a_+=\lambda_+\kappa_+\norm g_{L^\infty(0,1)}.
 \label{eq:model-linear-upper}
\end{equation}

\textbf{Initial conditions.} For a nonnegative density \(h\) on \(X\), set
\begin{equation}
 \begin{aligned}
 M_0(h)&=\int_X h(x)\,dx,
 &M_1(h)&=\int_X m h(x)\,dx,\\
 M_2(h)&=\int_X m|v|^2h(x)\,dx,
 &P(h)&=\int_X mv h(x)\,dx.
 \end{aligned}
 \label{eq:model-physical-moments}
\end{equation}
For a finite nonnegative Radon measure \(\mu\) on \(X\), define $M_0(\mu),M_1(\mu),M_2(\mu)$, and $P(\mu)$ by the same formulas with \(h\,dx\) replaced by \(d\mu\).
The initial datum is assumed to satisfy
\begin{equation}
 f_0\in L^1\bigl(X;(1+m+m|v|^2)\,dx\bigr),
 \qquad f_0\geq0.
 \label{eq:model-initial-data}
\end{equation}

\subsection{Main results}
\label{subsec:model-main-results}

\begin{definition}[Global integral weak solution]
\label{def:model-integral-solution}
A nonnegative function \(f\) is a global integral weak solution
 with initial datum \(f_0\) if
\begin{equation}
 f\in C([0,\infty);L^1(X))
 \cap L^\infty_{\mathrm{loc}}
 (0,\infty;L^1(X;(1+m+m|v|^2)\,dx)),
 \label{eq:model-solution-space}
\end{equation}
the collision form is represented for almost every \(t>0\) by a
density \(\mathbf Q(f(t),f(t))\in L^1(X)\), satisfying
\[
 \int_X\psi(x)\mathbf Q(f(t),f(t))(x)\,dx
 =\mathcal Q(f(t),f(t))[\psi],
 \qquad \psi\in L^\infty(X),
\]
Moreover, the map
\(t\mapsto\mathbf Q(f(t),f(t))\) belongs to
\(L^1_{\mathrm{loc}}(0,\infty;L^1(X))\), and
\begin{equation}
 f(t)=f_0+\int_0^t\mathbf Q(f(s),f(s))\,ds
 \qquad\text{in }L^1(X),\quad t\geq0,
 \label{eq:model-Bochner-identity}
\end{equation}
where the integral is a Bochner integral.  Equivalently, for every
bounded Borel function \(\psi:X\to\R\) and every \(t\geq0\),
\begin{equation}
 \int_X f(t,x)\psi(x)\,dx
 =\int_X f_0(x)\psi(x)\,dx
 +\int_0^t\mathcal Q(f(s),f(s))[\psi]\,ds.
 \label{eq:model-bounded-test-identity}
\end{equation}
\end{definition}

\begin{theorem}[Global existence and no gelation]
\label{thm:model-global-no-gelation}
Assume \eqref{eq:model-hard-potential}--\eqref{eq:model-regular-variation},
and let \(f_0\) satisfy \eqref{eq:model-initial-data}.  Then there exists a global integral
weak solution \(f\geq0\) such that
\begin{equation}
 f\in W^{1,\infty}(0,\infty;L^1(X))
 \cap L^\infty
 (0,\infty;L^1(X;(1+m+m|v|^2)\,dx)),
 \label{eq:model-global-solution-class}
\end{equation}
and
\begin{equation}
 \mathbf Q(f,f)\in L^\infty(0,\infty;L^1(X)).
 \label{eq:model-global-collision-density}
\end{equation}
For every \(t\geq0\),
\begin{equation}
 M_0(f(t))=M_0(f_0),
 \qquad
 M_1(f(t))=M_1(f_0),
 \qquad
 M_2(f(t))\leq M_2(f_0).
 \label{eq:model-moment-laws}
\end{equation}

More precisely, there is a continuous, nonnegative, convex function
\(\Phi:[0,\infty)\to[0,\infty)\), depending only on \(f_0\), such
that
\begin{equation}
 \int_X\Phi(m)f_0(x)\,dx<\infty,
 \qquad
 \frac{\Phi(m)}m\longrightarrow\infty
 \quad \text{ as }m\to\infty,
 \label{eq:model-superlinear-weight}
\end{equation}
and the mass-cutoff approximate solutions \(f_N\) constructed
below satisfy, for every \(T>0\),
\begin{equation}
 \sup_{N\geq1}\sup_{0\leq t\leq T}
 \int_X\Phi(m)f_N(t,x)\,dx<\infty.
 \label{eq:model-uniform-Phi-bound}
\end{equation}
Consequently,
\begin{align}
 \lim_{R\to\infty}\sup_{N\geq1}\sup_{0\leq t\leq T}
 \int_{\{m>R\}}m f_N(t,x)\,dx&=0,
 \label{eq:model-approximate-no-gelation}\\
 \lim_{R\to\infty}\sup_{0\leq t\leq T}
 \int_{\{m>R\}}m f(t,x)\,dx&=0.
 \label{eq:model-limit-no-gelation}
\end{align}
Thus no positive mass is lost at \(m=\infty\) at any finite time.
\end{theorem}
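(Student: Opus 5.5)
The proof follows the route outlined in the introduction. It has three parts: approximation, a uniform superlinear estimate, and passage to the limit with recovery of absolute continuity.

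\emph{Step 1 (approximation).} For each \(N\), replace \(a\) by the bounded continuous rate \(a_N=a\,\one_{\{m+m_1\le N\}}\), or a continuous truncation of it. Then \cite[Theorem~2.2]{LuoLiu2026} gives global nonnegative \(L^1\)-integral weak solutions \(f_N\). These conserve \(M_0\) and \(M_1\) exactly, and \(M_2(f_N(t))\le M_2(f_0)\). I would also use the following bounds, uniform in \(N\):
\begin{itemize}
\item the small-mass (no-shattering) estimate of \cite[Proposition~4.2, Lemma~8.5]{LuoLiu2026};
\item a collision-rate bound \(\int_X\abs{\mathbf Q(f_N,f_N)}\,dx\le C\), obtained from \(E^\gamma\le (m|v|^2+m_1|v_1|^2)^\gamma\), \eqref{eq:model-linear-upper}, and the physical moments.
\end{itemize}
Combining the small-mass estimate with conservation of \(M_0\) and \(M_1\), I get a bounded interval \([\delta,R_0]\) in mass. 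On every \([0,T]\), the particle number of \(f_N(t)\) in \(\{\delta\le m\le R_0\}\) is at least some \(n_T>0\), uniformly in \(N\). Chebyshev's inequality with the \(M_2\) bound then shows that a positive fraction of these particles also have \(|v|\le V_0\).

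\emph{Step 2 (the weight \(\Phi\)).} By de la Vallée-Poussin applied to the integrable tail \(m f_0\), choose \(q_k\ge0\) with \(\sum_k q_k\int(m-L_k)_+f_0<\infty\) and \(\sum_k q_k=\infty\). Arrange also \(q_{k+1}\le 2q_k\) and \(q_k\le q_{k+1}\), so that neighbouring coefficients are comparable. Set \(\Phi(m)=m+\sum_k q_k(m-L_k)_+\). This is convex and superlinear, and its initial moment is finite.

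For each hinge \(h_L(m)=(m-L)_+\), compute the \(g\)-average of \(\Delta h_L\) explicitly at fixed \(S\) and \(\theta\). By convexity and the fixed share law, this average is strictly negative, of size \(\gtrsim S\), when \(\theta\) is close to \(0\) or \(1\) and \(S\sim L\). It can be positive only when both incoming masses are comparable and large. Summing over \(k\):
\begin{itemize}
\item the positive part of \(\tfrac{d}{dt}\int\Phi f_N\) at scale \(L_j\) is bounded by \(C\kappa(L_j)L_jq_j\) times the interaction of the shell \(\{m\sim L_j\}\) with itself;
\item large--reservoir collisions give a negative term of size \(c\kappa(L_j)L_jq_j\) times the interaction of the shell with the reservoir.
\end{itemize}
The kernel comparisons \eqref{eq:model-intensity-comparison} and \eqref{eq:model-regular-variation} make these two prefactors comparable at every scale.

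\emph{Main obstacle.} The difficulty is comparing the two interaction integrals, given that the frequency is \(E^\gamma\). For a large particle of mass \(m\sim L_j\) and a reservoir partner, \(E\approx m_1|v-v_1|^2\). For two large particles, \(E\lesssim L_j|u|^2\), so the frequency ratio is controlled by \(L_j^\gamma\) modulo velocity weights. I would handle the velocities using \(E^\gamma\le (m|v|^2+m_1|v_1|^2)^\gamma\) together with the energy bound. The shell population satisfies \(\int_{m\sim L_j}f_N\le M_1/L_j\). Together these give
\[
\text{(large--large)}_j\lesssim L_j^{\gamma-1}\,\text{(large--bounded)}_j + C q_j\,\text{(lower-order)}.
\]
For \(j\ge j_0\) the factor \(L_j^{\gamma-1}\) is small enough that the positive term is absorbed by the negative one. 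The finitely many remaining scales contribute at most \(C(j_0)\). This yields \(\tfrac{d}{dt}\int\Phi f_N\le C_T\), and hence \eqref{eq:model-uniform-Phi-bound}. The estimate \eqref{eq:model-approximate-no-gelation} follows because \(\Phi(m)/m\to\infty\).

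\emph{Step 3 (limit).} The measures \(\mu_N(t)=f_N(t)\,dx\) are tight by the small- and large-mass controls. They are equicontinuous in time in the bounded-Lipschitz metric, by the uniform collision-rate bound. Prokhorov's theorem and Arzelà--Ascoli then give a narrowly continuous limit \(\mu_t\). The \(\Phi\)-bound lets \(M_1\) pass to the limit, which gives conservation of \(M_0\) and \(M_1\), \eqref{eq:model-limit-no-gelation}, and, by lower semicontinuity, \(M_2(\mu_t)\le M_2(f_0)\). For the collision term, localize to compact sets and use tightness of the collision measures; this identifies \(\mathcal Q(\mu,\mu)\). To show \(\mu_t\) has no singular part, test the equation on a common Lebesgue-null set carrying the singular parts. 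The gain term charges no mass there, so the singular particle number is nonincreasing. It vanishes initially, hence for all \(t\). This gives \(\mu_t=f(t)\,dx\) with \(\mathbf Q\in L^\infty(0,\infty;L^1)\) and the Bochner identity, hence \(f\in W^{1,\infty}(0,\infty;L^1(X))\).
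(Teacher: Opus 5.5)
Your overall architecture matches the paper's, but the central comparison does not close as you set it up.

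\textbf{The gap is in the ``main obstacle'' step.} You bound the large--large frequency through \(E^\gamma\le(m|v|^2+m_1|v_1|^2)^\gamma\) and the energy bound, which uses \emph{absolute} velocities. The large--bounded frequency of a large particle, \(\gtrsim\int_{\mathcal B_T}\abs{v-w}^{2\gamma}f_N(t,z)\,dz\), sees only \emph{relative} velocities. It can be arbitrarily small when the reservoir velocities sit near \(v\), since neither the \(M_2\) bound nor your localization \(|w|\le V_0\) forces any velocity spread. The remainder you call lower order is therefore unavoidable in your scheme. For large particles with \(|v|,|v_1|\lesssim V_0\) it is of order \(L_j^\gamma V_0^{2\gamma}n_j^2\), where \(n_j=\int_{\{m\sim L_j\}}f_N\le M_1(f_0)/L_j\). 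After the prefactor \(\kappa(L_j)L_jq_j\) it can be as large as \(L_j^{p+\gamma-1}q_j\) times the shell mass \(L_jn_j\). Take \(p>1-\gamma\), in particular the additive case \(p=1\) that the theorem is about. Then summing over \(j\ge j_0\) needs a \(q\)-weighted moment of order \(m^{p+\gamma}\), which neither the physical moments nor \(\int\Phi f_N\) controls. (For \(p<1-\gamma\) the whole large--large contribution is summable directly, and no absorption is needed.)

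The missing idea is a pointwise three-point inequality in relative velocities. Take \(m,m_1\in I_j=[\vartheta_0L_j,2L_j]\) and \emph{every} reservoir partner \(z=(\mu,w)\) with \(r_T\le\mu\le R_T\le\vartheta_0L_j\). Then
\(E(x,x_1)^\gamma\le(2L_j)^\gamma\abs{v-v_1}^{2\gamma}\),
\(E(x,z)^\gamma\ge(r_T/2)^\gamma\abs{v-w}^{2\gamma}\), and
\(\abs{v-v_1}^{2\gamma}\le2^{(2\gamma-1)_+}(\abs{v-w}^{2\gamma}+\abs{v_1-w}^{2\gamma})\).
Since \(\chi_N\) is nonincreasing in \(S\), this gives
\(\chi_N(m+m_1)E(x,x_1)^\gamma\le K_TL_j^\gamma[\chi_N(m+\mu)E(x,z)^\gamma+\chi_N(m_1+\mu)E(x_1,z)^\gamma]\).
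Average in \(z\) against \(f_N(t,z)/\eta_T\), integrate in \(x,x_1\), and factor out \(\int_{\{m_1\in I_j\}}f_N\le M_1(f_0)/(\vartheta_0L_j)\). This yields \((\text{large--large})_j\le K_TL_j^{\gamma-1}(\text{large--bounded})_j\), with no remainder and no velocity localization of the reservoir. The bounded overlap of the intervals \(I_j\) then lets one retained large--bounded negative term absorb all shells \(j\ge J\).

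Two further steps fail as written.

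First, the coefficients. A nondecreasing sequence (your \(q_k\le q_{k+1}\)) with \(\sum_kq_k=\infty\) gives \(\sum_kq_k(m-L_k)_+\gtrsim m\log m\) for large \(m\). So \(\int\Phi f_0<\infty\) would require \(\int m\log(1+m)f_0\,dx<\infty\), which is not assumed. With only a first moment, the coefficients must be allowed to decrease slowly: \(2^{-1/2}q_k\le q_{k+1}\le q_k\). The paper builds such \(q_k\) from unit-mass plateaux adapted to the tails \(\mathcal T_k\), followed by a geometric majorant. This two-sided ratio control is what yields \(D_r(S)\le CSq_{K(S)}\sqrt r\) and \(q_{K(S)}\asymp q_j\) for \(S\asymp L_j\).

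Second, the uniform collision rate. With \(a\lesssim1+S\), the bound \(E^\gamma\le(m|v|^2+m_1|v_1|^2)^\gamma\) leaves the term \(M_0\int m(m|v|^2)^\gamma f\,dx\), which \(M_0,M_1,M_2\) do not control. For example, add to a fixed bump of small particles a bump of number \(1/L\) at mass \(L\) and unit speed; this term is then of order \(L^\gamma\). The paper instead uses \(SE=mm_1\abs{v-v_1}^2\), whence \(\int_{X^2}SE\,h(x)h(x_1)\,dx_1\,dx=2(M_1M_2-\abs{P}^2)\), together with H\"older applied to \(SE^\gamma=S^{1-\gamma}(SE)^\gamma\).

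A minor correction to the singular-part argument: the gain \emph{does} charge the null set \(Z_T\), through \(\Gamma_i(\sigma_\tau,\sigma_\tau)\). The argument works because this singular--singular gain is dominated by the singular--singular loss, not because the gain vanishes there.
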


\begin{remark}[Physical meaning and representative models covered by
\textnormal{(RV)}]
\label{rem:model-physical-content}
\normalfont
The decomposition in \eqref{eq:model-rate-disintegration} separates the
dependence of the exchange rate on the incoming masses from the
distribution of the outgoing mass share.  The factor \(E^\gamma b\) in
the collision kernel gives the dependence of the collision frequency on
relative kinetic energy and scattering direction.  Since
\(\displaystyle\int_0^1g(\alpha)\,d\alpha=1\), the factor
\(\lambda(m,m_1)\) is the effective capacity of the mass-transfer
channels activated by that encounter, whereas \(g(\alpha)\,d\alpha\) gives the
conditional distribution of the fraction of \(S=m+m_1\) carried by one
outgoing particle. 

A fixed \(g\) is a self-similar redistribution law.  Once
masses are measured as fractions of \(S\), the outgoing-share
distribution is assumed to be independent of the absolute particle size
and of the incoming share \(m/S\).  Temporary joining, mixing, and
separation of two droplets provide one possible physical interpretation,
as discussed in the Introduction.

Regular variation of \(\kappa\) has also a simple geometric interpretation.
One may write
\begin{equation}
 \kappa(S)=S^pL(S),
 \qquad
 \frac{L(\zeta S)}{L(S)}\longrightarrow1
 \quad\text{for every }\zeta>0,
 \label{eq:model-kappa-physical-scaling}
\end{equation}
where the power \(S^p\) gives the leading growth of the active region
for mass-exchange and the slowly varying factor \(L\) allows smaller
corrections caused, for example, by finite-size geometry, porosity,
partial screening, or slowly changing accessibility.  More concretely,
suppose that a particle of radius \(r\) has mass \(m=c_fr^{d_f}\), while
its accessible part has size \(r^{d_a}\ell(r)\), where
\(0\leq d_a\leq d_f\) and \(\ell\) is slowly varying.  Then its exchange
intensity is regularly varying in the mass with index \(p=d_a/d_f\).
Thus \(p\) measures how the accessible part grows relative to the
mass-bearing part. For the pure-power laws below, the sum of the intensities of two
particles has the same scale as a function of their total mass because,
for \(0\leq p\leq1\),
\begin{equation}
 (m+m_1)^p
 \leq m^p+m_1^p
 \leq2^{1-p}(m+m_1)^p.
 \label{eq:model-pair-activity-comparison}
\end{equation}
When a slowly varying correction is present, it is included in the
total-size law \(\kappa(S)=S^pL(S)\), and the two-sided comparison between
\(\lambda(m,m_1)\) and \(\kappa(S)\) is imposed separately in
\eqref{eq:model-intensity-comparison}.  This comparison allows the
incoming mass ratio to change the intensity by a bounded factor without
changing its scale in the total mass.

The following four classes illustrate the physical range of
\textnormal{(RV)}; see \cref{fig:physical-activity-scalings}.
\begin{enumerate}[label=\textnormal{(\roman*)},leftmargin=2.4em]
 \item \emph{Boundary-controlled exchange.}
 For compact particles in \(d\) dimensions, constant density gives
 \(r(m)\asymp m^{1/d}\).  The geometric hard-particle cross-sectional
 factor used in the original BME derivation is
 \begin{equation}
  \lambda_{\mathrm{hs}}(m,m_1)
  =C_d\bigl(m^{1/d}+m_1^{1/d}\bigr)^{d-1}
  \asymp(m+m_1)^{(d-1)/d}.
  \label{eq:model-hard-particle-example}
 \end{equation}
 See \cite{DegondLiu2025}.  Hence it satisfies \textnormal{(RV)} with
 \(p=(d-1)/d\).  In three dimensions this is \(p=2/3\).  The same
 exponent is obtained when exchange is controlled by a boundary layer
 of fixed microscopic thickness, since the number of sites available for exchange
 is proportional to surface area.  This class describes compact
 droplets, grains, or clusters whose interiors do not take part directly
 in the exchange.

 \item \emph{Bulk-controlled exchange.}
 If collision-induced mixing reaches the whole particle, each
 elementary mass unit can contribute an exchange channel.  The number of active units is
 then proportional to \(m+m_1\), giving the bulk or additive law
 \begin{equation}
  \lambda_{\mathrm{bulk}}(m,m_1)\asymp m+m_1,
  \qquad p=1.
  \label{eq:model-bulk-example}
 \end{equation}
 Here the accessible part grows at the same rate as the total mass and this
 is the linear endpoint allowed by \textnormal{(RV)}.

 \item \emph{Exchange on a porous or fractal accessible set.}
 For a porous or fractal aggregate with mass dimension \(d_f\) and
 accessible-set dimension \(d_a\),
 \begin{equation}
  \lambda_{\mathrm{frac}}(m,m_1)
  \asymp m^{d_a/d_f}+m_1^{d_a/d_f}
  \asymp(m+m_1)^{d_a/d_f},
  \qquad p=\frac{d_a}{d_f}.
  \label{eq:model-fractal-example}
 \end{equation}
 Mass--radius laws and transport areas that depend on size are standard
 for fractal aerosol and colloidal aggregates
 \cite{FilippovZuritaRosner2000}.  If most of the
 aggregate is accessible, then \(d_a\) is close to \(d_f\) and \(p\) is
 close to one.  Strong shielding or limited pore access gives a smaller
 \(d_a\).  Gradual changes in accessibility may be included in the
 slowly varying factor \(L\), provided that the comparison in
 \eqref{eq:model-intensity-comparison} remains valid.

 \item \emph{Bounded or slowly growing exchange.}
 If only a bounded number of contact regions or slow transport paths can
 be used during one collision, the exchange intensity remains bounded
 at large sizes, which intersects the bounded-kernel setting of \cite{LuoLiu2026}.  A bounded law approaching a limit, such as
 \begin{equation}
  \kappa(S)=\kappa_\infty(1-e^{-S/S_{\rm sat}}),
  \label{eq:model-saturating-example}
 \end{equation}
 is regularly varying with \(p=0\).  The same index also includes rates
 that grow more slowly than every positive power, such as
 \(\kappa(S)=[\log(2+S)]^q\), \(q>0\).  Thus \(p=0\) includes both a
 bounded number of exchange regions and a number that grows very slowly
 with particle size.
\end{enumerate}

The four classes correspond to exchange controlled by a boundary, by
the whole mass, by a scale-dependent accessible part, and by a bounded
or slowly growing number of exchange regions.  Thus
\textnormal{(RV)} covers a broad range of physical scenarios within one assumption.
\end{remark}
\begin{figure}[t]
  \centering
  \resizebox{\textwidth}{!}{
    \input{rv_diagram.tikz}
  }
  \caption{Physical realizations of the four activity scalings under the hypothesis \textnormal{(RV)}.
  The pale-blue region is the mass-bearing body, coral marks the
  active mass-exchange sites or accessible interface, and navy marks
  screened interface when mass-exchange collisions happen.}
  \label{fig:physical-activity-scalings}
\end{figure}

\section{Mass-Cutoff Approximation Solutions}\label{sec:mass-cutoff-approximation}
\subsection{Existence} In this section, we truncate the mass-exchange rate \(a(m,m_1,\alpha)\) and retain the full
hard-potential collision kernel \(B(E,\xi)\), which places each approximate truncated equation
within the bounded mass-exchange rate theory of \cite[Theorem~2.2]{LuoLiu2026}.

\begin{lemma}[Joint mass--energy rate bound]
\label{lem:joint-rate}
Let \(h\geq0\) have finite \(M_0(h),M_1(h),M_2(h)\).  Then
\begin{align}
 \int_{X^2}S E^\gamma h(x)h(x_1)\,dx_1\,dx
 &\leq
 \bigl(2M_0(h)M_1(h)\bigr)^{1-\gamma}
 \bigl(2(M_1(h)M_2(h)-\abs{P(h)}^2)\bigr)^\gamma,
 \label{eq:weighted-rate}\\
 \int_{X^2}E^\gamma h(x)h(x_1)\,dx_1\,dx
 &\leq2^\gamma M_0(h)^{2-\gamma}M_2(h)^\gamma.
 \label{eq:unweighted-rate}
\end{align}
Consequently,
\begin{align}
 \int_{X^2}(1+S)E^\gamma h(x)h(x_1)\,dx_1\,dx
 &\leq
 \bigl(2M_0(h)M_1(h)\bigr)^{1-\gamma}
 \bigl(2(M_1(h)M_2(h)-\abs{P(h)}^2)\bigr)^\gamma
 \notag\\[-1mm]
 &\quad+2^\gamma M_0(h)^{2-\gamma}M_2(h)^\gamma.
 \label{eq:total-rate}
\end{align}
The same assertions hold when \(h(x)\,dx\) is replaced by a finite
nonnegative Borel measure with these moments.
\end{lemma}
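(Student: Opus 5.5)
The plan is to prove both estimates by Hölder's inequality with exponents \(1/(1-\gamma)\) and \(1/\gamma\) on the product measure \(h(x)h(x_1)\,dx_1\,dx\) on \(X^2\). Each time, the integrand is factored into a \((1-\gamma)\)-power and a \(\gamma\)-power of two quantities whose pair integrals reduce exactly to the physical moments \eqref{eq:model-physical-moments}. If some moment is infinite the bound is trivial, so I assume \(M_0,M_1,M_2<\infty\); the Cauchy--Schwarz bound \(\abs{P}^2\le M_1M_2\) then makes \(P\) finite.

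For \eqref{eq:weighted-rate}, the key algebraic observation is that, by \eqref{eq:model-reduced-energy},
\[
 S\,E(x,x_1)=m\,m_1\abs{v-v_1}^2 .
\]
So I write \(SE^\gamma=S^{1-\gamma}(SE)^\gamma=S^{1-\gamma}\bigl(mm_1\abs{v-v_1}^2\bigr)^\gamma\). Hölder then bounds the left side by
\[
 \Bigl(\int_{X^2}(m+m_1)h h\Bigr)^{1-\gamma}
 \Bigl(\int_{X^2}mm_1\abs{v-v_1}^2hh\Bigr)^{\gamma}.
\]
The first factor equals \(2M_0M_1\) by symmetry. For the second, expand \(\abs{v-v_1}^2=\abs v^2+\abs{v_1}^2-2v\cdot v_1\). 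This gives \(M_2M_1+M_1M_2-2\,P\cdot P=2(M_1M_2-\abs P^2)\). Every term is absolutely integrable: \(\abs{mm_1v\cdot v_1}\le \tfrac12(mm_1\abs v^2+mm_1\abs{v_1}^2)\) is controlled by \(M_1M_2\), so the expansion is legitimate.

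For \eqref{eq:unweighted-rate}, I use the bound \(E\le m\abs v^2+m_1\abs{v_1}^2\) from \eqref{eq:model-reduced-energy}. Writing \(E^\gamma=1^{1-\gamma}E^\gamma\), Hölder gives
\[
 \int E^\gamma hh\le (M_0^2)^{1-\gamma}\Bigl(\int(m\abs v^2+m_1\abs{v_1}^2)hh\Bigr)^\gamma=(M_0^2)^{1-\gamma}(2M_0M_2)^\gamma,
\]
which is \(2^\gamma M_0^{2-\gamma}M_2^\gamma\). Adding the two bounds gives \eqref{eq:total-rate}.

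None of these steps uses anything specific to densities, since Hölder and Fubini--Tonelli hold for any finite nonnegative Borel measure \(\mu\otimes\mu\). Hence the measure version follows verbatim. The only point requiring care, and the one I'd expect to be the main (mild) obstacle, is justifying the expansion of \(\int mm_1\abs{v-v_1}^2\,hh\) into moments, including the cross term \(P\cdot P\). This is handled by the integrability remark above.
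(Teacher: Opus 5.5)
Your proposal is correct and follows the paper's proof essentially step for step: the identity $SE=mm_1|v-v_1|^2$; H\"older with exponents $1/(1-\gamma)$ and $1/\gamma$ applied to $S^{1-\gamma}(SE)^\gamma$, and to $E^\gamma$ together with $E\le m|v|^2+m_1|v_1|^2$; and the exact evaluation $\int_{X^2} SE\,h\,h=2(M_1M_2-|P|^2)$. The only cosmetic difference is how you justify integrability of the cross term. You use the pointwise bound $|mm_1\,v\cdot v_1|\le\frac12 mm_1(|v|^2+|v_1|^2)$, whereas the paper uses Cauchy--Schwarz to get $\int_X m|v|\,h\,dx\le (M_1M_2)^{1/2}$; both work equally well.
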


\begin{proof}
By Cauchy--Schwarz inequality, \(\abs{P(h)}^2\leq M_1(h)M_2(h)\).  We claim that
\begin{equation}
 \begin{aligned}
 \int_{X^2}S h(x)h(x_1)\,dx_1\,dx
 &=2M_0(h)M_1(h),\\
 \int_{X^2}SE h(x)h(x_1)\,dx_1\,dx
 &=2\bigl(M_1(h)M_2(h)-\abs{P(h)}^2\bigr).
 \end{aligned}
 \label{eq:joint-rate-identities}
\end{equation}
Indeed, the first identity follows directly from $S=m+m_1$. Since
\(SE=mm_1|v-v_1|^2\), Cauchy--Schwarz gives
\begin{equation}
 \int_Xm\abs v\,h(x)\,dx
 \leq M_1(h)^{1/2}M_2(h)^{1/2}<\infty.
 \label{eq:momentum-first-moment}
\end{equation}
Consequently, the cross term is absolutely integrable, since
\begin{equation}
 \int_{X^2}mm_1\abs{v\cdot v_1}
 h(x)h(x_1)\,dx_1\,dx
 \leq
 \left(\int_Xm\abs v\,h(x)\,dx\right)^2<\infty.
\end{equation}
Fubini's theorem therefore gives
\begin{align*}
 \int_{X^2}SEh(x)h(x_1)
 &=\int_{X^2}mm_1
 \bigl(|v|^2+|v_1|^2-2v\cdot v_1\bigr)
 h(x)h(x_1)\,dx_1\,dx\\
 &=2M_1(h)M_2(h)-2|P(h)|^2,
\end{align*}
which proves the second. Using \(SE^\gamma=S^{1-\gamma}(SE)^\gamma\), H\"older's inequality on
\(X^2\) and \eqref{eq:joint-rate-identities} yield
\begin{equation*}
 \int_{X^2}SE^\gamma h(x)h(x_1)
 \leq\left(\int_{X^2}S h(x)h(x_1)\right)^{1-\gamma}
 \left(\int_{X^2}SEh(x)h(x_1)\right)^\gamma,
\end{equation*}
which is \eqref{eq:weighted-rate}.  Since
\(E\leq m|v|^2+m_1|v_1|^2\),  H\"older's
inequality gives
\begin{equation*}
 \int_{X^2}E^\gamma h(x)h(x_1)
 \leq M_0(h)^{2(1-\gamma)}
 \left(\int_{X^2}(m|v|^2+m_1|v_1|^2)h(x)h(x_1)\right)^\gamma\leq 2^\gamma M_0(h)^{2-\gamma}M_2(h)^\gamma,
\end{equation*}
which is \eqref{eq:unweighted-rate}.  Adding the two estimates proves
\eqref{eq:total-rate}.  For a finite nonnegative Borel measure, the
same Cauchy--Schwarz estimate as
\eqref{eq:momentum-first-moment} makes the signed cross term
absolutely integrable; Tonelli's theorem applies to the nonnegative
terms and Fubini's theorem to that cross term.  The preceding argument
therefore remains valid verbatim for such measures.
\end{proof}

Fix a nonincreasing function
\(\chi\in C([0,\infty);[0,1])\) such that
\begin{equation}\label{eq:chi-properties}
 \chi=1\quad\hbox{on }[0,1],
 \qquad
 \chi=0\quad\hbox{on }[2,\infty),
\end{equation}
and set
\begin{equation}
 \chi_N(S)=\chi(S/N),
 \qquad
 a_N(m,m_1,\alpha)=\chi_N(m+m_1)a(m,m_1,\alpha).
 \label{eq:mass-cutoff}
\end{equation}
For every fixed \(N\), \(a_N\) is continuous,
nonnegative, and satisfies
\begin{equation}
 a_N(m,m_1,\alpha)
 =a_N(m_1,m,\alpha)
 =a_N(m,m_1,1-\alpha).
 \label{eq:cutoff-symmetries}
\end{equation}
Moreover, \(\chi_N(S)\neq0\) implies \(S<2N\), and hence
\begin{equation}
 \norm{a_N}_{L^\infty((0,\infty)^2\times(0,1))}
 \leq a_+(1+2N)<\infty.
 \label{eq:cutoff-boundedness}
\end{equation}
Finally,
\begin{equation}
 0\leq a_N(m,m_1,\alpha)\leq a(m,m_1,\alpha)
 \leq a_+(1+m+m_1).
 \label{eq:cutoff-domination}
\end{equation}

\begin{proposition}[Global solutions of the truncated equations]
\label{prop:truncated-solutions}
Let \(\mathbf Q_N\) be the collision operator obtained from
\eqref{eq:model-static-collision-form} by replacing \(a\) with
\(a_N\). For every \(N\geq1\), the Cauchy problem
\begin{equation}\label{eq:truncated-Cauchy}
 \partial_t f_N=\mathbf Q_N(f_N,f_N),
 \qquad
 f_N(0)=f_0,
\end{equation}
has a global nonnegative \(L^1\)-integral weak solution
\begin{equation}
 f_N\in W^{1,\infty}(0,\infty;L^1(X))
 \cap L^\infty
 (0,\infty;L^1(X;(1+m+m|v|^2)\,dx)).
 \label{eq:truncated-solution-class}
\end{equation}
For all \(t\geq0\),
\begin{equation}\label{eq:truncated-moments}
 M_0(f_N(t))=M_0(f_0),
 \qquad
 M_1(f_N(t))=M_1(f_0),
 \qquad
 M_2(f_N(t))\leq M_2(f_0).
\end{equation}
For every bounded Borel test function \(\psi\), the weak identity
\begin{align}
 \int_X\psi(x)f_N(t,x)\,dx
 -\int_X\psi(x)f_0(x)\,dx
 =\int_0^t\mathcal Q_N(f_N(s),f_N(s))[\psi]\,ds
 \label{eq:truncated-weak-identity}
\end{align}
holds, where
\begin{align}
 \mathcal Q_N(h,h)[\psi]
 =\frac12\int_{X^2}\int_0^1\int_{\Sph}
 &a_N(m,m_1,\alpha)E^\gamma
 b\left(\frac{u}{\abs u}\cdot\omega\right)
 \Delta\psi\,d\omega\,d\alpha\,
 h(x)h(x_1)\,dx_1\,dx.
 \label{eq:truncated-collision-form}
\end{align}
\end{proposition}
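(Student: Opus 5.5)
The plan is to reduce the statement to the bounded-rate Cauchy theory of \cite[Theorem~2.2]{LuoLiu2026}. That theorem gives global nonnegative \(L^1\)-integral weak solutions with conserved particle number and mass and nonincreasing energy, for bounded continuous symmetric mass-exchange rates, the Grad-cutoff hard-potential kernel, and initial data in \(L^1(X;(1+m+m|v|^2)\,dx)\). So the first step is to check that \(a_N\) is admissible. Continuity holds because \(\chi_N\) is continuous and \(a=\lambda g\) is a product of continuous functions. Nonnegativity is immediate. The symmetries \eqref{eq:cutoff-symmetries} follow from \(\lambda(m,m_1)=\lambda(m_1,m)\), \(g(\alpha)=g(1-\alpha)\), and the fact that \(\chi_N\) depends only on \(S\). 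Boundedness is \eqref{eq:cutoff-boundedness}. The collision kernel \(B=E^\gamma b\) is unchanged and satisfies \eqref{eq:model-hard-potential}--\eqref{eq:model-angular-cutoff}, and \(f_0\) satisfies \eqref{eq:model-initial-data}. Invoking the cited theorem for each fixed \(N\) then yields \(f_N\) together with \eqref{eq:truncated-moments}.

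Next I will match the solution class and the weak formulation with the ones stated here. The \(L^\infty(0,\infty;L^1((1+m+m|v|^2)dx))\) bound follows from \eqref{eq:truncated-moments}, since \(M_0\) and \(M_1\) are constant and \(M_2\) is nonincreasing. For the \(W^{1,\infty}(0,\infty;L^1)\) regularity, I will show that \(\mathbf Q_N(f_N,f_N)\in L^\infty(0,\infty;L^1(X))\). The total variation of the collision form is bounded by a gain part plus a loss part. Each part is at most
\[
 \norm{a_N}_\infty\norm b_{L^1(\Sph)}\int_{X^2}E^\gamma f_N f_N\,dx_1\,dx,
\]
up to a factor \(2\), after using the pre/post-collisional change of variables to handle the gain part. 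By \eqref{eq:unweighted-rate} of \cref{lem:joint-rate}, this quantity is bounded by \(2^\gamma M_0(f_0)^{2-\gamma}M_2(f_0)^\gamma\), uniformly in \(t\). The Bochner identity then gives Lipschitz continuity in time with values in \(L^1\). The weak identity \eqref{eq:truncated-weak-identity} for bounded Borel \(\psi\) follows from the Bochner identity by pairing with \(\psi\in L^\infty\), exactly as in the equivalence noted in \cref{def:model-integral-solution}. Here \(\Delta\psi\) is bounded by \(4\norm\psi_\infty\), so \(\mathcal Q_N\) converges absolutely by the same rate bound.

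The main obstacle is making sure the hypotheses of \cite[Theorem~2.2]{LuoLiu2026} cover the full hard-potential factor \(E^\gamma\) without any velocity truncation. It is also possible that the cited result only provides energy nonincrease or local-in-time bounds. If the cited theorem requires something extra, such as strict positivity or a specific continuity class, I will verify it directly: \(a_N\) is continuous on \([0,\infty)^2\times[0,1]\) after extension by continuity in \(\alpha\), and it vanishes for \(S\geq 2N\). Globality is not an issue here, because the boundedness of \(a_N\) together with the uniform rate bound gives a time-independent Lipschitz constant, so any local solution extends.
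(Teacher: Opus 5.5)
Your proposal is correct and is essentially the paper's proof. The paper only checks, via \eqref{eq:cutoff-symmetries}--\eqref{eq:cutoff-boundedness}, that $a_N$ is bounded, continuous, nonnegative and has both collision symmetries, and then reads off every conclusion (solution class, moment laws, bounded-Borel weak identity) directly from \cite[Theorem~2.2]{LuoLiu2026}. Your extra verifications are consistent but redundant; in particular, the $L^1$ bound on $\mathbf Q_N(f_N,f_N)$ needs no pre/post-collisional change of variables, since duality with $\norm{\psi}_\infty\leq1$ and $\abs{\Delta\psi}\leq4$ already gives it, exactly as in \cref{lem:uniform-rate-time}.
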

\begin{proof}
By \eqref{eq:cutoff-symmetries}--\eqref{eq:cutoff-boundedness},
\(a_N\) is bounded, continuous, nonnegative, and has both collision
symmetries required in \cite[Theorem~2.2]{LuoLiu2026}.  That theorem
therefore gives all the stated conclusions.
\end{proof}

The following uniform estimate is needed in
\cref{sec:passage-limit}.  Set
\begin{align}
 C_{\mathrm{rate}}
 :=a_+\norm b_{L^1}
 [
 (2M_0(f_0)M_1(f_0))^{1-\gamma}
 (2M_1(f_0)M_2(f_0))^\gamma
 +2^\gamma M_0(f_0)^{2-\gamma}M_2(f_0)^\gamma
 ].
 \label{eq:Crate-definition}
\end{align}

\begin{lemma}[Uniform collision rate and time regularity]
\label{lem:uniform-rate-time}
For every \(N\geq1\) and \(s,t\geq0\),
\begin{align}
 &\int_{X^2}\int_0^1\int_{\Sph}
 a_N(m,m_1,\alpha)E^\gamma
 b\left(\frac{u}{\abs u}\cdot\omega\right)
 f_N(t,x)f_N(t,x_1)\,d\omega\,d\alpha\,dx_1\,dx
 \leq C_{\mathrm{rate}},
 \label{eq:uniform-collision-rate}\\
 &\norm{\mathbf Q_N(f_N(t),f_N(t))}_{L^1(X)}
 \leq2C_{\mathrm{rate}},
 \qquad
 \norm{f_N(t)-f_N(s)}_{L^1(X)}
 \leq2C_{\mathrm{rate}}\abs{t-s}.
 \label{eq:uniform-time-Lipschitz}
\end{align}
\end{lemma}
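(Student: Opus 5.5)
The plan is to integrate out the angular and share variables, dominate the truncated rate by the linear bound \eqref{eq:cutoff-domination}, and then apply \cref{lem:joint-rate} with the uniform moment information \eqref{eq:truncated-moments}.

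\textbf{Collision rate.} For fixed \(x,x_1\), the angular integral is
\[
\int_{\Sph} b\bigl(\tfrac{u}{\abs u}\cdot\omega\bigr)\,d\omega=\norm b_{L^1}
\]
by \eqref{eq:model-angular-cutoff}. This holds for every unit vector \(u/\abs u\), including the convention \eqref{eq:zero-relative-velocity-convention}. For the share variable, \eqref{eq:cutoff-domination} gives \(\int_0^1a_N\,d\alpha\leq a_+(1+S)\). Tonelli's theorem therefore bounds the left side of \eqref{eq:uniform-collision-rate} by
\[
a_+\norm b_{L^1}\int_{X^2}(1+S)E^\gamma f_N(t,x)f_N(t,x_1)\,dx_1\,dx .
\]
Now apply \eqref{eq:total-rate} with \(h=f_N(t)\). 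Drop \(-\abs{P}^2\leq0\). Both terms are monotone nondecreasing in \(M_0,M_1,M_2\), so \eqref{eq:truncated-moments} lets us replace the moments by \(M_0(f_0),M_1(f_0),M_2(f_0)\). This gives exactly \(C_{\mathrm{rate}}\) in \eqref{eq:Crate-definition}. The estimate holds for every \(t\), because the moment laws hold for all \(t\).

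\textbf{Bound on \(\mathbf Q_N\).} I would estimate the \(L^1\) norm by duality:
\[
\norm{\mathbf Q_N(f_N,f_N)}_{L^1}=\sup\bigl\{\mathcal Q_N(f_N,f_N)[\psi]:\norm\psi_\infty\leq1\bigr\}.
\]
This uses the representation of \(\mathbf Q_N\) against bounded test functions from \cref{prop:truncated-solutions}, i.e. \cite[Theorem~2.2]{LuoLiu2026}. For \(\abs\psi\leq1\) we have \(\abs{\Delta\psi}\leq4\). Together with the prefactor \(\tfrac12\) in \eqref{eq:truncated-collision-form}, this yields \(\abs{\mathcal Q_N[\psi]}\leq2C_{\mathrm{rate}}\).

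A cleaner route avoids the duality step. Write \(\mathbf Q_N=G-L\) with gain \(G\) and loss \(L\). The loss has \(\norm{L}_{L^1}\) equal to the full collision rate, which is at most \(C_{\mathrm{rate}}\). The gain has the same total mass, since testing with \(\psi\equiv1\) gives \(\Delta\psi=0\). Hence the total is at most \(2C_{\mathrm{rate}}\).

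The only delicate point is whether \(\mathbf Q_N(f_N(t),f_N(t))\) is defined for every \(t\) or only for a.e. \(t\). I would handle this by using the \(W^{1,\infty}\) representative, or by noting that the pointwise-in-\(t\) formula for the collision density makes sense at each \(t\) because the rate bound above is finite.

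\textbf{Time Lipschitz bound.} By \eqref{eq:truncated-weak-identity}, equivalently the Bochner identity from \cref{prop:truncated-solutions},
\[
f_N(t)-f_N(s)=\int_s^t\mathbf Q_N(f_N(\tau),f_N(\tau))\,d\tau .
\]
Hence \(\norm{f_N(t)-f_N(s)}_{L^1}\leq2C_{\mathrm{rate}}\abs{t-s}\).

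If only the weak identity is at hand, I would instead take the supremum over \(\norm\psi_\infty\leq1\) of \(\int_s^t\mathcal Q_N[\psi]\,d\tau\), which gives the same bound. I expect none of these steps to be a real obstacle; the main care is in bookkeeping the factor \(2\) and the measurability and Tonelli justifications.
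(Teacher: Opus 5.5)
Your proposal is correct and follows the paper's proof essentially step by step. You dominate \(a_N\) by \(a_+(1+S)\), integrate out \(\alpha,\omega\), apply \cref{lem:joint-rate} with the moment laws \eqref{eq:truncated-moments}, bound \(\mathbf Q_N\) by duality using \(\abs{\Delta\psi}\leq4\) and the prefactor \(\tfrac12\), and integrate the equation in \(L^1(X)\); your gain--loss alternative and your remark on defining \(\mathbf Q_N(f_N(t),f_N(t))\) at every \(t\) are valid additions that the paper does not spell out.
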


\begin{proof}
By \eqref{eq:cutoff-domination}, integration in \(\alpha\) and
\(\omega\), \cref{lem:joint-rate} and
\eqref{eq:truncated-moments} show that
\begin{align*}
   &\int_{X^2}\int_0^1\int_{\Sph}
 a_N(m,m_1,\alpha)E^\gamma
 b\left(\frac{u}{\abs u}\cdot\omega\right)
 f_N(t,x)f_N(t,x_1)\,d\omega\,d\alpha\,dx_1\,dx\\&\qquad \leq a_+\norm b_{L^1}
 \int_{X^2}(1+S)E^\gamma f_N(t,x)f_N(t,x_1)\,dx_1\,dx
 \leq C_{\mathrm{rate}}.
\end{align*}

For \(\norm\psi_\infty\leq1\), one has
\(\abs{\Delta\psi}\leq4\). The factor \(1/2\) in
\eqref{eq:truncated-collision-form} and
\eqref{eq:uniform-collision-rate} yield
\(\norm{\mathbf Q_N(f_N,f_N)}_{L^1}\leq2C_{\mathrm{rate}}\).
Integrating \eqref{eq:truncated-Cauchy} in \(L^1(X)\) between \(s\)
and \(t\) proves the last estimate.
\end{proof}

For later mass tests, if \(\varphi:(0,\infty)\to\R\) is bounded and
Borel, set
\begin{align}
 \mathcal A_\varphi(m,m_1)
 =\int_0^1a(m,m_1,\alpha)
 \bigl[&\varphi(\alpha S)+\varphi((1-\alpha)S)-\varphi(m)-\varphi(m_1)\bigr]\,d\alpha.
 \label{eq:A-varphi}
\end{align}
Since a mass-only test is independent of \(\omega\),
\eqref{eq:truncated-weak-identity} implies
\begin{align}
 &\int_X\varphi(m)f_N(t,x)\,dx
 -\int_X\varphi(m)f_N(s,x)\,dx
 \notag\\
 &\quad=\frac{1}{2}\norm{b}_{L^1}\int_s^t\int_{X^2}
 \chi_N(S)E^\gamma\mathcal A_\varphi(m,m_1)
 f_N(\tau,x)f_N(\tau,x_1)\,dx_1\,dx\,d\tau.
 \label{eq:mass-test-identity}
\end{align}

\subsection{Uniform bounded-mass reservoir}The no-gelation mechanism needs a uniformly positive number of
particles in a fixed bounded mass interval.  Conservation of total
mass controls the number at large mass, while the next estimate rules
out accumulation at \(m=0\).  Only the pointwise upper bound
\eqref{eq:model-linear-upper} is used in the proof.

For \(r>0\), define
\begin{equation}\label{eq:FNr-definition}
 F_N(r,t)=\int_{\{m<r\}}f_N(t,m,v)\,dm\,dv.
\end{equation}
The small-mass bootstrap in
\cite[Lemma~8.5]{LuoLiu2026} uses only the linear growth bound for the
mass-exchange rate, the bounded-Borel mass-test identity, and uniform
control of \(M_0\) and \(M_2\).  In view of
\eqref{eq:model-linear-upper}, \eqref{eq:mass-test-identity}, and
\eqref{eq:truncated-moments}, it applies to the present approximation
family and gives the following statement.

\begin{proposition}[Uniform exclusion of the zero-mass boundary]
\label{prop:small-mass}
There is a constant \(C\), independent of \(N,r,\rho,s,t\), such that
for \(0<r<\rho<1\) and \(0\leq s\leq t\),
\begin{equation}
 F_N(r,t)-F_N(r,s)
 \leq C\int_s^tF_N(\rho,\tau)^{2-\gamma}\,d\tau
 {}+C(t-s)r(1+\rho^{-1}).
 \label{eq:small-mass-bootstrap}
\end{equation}
Consequently, for every \(T>0\),
\begin{equation}\label{eq:small-mass-uniform}
 \lim_{r\downarrow0}
 \sup_{N\geq1}\sup_{0\leq t\leq T}F_N(r,t)=0.
\end{equation}
\end{proposition}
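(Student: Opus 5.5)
We prove \eqref{eq:small-mass-bootstrap} by testing the mass-test identity \eqref{eq:mass-test-identity} with a smoothed indicator of $\{m<r\}$. We then derive \eqref{eq:small-mass-uniform} from it by an ODE-type comparison.

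\textbf{Choice of test function.} We use $\varphi=\one_{(0,r)}$ directly, since bounded Borel tests are allowed. The increment $\varphi(\alpha S)+\varphi((1-\alpha)S)-\varphi(m)-\varphi(m_1)$ is positive only if at least one outgoing mass is below $r$. We estimate $\mathcal A_\varphi$ from above by keeping only the gain term:
\[
\mathcal A_\varphi(m,m_1)\le \int_0^1 a\,\bigl[\one_{\alpha S<r}+\one_{(1-\alpha)S<r}\bigr]d\alpha
\le 2a_+\norm g_{L^\infty}(1+S)\min\{1,2r/S\},
\]
where we used \eqref{eq:model-linear-upper} and $|\{\alpha:\alpha S<r\}|\le r/S$.

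\textbf{Splitting according to $\rho$.} We split the pair integral in \eqref{eq:mass-test-identity} into two regions.

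\emph{Both masses below $\rho$.} Then $S<2\rho<2$, so $\mathcal A_\varphi\lesssim 1$. The hard-potential factor is controlled by Hölder's inequality as in \cref{lem:joint-rate}, restricted to $\{m<\rho\}$:
\[
\int_{m,m_1<\rho}E^\gamma f_Nf_N\le 2^\gamma F_N(\rho)^{2-\gamma}M_2^\gamma .
\]
This produces the term $C\int_s^t F_N(\rho,\tau)^{2-\gamma}\,d\tau$.

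\emph{At least one mass at least $\rho$.} Then $S\ge\rho$, so $\mathcal A_\varphi\lesssim (1+S)r/S\le r(1+\rho^{-1})$. We bound $\int E^\gamma f_Nf_N\le 2^\gamma M_0^{2-\gamma}M_2^\gamma$ using \eqref{eq:unweighted-rate} and \eqref{eq:truncated-moments}. This yields the term $C(t-s)r(1+\rho^{-1})$.

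All constants depend only on $M_0(f_0)$, $M_2(f_0)$, $a_+$, $\norm b_{L^1}$ and $\norm g_\infty$. In particular they are independent of $N$ because $\chi_N\le1$.

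\textbf{Deriving \eqref{eq:small-mass-uniform}; this is the main obstacle.} The bootstrap inequality controls $F_N(r)$ by $F_N(\rho)$ at a larger radius $\rho$, so the argument must go down through scales. We proceed as follows.

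\begin{itemize}
\item Set $\varepsilon(r)=\sup_N\sup_{t\le T}F_N(r,t)$, and note that $F_N(r,0)=F_0(r)\to0$ as $r\to0$.
\item Apply \eqref{eq:small-mass-bootstrap} with $s=0$. Since $F_N\le M_0$, we get
\[
\varepsilon(r)\le F_0(r)+CT\,\varepsilon(\rho)^{1-\gamma}\varepsilon(\rho)+CTr(1+\rho^{-1}).
\]
\item This alone is not contractive. We therefore first localize in time: on short intervals of length $\delta$ with $C\delta M_0^{1-\gamma}\le1/4$, the coefficient of $F_N(\rho)$ is at most $1/4$.
\item Iterate along the dyadic radii $\rho_k=2^{-k}$, choosing $r=\rho_{k+n}$ with $\rho=\rho_k$. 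This gives
\[
\varepsilon_\delta(\rho_{k+n})\le F_0(\rho_{k+n})+\tfrac14\varepsilon_\delta(\rho_k)+C2^{-n}.
\]
Taking $\limsup$ as $k\to\infty$ and using $\varepsilon_\delta\le M_0$ yields $\limsup\varepsilon_\delta\le\tfrac43C2^{-n}$ for every $n$. Hence $\varepsilon_\delta(r)\to0$.
\item Restart at times $\delta,2\delta,\dots$, each time with the new initial smallness, which is uniform in $N$ by the previous step. Finitely many steps cover $[0,T]$.
\end{itemize}

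The delicate point is ensuring that the restarted initial smallness is uniform in $N$. This holds because the previous step gives exactly a uniform modulus of $r$.
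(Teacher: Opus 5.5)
Your proposal is correct and follows the paper's proof: the same indicator test with the gain-only bound on $\mathcal A_\varphi$, the same two-region split (you split by $\max\{m,m_1\}<\rho$ rather than $S<\rho$, which changes nothing), and the same short-time bootstrap restarted finitely many times using the uniform-in-$N$ smallness at each new starting time. The only cosmetic difference is how you close the bootstrap: you linearize via $F_N^{2-\gamma}\le M_0(f_0)^{1-\gamma}F_N$ and use dyadic radii with ratio $2^{-n}$, whereas the paper takes $\rho=\sqrt r$ and concludes from $L_I\le C\tau L_I^{2-\gamma}$.
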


\begin{proof}
This adapts the small-mass argument of
\cite[Lemma~8.5]{LuoLiu2026} to the present mass cutoff.  We
give all details because the cited lemma is stated there inside a
higher-moment construction.

Since  \(\varphi(m)=\one_{\{m<r\}}\) is bounded and Borel, \eqref{eq:mass-test-identity} gives
\begin{align}
 F_N(r,t)-F_N(r,s)
 &=\frac{1}{2}\norm{b}_{L^1}\int_s^t\int_{X^2}
 \chi_N(S)E^\gamma\mathcal A_\varphi(m,m_1)
 f_N(\tau,x)f_N(\tau,x_1)\,dx_1\,dx\,d\tau,
 \label{eq:small-mass-identity}
\end{align}
where
\begin{equation}
 \mathcal A_\varphi(m,m_1)
 \leq\int_0^1a(m,m_1,\alpha)
 \bigl[\one_{\{\alpha S<r\}}+\one_{\{(1-\alpha)S<r\}}\bigr]\,d\alpha.
\end{equation}

\noindent\underline{\textbf{Step 1: the part with \(S<\rho\).}} On \(\{S<\rho\}\), both incoming masses are below \(\rho\) and
\(a\leq2a_+\).  Since \(0<\gamma<1\),
\(E^\gamma\leq(m|v|^2)^\gamma+(m_1|v_1|^2)^\gamma\).
H\"older's inequality yields
\begin{equation}
 \int_{\{m<\rho\}}(m|v|^2)^\gamma f_N(\tau,x)\,dx
 \leq M_2(f_0)^\gamma F_N(\rho,\tau)^{1-\gamma}.
\end{equation}
Since \(\{S<\rho\}\subset\{m<\rho,\ m_1<\rho\}\), it follows that
\begin{align*}
 &\int_{\{S<\rho\}}E^\gamma
 f_N(\tau,x)f_N(\tau,x_1)\,dx_1\,dx\leq
 2F_N(\rho,\tau)
 \int_{\{m<\rho\}}(m|v|^2)^\gamma f_N(\tau,x)\,dx\leq
 2M_2(f_0)^\gamma F_N(\rho,\tau)^{2-\gamma}.
\end{align*}
After integration in \(\alpha\) and \(\omega\), the contribution of
\(\{S<\rho\}\) is therefore bounded by
\(C F_N(\rho,\tau)^{2-\gamma}\).

\noindent\underline{\textbf{Step 2: the part with \(S\geq\rho\).}}
For a fixed \(S\), the two sets
\(\{\alpha:\alpha S<r\}\) and
\(\{\alpha:(1-\alpha)S<r\}\) have total length at most \(2r/S\).
Consequently, \eqref{eq:model-linear-upper} gives
\begin{align*}
 \int_0^1a(m,m_1,\alpha)
 \bigl[\one_{\{\alpha S<r\}}+\one_{\{(1-\alpha)S<r\}}\bigr]\,d\alpha\leq2a_+r\frac{1+S}{S}
 \leq2a_+r(1+\rho^{-1}).
\end{align*}
Hence the contribution of \(\{S\geq\rho\}\) is bounded by
\begin{align*}
 &Ca_+r(1+\rho^{-1})
 \int_{X^2}E^\gamma
 f_N(\tau,x)f_N(\tau,x_1)\,dx_1\,dx\leq
 Cr(1+\rho^{-1})
 M_0(f_0)^{2-\gamma}M_2(f_0)^\gamma
 \leq Cr(1+\rho^{-1}),
\end{align*}
where \(C\) is independent of \(N,r,\rho,\tau\).

\noindent\underline{\textbf{Step 3: the short-time bootstrap.}}
Take \(\rho=\sqrt r\).  Let
\(I=[t_0,t_0+\tau]\subset[0,T]\) and suppose
\(\displaystyle\lim_{r\downarrow0}\sup_NF_N(r,t_0)=0\).  Define
\begin{equation}
 L_I=\limsup_{r\downarrow0}\sup_{N\geq1}\sup_{t\in I}F_N(r,t).
\end{equation}
Using \eqref{eq:small-mass-bootstrap} with \(s=t_0\),
\(\rho=\sqrt r\), and then taking the supremum over
\(N\geq1\) and \(t\in I\), we obtain
\begin{align}
 \sup_{N\geq1}\sup_{t\in I}F_N(r,t)
 &\leq \sup_{N\geq1}F_N(r,t_0)
 +C\tau
 \left(\sup_{N\geq1}\sup_{t\in I}
 F_N(\sqrt r,t)\right)^{2-\gamma}
 +C\tau r(1+r^{-1/2}).
 \label{eq:small-mass-limsup-preparation}
\end{align}
The first term on the right tends to zero by the assumed starting
property, while \(r(1+r^{-1/2})\to0\).  Since \(\sqrt r\downarrow0\)
as \(r\downarrow0\), taking the limsup in
\eqref{eq:small-mass-limsup-preparation} gives
\(L_I\leq C\tau L_I^{2-\gamma}\).  If \(M_0(f_0)=0\), all solutions
vanish.  Otherwise \(0\leq L_I\leq M_0(f_0)\); choose \(\tau>0\) so
that \(C\tau M_0(f_0)^{1-\gamma}<1\).  If \(L_I>0\), division by
\(L_I\) would give
\(1\leq C\tau L_I^{1-\gamma}<1\), a contradiction.  Thus \(L_I=0\).
At \(t_0=0\), the starting property follows from \(f_0\in L^1(X)\).
The same \(\tau\) works at every later left endpoint, so finitely many
iterations cover \([0,T]\) and prove
\eqref{eq:small-mass-uniform}.
\end{proof}

The preceding exclusion of \(m=0\), together with conservation of
particle number and total mass, now supplies collision partners in a
fixed bounded mass interval at every time.

\begin{corollary}[A uniformly populated bounded-mass set]
\label{cor:bounded-mass-set}
Assume \(M_0(f_0)>0\).  For every \(T>0\), there exist
\(0<r_T<R_T<\infty\) and \(\eta_T>0\), independent of \(N\), such
that, with
\begin{equation}\label{eq:BT-definition}
 \mathcal B_T=\{(m,v)\in X:r_T\leq m\leq R_T\},
\end{equation}
one has
\begin{equation}\label{eq:bounded-mass-set}
 \inf_{N\geq1}\inf_{0\leq t\leq T}
 \int_{\mathcal B_T}f_N(t,x)\,dx\geq\eta_T.
\end{equation}
\end{corollary}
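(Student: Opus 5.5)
The plan is to split the particle number into three pieces and bound the two tails uniformly. For every \(N\) and \(t\), conservation \eqref{eq:truncated-moments} gives \(M_0(f_N(t))=M_0(f_0)=:M_0>0\) and \(M_1(f_N(t))=M_1(f_0)=:M_1\). For \(0<r<R\) we write
\[
 \int_{\{r\leq m\leq R\}}f_N(t,x)\,dx
 =M_0-F_N(r,t)-\int_{\{m>R\}}f_N(t,x)\,dx .
\]
The task is then to make each of the two subtracted terms smaller than \(M_0/4\), uniformly in \(N\geq1\) and \(t\in[0,T]\).

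The large-mass tail is handled by Chebyshev's inequality together with mass conservation:
\[
 \int_{\{m>R\}}f_N(t,x)\,dx\leq \frac1R\int_X m f_N(t,x)\,dx=\frac{M_1}{R}.
\]
This bound holds for every \(N\) and every \(t\geq0\). We choose \(R_T:=\max\{4M_1/M_0,\,1\}\), so that this term is at most \(M_0/4\). In fact \(R_T\) does not depend on \(T\).

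The small-mass part is where the only real input enters, namely \cref{prop:small-mass}. By \eqref{eq:small-mass-uniform}, there is \(r_T\in(0,1)\) with \(\sup_{N\geq1}\sup_{0\leq t\leq T}F_N(r_T,t)\leq M_0/4\). Shrinking \(r_T\) if necessary, we also get \(r_T<R_T\). The inclusion \(\{m<r_T\}\) in the definition of \(F_N\) matches the complement of \(\{m\geq r_T\}\). Combining the two bounds gives
\[
 \int_{\mathcal B_T}f_N(t,x)\,dx\geq M_0-\frac{M_0}{4}-\frac{M_0}{4}=\frac{M_0}{2}=:\eta_T,
\]
uniformly in \(N\geq1\) and \(t\in[0,T]\). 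This is \eqref{eq:bounded-mass-set}.

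Once \cref{prop:small-mass} is available there is no genuine obstacle. The only point needing care is that the small-mass exclusion be uniform in both \(N\) and \(t\in[0,T]\), and \eqref{eq:small-mass-uniform} provides exactly this. That uniformity is why \(r_T\), and hence the reservoir \(\mathcal B_T\), may depend on \(T\), while the upper cutoff \(R_T\) and the lower bound \(\eta_T\) come only from the conserved quantities.
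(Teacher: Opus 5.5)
Your proof is correct and follows the paper's argument essentially verbatim. It uses the same three-way split of $M_0(f_0)$, the choice of $r_T\in(0,1)$ from \eqref{eq:small-mass-uniform} with $F_N(r_T,\cdot)\le M_0(f_0)/4$, the Chebyshev bound $M_1(f_0)/R_T\le M_0(f_0)/4$, and $\eta_T=M_0(f_0)/2$. The only cosmetic difference is that you take $R_T=\max\{4M_1(f_0)/M_0(f_0),1\}$ rather than $\max\{2r_T,4M_1(f_0)/M_0(f_0)\}$; since $r_T<1\le R_T$, you do not actually need to shrink $r_T$ further.
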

\begin{proof}
 By
\eqref{eq:small-mass-uniform}, choose \(r_T\in(0,1)\) so that
\begin{equation*}
 \sup_{N\geq1}\sup_{0\leq t\leq T}
 \int_{\{m<r_T\}}f_N(t,x)\,dx\leq\frac14M_0(f_0).
\end{equation*}
Take \(R_T=\max\{2r_T,4M_1(f_0)/M_0(f_0)\}\).  Mass conservation gives
\begin{equation*}
 \sup_{N\geq 1}\sup_{0\leq t\leq T}\int_{\{m>R_T\}}f_N(t,x)\,dx
 \leq\frac{M_1(f_0)}{R_T}
 \leq\frac14M_0(f_0),
\end{equation*}
and particle-number conservation yields
\begin{equation*}
 \int_{\mathcal B_T}f_N(t,x)\,dx
 \geq M_0(f_0)-\frac14M_0(f_0)-\frac14M_0(f_0)=\frac12M_0(f_0)  .
\end{equation*}
Thus \eqref{eq:bounded-mass-set} holds with
\(\eta_T=M_0(f_0)/2\).
\end{proof}

\section{Uniform No Gelation Mass Estimate}\label{sec:no-gelation}
For each cutoff parameter \(N\geq1\), let
\(f_N=f_N(t,x)\geq0\) denote the corresponding approximate
solution with initial datum \(f_0\).  Fix \(T>0\).  Our aim is to establish the uniform tightness of the mass measure associated with the truncated solutions. 

\begin{proposition}[No Gelation Mass Estimate]\label{prop:no-gelation}
  For every \(T>0\),
  \begin{equation}
    \lim_{R\to\infty}\sup_{N\geq1}\sup_{0\leq t\leq T}\int_{\{m>R\}}m f_N(t,x) dx=0.\label{eq:no-gelation}
  \end{equation}
\end{proposition}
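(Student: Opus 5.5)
Starting from the initial mass tail, we construct the dyadic-hinge weight \(\Phi(m)=m+\sum_{k\ge0}q_k(m-L_k)_+\), \(L_k=2^kL_0\), and prove \(\sup_N\sup_{t\le T}\int\Phi(m)f_N\,dx<\infty\). Since \(\Phi(m)/m\to\infty\), this gives \eqref{eq:no-gelation} at once: \(\int_{\{m>R\}}mf_N\le \sup_{m>R}(m/\Phi(m))\cdot C_T\).

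\textbf{Choice of the coefficients.} By de la Vallée-Poussin applied to the finite measure \(m f_0\,dx\), there is a nondecreasing sequence \(q_k\to\infty\) with \(\sum_k q_k\int_{\{m>L_k\}}mf_0\,dx<\infty\). We will also require \(q_{k+1}\le 2q_k\) (slow growth), so that neighboring coefficients are comparable and \(\Phi'(m)\asymp \sum_{L_k<m}q_k\) is doubling. This gives \(\int\Phi f_0<\infty\). The hinges are truncated at a finite level \(K\), giving a function \(\Phi_K\) that is linear at infinity, so the mass test identity \eqref{eq:mass-test-identity} applies after an approximation by bounded tests. The bounds must be uniform in \(K\).

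\textbf{Signed increment of a hinge.} For \(\varphi=(m-L)_+\), compute \(\mathcal A_\varphi(m,m_1)=\lambda\int_0^1 g(\alpha)\,[(\alpha S-L)_++((1-\alpha)S-L)_+]\,d\alpha-\lambda[(m-L)_++(m_1-L)_+]\). When \(m\ge L\ge m_1\) with \(m_1\) small, the incoming term is \(m-L\approx S-L\). The outgoing average is at most \(S-L-c(g)\min(S,L)\)-type smaller, because splitting at a symmetric share loses a definite fraction. Hence, for a large particle paired with \(x_1\in\mathcal B_T\), the summed increment \(\sum_k q_k\mathcal A_{(\cdot-L_k)_+}\) is \(\le -c\,\kappa(S)\sum_{L_k\lesssim S}q_kL_k\approx -c\,\kappa(L_j)L_jq_j\) at scale \(m\sim L_j\); this uses the doubling of \(q_k\) and regular variation of \(\kappa\). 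Positive contributions occur only when both masses are \(\gtrsim\) a fixed fraction of \(S\) (comparable large pair), and they are bounded by \(C\kappa(L_j)L_jq_j\) at total scale \(L_j\). Unequal large pairs have nonpositive increments up to controllable terms, by convexity and the symmetry of \(g\).

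\textbf{Comparison, which is the main obstacle.} We bound the production \(\sum_j C\kappa(L_j)L_jq_j\int\!\!\int_{m,m_1\sim L_j}E^\gamma f_Nf_N\) by the dissipation \(\sum_j c\kappa(L_j)L_jq_j\int_{m\sim L_j}\int_{\mathcal B_T}E^\gamma f_Nf_N\). The difficulty is that \(E^\gamma\) depends on velocities: in the dissipation term \(E\asymp m_1|v-v_1|^2\) with \(m_1\in[r_T,R_T]\), while in the production term \(E\sim L_j|u|^2\). We plan to bound the production via \(E^\gamma\le (m|v|^2)^\gamma+(m_1|v_1|^2)^\gamma\) together with the shell population bound \(\int_{m\sim L_j}f_N\le M_1/L_j\), which gives a factor \(L_j^{\gamma}\cdot L_j^{-1}\) relative to a one-particle shell quantity. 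For the dissipation, we bound from below \(\int_{\mathcal B_T}|v-v_1|^{2\gamma}f_N(x_1)dx_1\ge c_T(1+|v|^{2\gamma})\) using \(\eta_T\) from \cref{cor:bounded-mass-set} and the bounded energy (tightness in \(v_1\)). Then production\(_j\le C L_j^{\gamma-1}\)dissipation\(_j+\) lower-order terms. For \(j\ge j_0\) the factor \(CL_j^{\gamma-1}\) is \(<1\), so these scales are absorbed. Finitely many low scales contribute at most \(C\) times the bounded rate \(C_{\mathrm{rate}}\) of \cref{lem:uniform-rate-time}. The remaining velocity cross terms are handled the same way, possibly with an extra Hölder step on \(M_2\). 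This yields \(\frac{d}{dt}\int\Phi_Kf_N\le C_T\) uniformly in \(N,K\); integrating over \([0,T]\), letting \(K\to\infty\) by monotone convergence, and then applying the tail argument above completes the proof.
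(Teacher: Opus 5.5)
Your overall architecture is the paper's: dyadic hinges, a signed increment that is negative for unequal pairs, the reservoir \(\mathcal B_T\) of \cref{cor:bounded-mass-set}, and absorption of large comparable pairs through the factor \(L_j^{\gamma-1}\). Two steps, however, fail as written.

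\textbf{The coefficients.} A nondecreasing sequence \(q_k\to\infty\) with \(\sum_k q_k\mathcal T_k<\infty\), where \(\mathcal T_k=\int_{\{m>L_k\}}mf_0\,dx\), does not exist under \(M_1(f_0)<\infty\) alone. Already \(q_k\equiv q_0\) requires \(\sum_k\mathcal T_k=\int m\,\#\{k:L_k<m\}\,f_0\,dx<\infty\), which is an \(m\log m\) moment. Equivalently, your \(\Phi\) satisfies \(\Phi(m)\ge\frac{q_0}{2}\,m\,\#\{k:L_k\le m/2\}\gtrsim m\log(m/L_0)\), so \(\int\Phi f_0=\infty\) for admissible data with \(\int m\log m\,f_0\,dx=\infty\).

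De la Vall\'ee-Poussin gives divergence of the partial sums \(\sum_{L_k<m}q_k\), which is what superlinearity means. The hinge coefficients are increments of these sums and in general must tend to zero. Once \(q_k\) decreases, your condition \(q_{k+1}\le2q_k\) is vacuous. What is needed is the opposite bound \(q_{k+1}\ge c\,q_k\) with \(c>1/2\). It gives \(q_{K-n}\le 2^{n/2}q_K\) and hence \(D_r(S)\le CSq_{K(S)}\sqrt r\) in \cref{lem:Dr-bounds}. That estimate is what makes every pair with \(\vartheta\le\vartheta_0\) strictly dissipative with a constant uniform in \(S\). Convexity and symmetry of \(g\) alone do not give this: a sharp drop in the \(q_k\) produces unequal large pairs with positive increment.

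Producing a nonincreasing sequence with \(q_{k+1}\ge2^{-1/2}q_k\), \(\sum_kq_k=\infty\) and \(\sum_kq_k\mathcal T_k<\infty\) at the same time is exactly \cref{lem:q-coefficients}. The construction uses unit-mass plateaux on blocks where the tail is at most \(2^{-(j+1)}\), followed by the geometric majorant. This is where ``no higher mass moment'' is won, so it cannot be replaced by a citation.

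\textbf{The velocity comparison.} Bounding \(E^\gamma\) by the absolute energies \((m|v|^2)^\gamma+(m_1|v_1|^2)^\gamma\) destroys Galilean invariance. If all velocities cluster near a common \(v_0\neq0\), the dissipation is nearly zero. Your production bound, however, stays of order \(\kappa(L_j)q_jL_j^{1+\gamma}n_j^2|v_0|^{2\gamma}\), with \(n_j=\int_{\{m\sim L_j\}}f_N\,dx\).

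Your lower bound \(\int_{\mathcal B_T}|v-w|^{2\gamma}f_N(t,z)\,dz\ge c_T(1+|v|^{2\gamma})\) is precisely a non-concentration statement for the reservoir velocities, uniform in \(N\) and \(t\). Neither \(\eta_T\) nor tightness with bounded energy implies it. If you instead split \(|v|^{2\gamma}\lesssim|v-w|^{2\gamma}+|w|^{2\gamma}\), you are left with a remainder of order \(\kappa(L_j)q_jL_j^{1+\gamma}n_j^2\). For \(n_j\approx M_1(f_0)/L_j\) this is of size \(\kappa(L_j)q_jL_j^{\gamma-1}\). When \(p+\gamma>1\) (for instance \(p=1\)) it is unbounded in \(j\), because \(\sum_jq_j=\infty\) forbids geometric decay of \(q_j\). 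So it is not a lower-order term.

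The paper keeps relative velocities throughout. For each reservoir point \(z=(\mu,w)\) it combines four facts:
\begin{itemize}
\item the three-point inequality \(|v-v_1|^{2\gamma}\le2^{(2\gamma-1)_+}(|v-w|^{2\gamma}+|v_1-w|^{2\gamma})\);
\item the bound \(E(x,x_1)\le2L_j|v-v_1|^2\) on the shell;
\item the lower bound \(m\mu/(m+\mu)\ge r_T/2\);
\item the monotonicity of \(\chi_N\), which handles the cutoff your plan omits.
\end{itemize}
Together these give pointwise \(\chi_N(m+m_1)E(x,x_1)^\gamma\le K_TL_j^\gamma[\chi_N(m+\mu)E(x,z)^\gamma+\chi_N(m_1+\mu)E(x_1,z)^\gamma]\). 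Averaging in \(z\) against \(f_N(t,z)/\eta_T\) and using \(\int_{\{m\in I_j\}}f_N\,dx\le M_1(f_0)/(\vartheta_0L_j)\) yields \eqref{eq:large-pair-comparison}. This has no remainder and uses nothing about the reservoir's velocity law beyond its population. Summing over \(j\ge J\) then requires only the bounded overlap \eqref{eq:comparison-interval-overlap} of the intervals \(I_j\).
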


The proof assigns a family of tail-adapted convex tokens to each
particle, giving a propagated weighted moment.  The first subsection computes their exact signed change
under redistribution, while the second compares the frequencies of the
collisions which can create tokens with those of collisions which
destroy them.

\subsection{Tail-adapted tokens and loss of splitting}\label{subsec:tokens}

Choose \(L_0\geq1\) so large that
\begin{equation}\label{eq:initial-tail-small}
 \mathcal T_0:=\int_{\{m>L_0\}}m f_0(x)\,dx\leq\frac12,
\end{equation}
and use dyadic levels
\begin{equation}\label{eq:levels-tails}
 L_k=2^kL_0,
 \qquad
 \mathcal T_k=\int_{\{m>L_k\}}m f_0(x)\,dx,
 \qquad k\geq0.
\end{equation}
Since \(L_{k+1}=2L_k>L_k\),
\begin{equation}
 \one_{\{m>L_{k+1}\}}\leq\one_{\{m>L_k\}},
 \qquad
 \one_{\{m>L_k\}}\longrightarrow0
 \quad \text{ as }k\to\infty
 \label{eq:initial-tail-indicators}
\end{equation}
for every \(m>0\).  Moreover,
\begin{equation}
 0\leq m f_0(x)\one_{\{m>L_k\}}
 \leq m f_0(x),
 \qquad
 \int_Xm f_0(x)\,dx=M_1(f_0)<\infty.
 \label{eq:initial-tail-dominating-function}
\end{equation}
Thus monotone set inclusion and dominated convergence give
\begin{equation}
 0\leq\mathcal T_{k+1}\leq\mathcal T_k,
 \qquad
 \lim_{k\to\infty}\mathcal T_k=0.
 \label{eq:initial-tail-monotone-limit}
\end{equation}

\begin{lemma}[Coefficients adapted to the initial mass tail]
\label{lem:q-coefficients}
There is a positive sequence \(\{q_k\}_{k\geq0}\) such that
\begin{equation}\label{eq:q-properties}
 2^{-1/2}q_k\leq q_{k+1}\leq q_k,
 \qquad
 \sum_{k=0}^\infty q_k=\infty,
 \qquad
 \sum_{k=0}^\infty q_k\mathcal T_k<\infty.
\end{equation}
\end{lemma}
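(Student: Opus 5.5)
The plan is to build first a crude nonincreasing step sequence \(p_k\) with \(\sum_k p_k=\infty\) and \(\sum_k p_kT_k<\infty\), and then regularize it by a geometric envelope so that neighbouring coefficients become comparable. The only inputs are \eqref{eq:initial-tail-monotone-limit}: \(\mathcal T_k\) is nonincreasing and tends to \(0\). No summability of \(\mathcal T_k\) is available, so we must let \(q_k\) decay only where the tail is already small.

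\emph{Step 1 (blocks).} We choose integers \(0=n_0<n_1<n_2<\cdots\) recursively. Given \(n_{j-1}<n_j\), pick \(n_{j+1}\) so large that two conditions hold:
\[
 \ell_j:=n_{j+1}-n_j\ \geq\ \ell_{j-1},
 \qquad
 \mathcal T_{n_{j+1}}\leq 2^{-(j+1)} .
\]
This is possible since \(\mathcal T_k\to0\). We set \(\ell_{-1}:=1\) and use \(\mathcal T_0\leq\frac12\) at \(j=0\). Define \(p_k=1/\ell_j\) for \(n_j\leq k<n_{j+1}\). Then \(p_k>0\), and \(p_k\) is nonincreasing because \(\ell_j\) is nondecreasing. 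Each block contributes exactly \(1\) to \(\sum_k p_k\), so \(\sum_kp_k=\infty\). Since \(\mathcal T_k\) is nonincreasing, each block also satisfies
\[
 \sum_{n_j\leq k<n_{j+1}}p_k\mathcal T_k\leq \ell_j^{-1}\,\ell_j\,\mathcal T_{n_j}\leq 2^{-j}.
\]
Hence \(\sum_kp_k\mathcal T_k\leq2\).

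\emph{Step 2 (geometric envelope).} The sequence \(p_k\) may drop abruptly between blocks, so it violates the left inequality in \eqref{eq:q-properties}. Define
\[
 q_k:=\max_{0\leq i\leq k}2^{-(k-i)/2}p_i,
 \qquad\text{equivalently}\quad
 q_0=p_0,\quad q_{k+1}=\max\{p_{k+1},\,2^{-1/2}q_k\}.
\]
Since \(p_{k+1}\leq p_k\leq q_k\), the recursion gives \(2^{-1/2}q_k\leq q_{k+1}\leq q_k\). Moreover \(q_k\geq p_k>0\), so \(\sum_kq_k=\infty\). For the weighted sum, bound the maximum by the sum and use monotonicity \(\mathcal T_k\leq\mathcal T_i\) for \(k\geq i\):
\[
 \sum_kq_k\mathcal T_k\leq\sum_{i}p_i\sum_{k\geq i}2^{-(k-i)/2}\mathcal T_k
 \leq\frac{1}{1-2^{-1/2}}\sum_ip_i\mathcal T_i<\infty .
\]
This proves \eqref{eq:q-properties}.

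The only delicate point is the one handled in Step 1. A naive choice such as \(q_k=1/(k+1)\), or blocks of prescribed length, fails when \(\mathcal T_k\) decays arbitrarily slowly. The fix is to make each block's \(q\)-mass equal to one while starting the block only after the tail has fallen below \(2^{-j}\). The nondecreasing-length requirement is what makes \(p\) monotone, and that monotonicity is exactly what the envelope recursion in Step 2 uses.
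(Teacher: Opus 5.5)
Your proposal is correct and follows the paper's proof essentially verbatim: unit-mass plateaux $1/\ell_j$ on blocks of nondecreasing length, each starting where the initial tail has dropped below a geometric threshold, then the geometric majorant $q_k=\max_{0\le i\le k}2^{-(k-i)/2}p_i$, and the same Tonelli-plus-monotonicity bound $\sum_k q_k\mathcal T_k\le (1-2^{-1/2})^{-1}\sum_i p_i\mathcal T_i$. The only difference is cosmetic: your threshold is $2^{-j}$, where the paper uses $2^{-(j+1)}$.
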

\begin{proof}
Set \(c=2^{-1/2}\). We first construct a decreasing sequence of
unit-mass plateaux and then replace its downward jumps by geometric
transitions. See \cref{fig:q-coefficient-construction} for a schematic illustration.

Choose integers
\(
 0=n_0<n_1<n_2<\cdots
\)
such that
\begin{equation}\label{eq:q-block-indices}
 \mathcal T_{n_j}\leq 2^{-(j+1)},
 \qquad
 d_j:=n_{j+1}-n_j
 \quad\text{is nondecreasing in }j.
\end{equation}
Such a choice is possible because \(\mathcal T_k\to0\). First take
\(n_0=0\), and choose \(n_1>n_0\) sufficiently large that
\(\mathcal T_{n_1}\leq2^{-2}\).  After
\(n_{j-1}<n_j\) have been chosen for some \(j\geq1\), one may take
\(n_{j+1}\) sufficiently
large that
\(
 \mathcal T_{n_{j+1}}\leq2^{-(j+2)},\) and \(
 n_{j+1}-n_j\geq n_j-n_{j-1}.
\)

On the block
\(
 B_j:=\{n_j,n_j+1,\ldots,n_{j+1}-1\},
\)
define the auxiliary coefficients
\begin{equation}\label{eq:q-rough-plateaux}
 \widehat q_k=1/{d_j},
 \qquad k\in B_j.
\end{equation}
Because \(d_j\) is nondecreasing, \(\widehat q_k\) is positive and
nonincreasing.  Moreover, every plateau carries exactly 
\(
 \sum_{k\in B_j}\widehat q_k=1.
\)
Hence
\begin{equation}\label{eq:q-rough-properties}
 \sum_{k=0}^{\infty}\widehat q_k=\infty,
 \qquad
 \sum_{k=0}^{\infty}\widehat q_k\mathcal T_k
 \leq\sum_{j=0}^{\infty}\mathcal T_{n_j}
 \leq\sum_{j=0}^{\infty}2^{-(j+1)}<\infty.
\end{equation}

We now smooth the downward jumps of \(\widehat q_k\) by setting
\begin{equation}\label{eq:q-geometric-majorant}
 q_k:=\max_{0\leq i\leq k}c^{\,k-i}\widehat q_i.
\end{equation}
Equivalently,
\(
 q_0=\widehat q_0, q_{k+1}=\max\{\widehat q_{k+1},cq_k\}.
\)
Since
\(\widehat q_{k+1}\leq\widehat q_k\leq q_k\), this recursion gives
\begin{equation*}
 cq_k\leq q_{k+1}\leq q_k.
\end{equation*}
Furthermore, \(q_k\geq\widehat q_k\), so
\begin{equation*}
 \sum_{k=0}^{\infty}q_k
 \geq\sum_{k=0}^{\infty}\widehat q_k
 =\infty.
\end{equation*}

Finally, Tonelli's theorem and the
monotonicity of \(\mathcal T_k\) imply
\begin{align*}
 \sum_{k=0}^{\infty}q_k\mathcal T_k
 &\leq
 \sum_{k=0}^{\infty}\sum_{i=0}^{k}
 c^{\,k-i}\widehat q_i\mathcal T_k =\sum_{i=0}^{\infty}\widehat q_i
   \sum_{r=0}^{\infty}c^r\mathcal T_{i+r}\leq
 \frac1{1-c}
 \sum_{i=0}^{\infty}\widehat q_i\mathcal T_i
 <\infty.
\end{align*}
This proves all the assertions.
\end{proof}
\begin{figure}[t]
\centering
\begin{tikzpicture}[
  x=.48cm,y=.86cm,
  axis/.style={-{Stealth[length=2mm]},thin},
  guide/.style={gray!55,densely dotted,thin},
  raw/.style={line width=1.5pt,dashed},
  smooth/.style={qred,line width=1.7pt,line join=round},
  every node/.style={font=\small}
]
  % Colored blocks
  \fill[stageblue!10]   (.7,.6) rectangle (5.3,6.55);
  \fill[stageteal!10]   (5.7,.6) rectangle (11.3,6.55);
  \fill[stageorange!11] (11.7,.6) rectangle (17.3,6.55);
  \fill[stagepurple!10] (17.7,.6) rectangle (24.3,6.55);

  % Axes and block boundaries
  \draw[axis] (.5,.75) -- (24.8,.75) node[right] {$k$};
  \draw[axis] (.5,.75) -- (.5,7.55);
  \node[rotate=90] at (-.05,4.15) {coefficient};

  \foreach \x in {5.5,11.5,17.5}
    \draw[guide] (\x,.75) -- (\x,6.55);

  \node[stageblue!70!black]   at (3,6.72) {$B_j$};
  \node[stageteal!70!black]   at (8.5,6.72) {$B_{j+1}$};
  \node[stageorange!80!black] at (14.5,6.72) {$B_{j+2}$};
  \node[stagepurple!75!black] at (21,6.72) {$B_{j+3}$};

  % Raw unit-mass plateaux
  \draw[stageblue,raw]   (1,5.75) -- (5,5.75);
  \draw[stageteal,raw]   (6,4.75) -- (11,4.75);
  \draw[stageorange,raw] (12,3.45) -- (17,3.45);
  \draw[stagepurple,raw] (18,2.25) -- (24,2.25);

  \node[stageblue!70!black,above] at (3,5.75)
    {$\widehat q_k=d_j^{-1}$};
  \node[stageteal!70!black,above] at (8.5,4.75)
    {$\widehat q_k=d_{j+1}^{-1}$};
  \node[stageorange!80!black,above] at (14.5,3.45)
    {$\widehat q_k=d_{j+2}^{-1}$};
  \node[stagepurple!75!black,above] at (21,2.25)
    {$\widehat q_k=d_{j+3}^{-1}$};

  % Geometric majorant
  \draw[smooth] plot coordinates {
    (1,5.75) (2,5.75) (3,5.75) (4,5.75) (5,5.75)
    (6,5.32) (7,4.98) (8,4.75) (9,4.75) (10,4.75) (11,4.75)
    (12,4.18) (13,3.75) (14,3.45) (15,3.45) (16,3.45) (17,3.45)
    (18,3.00) (19,2.62) (20,2.25) (21,2.25)
    (22,2.25) (23,2.25) (24,2.25)
  };

  % Legend
  \draw[qred,line width=1.7pt] (2,7.45) -- (3,7.45);
  \node[anchor=west] at (3.15,7.45)
    {$q_k=\max_{0\leq i\leq k}
      2^{-(k-i)/2}\widehat q_i$};

  \draw[stageblue,raw] (16.6,7.45) -- (17.6,7.45);
  \node[anchor=west] at (17.75,7.45)
    {raw plateaux \(\widehat q_k\)};

  % Block properties
  \node[align=center,font=\footnotesize] at (3,3.85)
    {$\displaystyle\sum_{k\in B_j}\widehat q_k=1$\\[-1mm]
     \(\mathcal T_k\leq2^{-(j+1)}\)};

  \node[align=center,font=\footnotesize] at (8.5,2.75)
    {$\displaystyle\sum_{k\in B_{j+1}}\widehat q_k=1$\\[-1mm]
     \(\mathcal T_k\leq2^{-(j+2)}\)};

  \node[align=center,font=\footnotesize] at (14.5,1.72)
    {$\displaystyle\sum_{k\in B_{j+2}}\widehat q_k=1$\\[-1mm]
     \(\mathcal T_k\leq2^{-(j+3)}\)};

  \node[align=center,font=\footnotesize] at (21,1.52)
    {$\displaystyle\sum_{k\in B_{j+3}}\widehat q_k=1$\\[-1mm]
     \(\mathcal T_k\leq2^{-(j+4)}\)};

  % Transition annotation
  \node[qred,align=center,font=\footnotesize]
    (bridge) at (14.4,5.45)
    {geometric bridge\\
     \(q_{k+1}\geq2^{-1/2}q_k\)};
  \draw[qred,-{Stealth[length=1.6mm]},thin]
    (bridge.south west) -- (12.15,4.18);

  % Block lengths
  \draw[decorate,decoration={brace,mirror,amplitude=4pt}]
    (1,.50) -- (5,.50)
    node[midway,below=5pt] {$d_j=n_{j+1}-n_j$};

  \draw[decorate,decoration={brace,mirror,amplitude=4pt}]
    (6,.50) -- (11,.50)
    node[midway,below=5pt] {$d_{j+1}$};

  \draw[decorate,decoration={brace,mirror,amplitude=4pt}]
    (12,.50) -- (17,.50)
    node[midway,below=5pt] {$d_{j+2}$};

  \draw[decorate,decoration={brace,mirror,amplitude=4pt}]
    (18,.50) -- (24,.50)
    node[midway,below=5pt] {$d_{j+3}$};
\end{tikzpicture}
\caption{Tail-adapted block coefficients and their geometric majorant.
The colored dashed plateaux form the auxiliary sequence
\(\widehat q_k\).  Every block
\(B_j=[n_j,n_{j+1})\cap\mathbb N_0\) has
\(\widehat q\)-mass one, while the initial tail on that block is at
most \(2^{-(j+1)}\).  The red sequence \(q_k\) replaces each downward
jump by an admissible geometric transition.  }
\label{fig:q-coefficient-construction}
\end{figure}

For each level \(L_k\), the quantity \((m-L_k)_+\) measures the excess
mass above \(L_k\), which we regard as the mass token carried at that
level.  Weighting these levelwise tokens by the coefficients constructed
in \cref{lem:q-coefficients}, we define the tail-adapted mass-token weight
\begin{equation}\label{eq:Phi-definition}
   \Phi(m):=m+\sum_{k=0}^{\infty}q_k(m-L_k)_+.
\end{equation}
Since \(L_k\to\infty\), for each fixed \(m>0\) the sum contains only
finitely many nonzero terms.  Hence \(\Phi\) is well defined.

\begin{lemma}[Properties of the tail-adapted mass-token weight]
\label{lem:Phi-properties}
The function \(\Phi\) is finite, continuous, nonnegative, increasing,
and convex.  Moreover,
\begin{equation}\label{eq:Phi-initial}
 \int_X\Phi(m)f_0(x)\,dx<\infty,
\end{equation}
the quotient \(m\mapsto\Phi(m)/m\) is nondecreasing, and
\begin{equation}\label{eq:Phi-superlinear}
 \lim_{m\to\infty}\frac{\Phi(m)}m=\infty.
\end{equation}
\end{lemma}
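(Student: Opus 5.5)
The plan is to derive every property from the structure of \(\Phi\) as a nonnegative combination of hinges, together with the three properties of \(\{q_k\}\) in \cref{lem:q-coefficients}.

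\emph{Finiteness, continuity, monotonicity and convexity.} For fixed \(m\), only the indices with \(L_k<m\) contribute, and there are finitely many of them. On each bounded interval \([0,M]\), \(\Phi\) therefore coincides with the finite sum \(m+\sum_{L_k<M}q_k(m-L_k)_+\). Each summand is continuous, nonnegative, nondecreasing and convex, because \(q_k>0\). The term \(m\) makes the sum strictly increasing. Since these properties are local, they pass to \(\Phi\) on all of \([0,\infty)\).

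\emph{Monotonicity of \(\Phi(m)/m\).} Each hinge quotient \((m-L_k)_+/m=(1-L_k/m)_+\) is nondecreasing in \(m\), so \(\Phi(m)/m=1+\sum_k q_k(1-L_k/m)_+\) is nondecreasing as well.

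\emph{Superlinearity.} Fix \(K\). For \(m\geq 2L_K\), every \(k\leq K\) satisfies \(L_k\leq L_K\leq m/2\), hence \((1-L_k/m)_+\geq 1/2\). This gives
\[
\frac{\Phi(m)}{m}\geq 1+\frac12\sum_{k=0}^K q_k .
\]
Since \(\sum_k q_k=\infty\), letting \(K\to\infty\) yields \eqref{eq:Phi-superlinear}.

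\emph{Finiteness of the initial moment.} Use the pointwise bound \((m-L_k)_+\leq m\one_{\{m>L_k\}}\). By Tonelli,
\[
\int_X\Phi(m)f_0\,dx\leq M_1(f_0)+\sum_k q_k\int_{\{m>L_k\}}mf_0\,dx=M_1(f_0)+\sum_k q_k\mathcal T_k .
\]
The right-hand side is finite by \eqref{eq:q-properties} and \eqref{eq:model-initial-data}.

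\emph{Main obstacle.} Nothing here is a serious obstacle; the only step needing care is superlinearity. It has to use the divergence \(\sum q_k=\infty\) and not the decay rate of \(q_k\), and the factor \(1/2\) obtained from taking \(m\geq 2L_K\) handles this cleanly.
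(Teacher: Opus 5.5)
Your proposal is correct and matches the paper's proof. The steps are the same: local finiteness of the hinge series on each \([0,M]\) for the regularity properties; the representation \(\Phi(m)/m=1+\sum_k q_k(1-L_k/m)_+\) for monotonicity of the quotient; the bound \(\Phi(m)/m\geq 1+\frac12\sum_{k\leq K}q_k\) for \(m\geq 2L_K\), combined with \(\sum_k q_k=\infty\), for superlinearity; and Tonelli with \((m-L_k)_+\leq m\one_{\{m>L_k\}}\) and \(\sum_k q_k\mathcal T_k<\infty\) for the initial moment. The differences are cosmetic: the order of the steps, and fixing \(K\) and letting \(K\to\infty\) rather than fixing a target level \(A\).
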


\begin{proof}
Since \(L_k\to\infty\), the series defining \(\Phi\) is locally
finite.  Indeed, for any \(M>0\) and \(m\in[0,M]\), all terms with
\(L_k\geq M\) vanish, so that
\begin{equation*}
 \Phi(m)=m+\sum_{\{k:L_k<M\}}q_k(m-L_k)_+.
\end{equation*}
This is a finite sum of continuous, nonnegative, increasing, and convex
functions.  Since \(M>0\) is arbitrary, \(\Phi\) has these properties
on \([0,\infty)\); in particular, it is finite everywhere.

Since \(f_0\geq0\), Tonelli's theorem and the definition of
\(\mathcal T_k\) give
\begin{equation}
 \int_X\Phi(m)f_0(x)\,dx
 =M_1(f_0)
   +\sum_{k=0}^{\infty}q_k\int_X(m-L_k)_+f_0(x)\,dx
 \leq M_1(f_0)+\sum_{k=0}^{\infty}q_k\mathcal T_k<\infty.
\end{equation}
This proves \eqref{eq:Phi-initial}.

For \(m>0\), we may write
\begin{equation}
 \frac{\Phi(m)}m
 =1+\sum_{k=0}^{\infty}q_k
      \left(1-\frac{L_k}{m}\right)_+.
\end{equation}
For every \(k\), the function
\(m\mapsto(1-L_k/m)_+\) is nondecreasing on \((0,\infty)\).
Hence \(m\mapsto\Phi(m)/m\) is nondecreasing as well. Finally, let \(A>1\).  Since
\(\sum_{k=0}^{\infty}q_k=\infty\), there exists \(J\) such that
\(\sum_{k=0}^{J}q_k\geq2(A-1)\).  If \(m\geq2L_J\), then
\(L_k/m\leq1/2\) for every \(k\leq J\), and consequently
\begin{equation}
 \frac{\Phi(m)}m
 \geq1+\sum_{k=0}^{J}q_k
       \left(1-\frac{L_k}{m}\right)
 \geq1+\frac12\sum_{k=0}^{J}q_k
 \geq A.
\end{equation}
Thus, for every \(A>1\), \(\Phi(m)/m\) is at least \(A\)
whenever \(m\geq2L_J\).  This proves
\eqref{eq:Phi-superlinear}.
\end{proof}

For \(S>0\) and \(r\in(0,1)\), define the token loss under the split
\(S\mapsto(rS,(1-r)S)\) by
\begin{equation}\label{eq:Dr-definition}
 D_r(S)=\Phi(S)-\Phi(rS)-\Phi((1-r)S).
\end{equation}
\begin{lemma}[Exact loss of one hinge]
\label{lem:one-hinge}
For \(r\in(0,1)\) and \(z\geq0\), set
\begin{equation}\label{eq:dr-profile}
 d_r(z)=
 \begin{cases}
  \min\{r,1-r,z,1-z\},&0\leq z\leq1,\\
  0,&z>1.
 \end{cases}
\end{equation}
Then, for all \(L,S>0\),
\begin{align}
 (S-L)_+-(rS-L)_+-((1-r)S-L)_+=Sd_r\left(\frac LS\right).
 \label{eq:one-hinge-identity}
\end{align}
Consequently,
\begin{equation}\label{eq:Dr-exact}
 D_r(S)=S\sum_{\{k:L_k<S\}}q_k
 d_r\left(\frac{L_k}{S}\right)\geq0.
\end{equation}
\end{lemma}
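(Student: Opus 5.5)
The plan is to reduce to the scale-free case by homogeneity and then do a case analysis on the position of the hinge. Since \((\cdot)_+\) is positively homogeneous of degree one, dividing \eqref{eq:one-hinge-identity} by \(S>0\) and writing \(z=L/S>0\) reduces the claim to
\[
 h_r(z):=(1-z)_+-(r-z)_+-(1-r-z)_+=d_r(z),\qquad z>0,
\]
for every \(r\in(0,1)\). Both sides are symmetric under \(r\mapsto1-r\): the left side only swaps its last two terms, and \(d_r=d_{1-r}\) by \eqref{eq:dr-profile}. So I will assume \(r\leq\tfrac12\), so that \(r\leq1-r\).

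Then I check the identity on the four intervals cut out by the breakpoints \(r\leq1-r\leq1\). For \(z>1\), all three positive parts vanish, giving \(0=d_r(z)\). For \(1-r\leq z\leq1\), only the first term survives, giving \(1-z\); there \(1-z\leq r\leq1-r\) and \(1-z\leq z\), so the minimum in \eqref{eq:dr-profile} is \(1-z\). For \(r\leq z\leq1-r\), the left side is \((1-z)-(1-r-z)=r\); there \(r\leq z\) and \(r\leq1-z\), so \(d_r(z)=r\). For \(0<z\leq r\), the left side is \((1-z)-(r-z)-(1-r-z)=z\), and \(z\leq r\leq\tfrac12\) makes \(z\) the minimum. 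The value \(z=0\) is excluded since \(L>0\), although the formula holds there as well. This gives \eqref{eq:one-hinge-identity}.

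For \eqref{eq:Dr-exact}, I expand \(D_r(S)\) using \eqref{eq:Phi-definition}. The linear parts cancel because \(S-rS-(1-r)S=0\). The series is locally finite by \cref{lem:Phi-properties}: for \(m\leq S\), only indices with \(L_k<S\) contribute. So the three series can be subtracted term by term as finite sums, giving
\[
 D_r(S)=\sum_k q_k\bigl[(S-L_k)_+-(rS-L_k)_+-((1-r)S-L_k)_+\bigr].
\]
Applying \eqref{eq:one-hinge-identity} with \(L=L_k\) makes each summand equal to \(q_kSd_r(L_k/S)\). Terms with \(L_k\geq S\) have \(L_k/S\geq1\), so they vanish: \(d_r(1)=0\) and \(d_r=0\) on \((1,\infty)\). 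Nonnegativity then follows from \(q_k>0\) and \(d_r\geq0\).

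There is no real obstacle here. The only care needed is the bookkeeping of the breakpoint ordering, which the symmetry reduction \(r\leq\tfrac12\) handles, and justifying the termwise manipulation, which local finiteness of the series covers.
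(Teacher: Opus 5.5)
Your proposal is correct and follows essentially the same route as the paper. Both use the \(r\leftrightarrow1-r\) symmetry to reduce to \(r\leq1/2\), factor out \(S\) to work with \(z=L/S\), check the identity on the intervals cut out by \(r\leq1-r\leq1\), and expand \(D_r(S)\) termwise with cancellation of the linear parts; your explicit appeal to local finiteness to justify the termwise subtraction is a small addition.
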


\begin{proof}
Both sides of \eqref{eq:one-hinge-identity} are unchanged when
\(r\) is replaced by \(1-r\), so it suffices to take
\(0<r\leq1/2\).  If \(L\geq S\), then
\begin{equation}
 (S-L)_+=(rS-L)_+=((1-r)S-L)_+=0.
\end{equation}
If \(0<L<S\), put \(z=L/S\in(0,1)\).  Factoring out \(S\) in the
three terms on the left-hand side of
\eqref{eq:one-hinge-identity} gives
\begin{equation}
 (S-L)_+=S(1-z),
 \qquad
 (rS-L)_+=S(r-z)_+,
 \qquad
 ((1-r)S-L)_+=S(1-r-z)_+.
 \label{eq:one-hinge-scaling}
\end{equation}
Consequently,
\begin{align}
 &(1-z)-(r-z)_+-((1-r)-z)_+
 \notag\\
 &\quad=
 \begin{cases}
 (1-z)-(r-z)-(1-r-z)=z,
     &0<z\leq r,\\
 (1-z)-(1-r-z)=r,
     &r<z\leq1-r,\\
 1-z,
     &1-r<z<1,
 \end{cases}
 \notag\\
 &\quad=\min\{r,1-r,z,1-z\}=d_r(z).
 \label{eq:one-hinge-piecewise}
\end{align}
Multiplication by \(S\) proves \eqref{eq:one-hinge-identity}.
Using \eqref{eq:Phi-definition} and cancellation of the linear part,
\begin{align}
 D_r(S)
 &=\sum_{k=0}^{\infty}q_k
 \bigl[(S-L_k)_+-(rS-L_k)_+-((1-r)S-L_k)_+\bigr]
 \notag\\
 &=S\sum_{\{k:L_k<S\}}q_kd_r(L_k/S)\geq0,
\end{align}
which is \eqref{eq:Dr-exact}.
In particular, the definition of \(d_r\) gives
\begin{equation}
 D_r(S)=D_{1-r}(S),
 \qquad
 0\leq d_r(z)\leq\min\{r,1-r,z\}
 \quad \text{for }0\leq z\leq1.
 \label{eq:Dr-basic-consequences}
\end{equation}
We then complete the proof.
\end{proof}

For \(S\geq L_0\), let \(K(S)\) be the unique integer satisfying
\begin{equation}\label{eq:active-index}
 L_{K(S)}\leq S<L_{K(S)+1}.
\end{equation}
For \(k \ge K(S) +1 \), one has $S<L_{K(S)+1} \le L_k$. Hence \(d_r(L_k/S)=0\).
\eqref{eq:Dr-exact} may equivalently be written as the finite sum
\begin{equation}
 D_r(S)=S\sum_{k=0}^{K(S)}q_k
 d_r\left(\frac{L_k}{S}\right).
 \label{eq:Dr-finite-active-sum}
\end{equation}
If \(S=L_{K(S)}\), the last summand in
\eqref{eq:Dr-finite-active-sum} vanishes, so
\begin{equation*}
D_r(S)=S\sum_{\{k:L_k<S\}}q_k
 d_r\left(\frac{L_k}{S}\right)\,.
\end{equation*}

\begin{lemma}[Small-share upper bound and share-weighted lower bound]
\label{lem:Dr-bounds}
There is \(C<\infty\) such that, whenever
\(S\geq L_0\), \(K=K(S)\), and \(0<r\leq1/2\),
\begin{equation}\label{eq:Dr-upper}
 0\leq D_r(S)\leq
 C S q_K\sqrt r.
\end{equation}
Whenever \(S\geq L_1=2L_0\),
\begin{equation}\label{eq:Dr-weighted-lower}
 D_\alpha(S)\geq \frac12 S q_{K}
 \min\{\alpha,1-\alpha\},
 \qquad 0<\alpha<1.
\end{equation}
\end{lemma}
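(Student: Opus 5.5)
The plan is to work directly from the finite-sum representation \eqref{eq:Dr-finite-active-sum}, $D_r(S)=S\sum_{k=0}^{K}q_k d_r(L_k/S)$, and to use only two facts. The first is the geometric position of the ratios $L_k/S$ relative to the active index $K=K(S)$. The second is the two-sided comparability $2^{-1/2}q_k\le q_{k+1}\le q_k$ from \cref{lem:q-coefficients}.

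\textbf{Upper bound.} Write $k=K-j$ with $0\le j\le K$. By \eqref{eq:active-index},
\[
 \frac{L_k}{S}=2^{-j}\,\frac{L_K}{S}\in\bigl(2^{-j-1},2^{-j}\bigr].
\]
Iterating the lower comparability $q_{k+1}\ge 2^{-1/2}q_k$ gives $q_{K-j}\le 2^{j/2}q_K$. By \eqref{eq:Dr-basic-consequences}, $d_r(z)\le\min\{r,z\}$. Combining these,
\[
 D_r(S)\le S\,q_K\sum_{j\ge0}2^{j/2}\min\{r,2^{-j}\}.
\]
Split the series at the index $j_0$ with $2^{-j_0}\approx r$. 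For $j\le j_0$ the terms are $r\,2^{j/2}$, a geometric sum bounded by $Cr\,2^{j_0/2}\le C\sqrt r$. For $j>j_0$ the terms are $2^{-j/2}$, whose tail is bounded by $C2^{-j_0/2}\le C\sqrt r$. This yields \eqref{eq:Dr-upper} with an absolute constant $C$ (roughly $(1-2^{-1/2})^{-1}$ times a small factor). The restriction $r\le1/2$ is only used so that $\min\{r,1-r\}=r$.

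\textbf{Lower bound.} For $S\ge L_1$ we have $K\ge1$, so the summand $k=K-1$ is present, and all summands are nonnegative, so we may drop the others. The top level $k=K$ is useless here: $L_K/S$ may be arbitrarily close to $1$, which makes $d_\alpha$ tiny. Instead take $z'=L_{K-1}/S\in(1/4,1/2]$. Then $1-z'\ge1/2$, and
\[
 d_\alpha(z')=\min\{\alpha,1-\alpha,z'\}.
\]
Since $\min\{\alpha,1-\alpha\}\le1/2$ and $z'>1/4$, this minimum is at least $\tfrac12\min\{\alpha,1-\alpha\}$. Finally, monotonicity gives $q_{K-1}\ge q_K$, so $D_\alpha(S)\ge S\,q_{K-1}d_\alpha(z')\ge\tfrac12 S q_K\min\{\alpha,1-\alpha\}$, which is \eqref{eq:Dr-weighted-lower}.

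The only delicate point is the upper bound. The lower-level coefficients $q_{K-j}$ may be exponentially larger than $q_K$, and this growth must be beaten by the decay $\min\{r,2^{-j}\}$. The $2^{-1/2}$ rate in \cref{lem:q-coefficients} is exactly what makes the balanced split produce $\sqrt r$ rather than a divergent sum.
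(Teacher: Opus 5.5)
Your proposal is correct and follows the paper's proof essentially step for step. For the upper bound you use the same reindexing $k=K-j$, the bounds $q_{K-j}\le 2^{j/2}q_K$ and $d_r(z)\le\min\{r,z\}$, and the same crossover split yielding $\sqrt r$. For the lower bound you keep the single hinge $K-1$ with $L_{K-1}/S\in(1/4,1/2]$ and use $q_{K-1}\ge q_K$, exactly as the paper does. One small remark: the hypothesis $r\le 1/2$ is not actually used in your upper-bound argument, since $d_r(z)\le\min\{r,z\}$ holds for every $r\in(0,1)$.
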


\begin{proof}
Write \(K=K(S)\) and \(n=K-k\).  Equations \eqref{eq:q-properties}
and \eqref{eq:active-index} imply
\begin{equation}\label{eq:level-comparison}
 q_{K-n}\leq2^{n/2}q_K,
 \qquad
 \frac{L_{K-n}}S=2^{-n}\frac{L_{K}}S\leq2^{-n}.
\end{equation}
Since \(d_r(z)\leq\min\{r,z\}\), each term in
\eqref{eq:Dr-exact} satisfies
\begin{align}
 \frac{q_{K-n}}{q_K}
 d_r\left(\frac{L_{K-n}}S\right)
 \leq2^{n/2}
 \min\left\{r,\frac{L_{K-n}}S\right\}
 \leq\min\{r2^{n/2},2^{-n/2}\}.
 \label{eq:one-level-Dr-upper}
\end{align}
Summing \eqref{eq:one-level-Dr-upper} for \(0\leq n\leq K\)
gives
\begin{equation}\label{eq:hinge-geometric-sum}
 \frac{D_r(S)}{Sq_K}
 \leq\sum_{n=0}^K
 \min\{r2^{n/2},2^{-n/2}\}.
\end{equation}
Choose \(n_r\geq0\) so that
\(
 2^{-(n_r+1)}<r\leq2^{-n_r}.
 \)
If \(0\leq n\leq n_r\), then
\(r\leq2^{-n_r}\leq2^{-n}\), whereas, if \(n\geq n_r+1\), then
\(2^{-n}\leq2^{-(n_r+1)}<r\).  Hence
\begin{equation}
 \min\{r2^{n/2},2^{-n/2}\}
 =\begin{cases}
 r2^{n/2},&0\leq n\leq n_r,\\
 2^{-n/2},&n\geq n_r+1,
 \end{cases}
 \label{eq:Dr-crossover}
\end{equation}
and, since all summands are nonnegative,
\begin{align}
 \sum_{n=0}^{K}\min\{r2^{n/2},2^{-n/2}\}
 &\leq\sum_{n=0}^{n_r}r2^{n/2}
 +\sum_{n=n_r+1}^{\infty}2^{-n/2}.
 \label{eq:Dr-split-sum}
\end{align}
Then
\begin{align}
 \sum_{n=0}^{n_r}r2^{n/2}
 &=r\frac{2^{(n_r+1)/2}-1}{2^{1/2}-1}
 \leq\frac{2^{1/2}}{2^{1/2}-1}\sqrt r,
 \label{eq:Dr-geometric-first}\\
 \sum_{n=n_r+1}^{\infty}2^{-n/2}
 &=\frac{2^{-(n_r+1)/2}}{1-2^{-1/2}}
 \leq\frac{1}{1-2^{-1/2}}\sqrt r.
 \label{eq:Dr-geometric-second}
\end{align}
Indeed, the first inequality uses \(2^{n_r}\leq r^{-1}\), while the
second uses \(2^{-(n_r+1)}<r\).  Equations
\eqref{eq:hinge-geometric-sum}--\eqref{eq:Dr-geometric-second} prove
\eqref{eq:Dr-upper}.

If \(S\geq L_1\), then \(K=K(S)\geq1\).  From
\(L_K\leq S<L_{K+1}\) and \(L_k=2^kL_0\),
\begin{equation}
 \frac{L_{K-1}}S
 =\frac{L_K}{2S}\leq\frac12,
 \qquad
 \frac{L_{K-1}}S
 =\frac{L_K}{2S}>\frac{L_K}{2L_{K+1}}=\frac14.
 \label{eq:previous-level-position-calculation}
\end{equation}
Hence
\begin{equation}\label{eq:profile-rho-lower}
 d_\alpha(L_{K-1}/S)=\min\{\alpha,1-\alpha,L_{K-1}/S,1-L_{K-1}/S\}
 \geq\min\{\alpha,1-\alpha,1/4\}
 \geq\frac12\min\{\alpha,1-\alpha\}.
\end{equation}
Retaining only the hinge \(K-1\) in \eqref{eq:Dr-exact} and using
\(q_{K-1}\geq q_K\),
\begin{align}
 D_\alpha(S)
 &\geq S q_{K-1}d_\alpha(L_{K-1}/S)
 \geq\frac12S q_{K-1}\min\{\alpha,1-\alpha\}
 \geq\frac12S q_K\min\{\alpha,1-\alpha\},
\end{align}
which proves \eqref{eq:Dr-weighted-lower}.
\end{proof}
For the incoming pair, put
\begin{equation}\label{eq:incoming-share}
 \vartheta
 =\frac{\min\{m,m_1\}}{m+m_1}
 \in(0,1/2].
\end{equation}
The notation \(\mathcal A_\varphi\) is first introduced in
\eqref{eq:A-varphi} for bounded \(\varphi\), but since for each fixed incoming
pair \((m,m_1)\), the four masses
\(\alpha S\), \((1-\alpha)S\), \(m\), and \(m_1\) belong to
\([0,S]\), where \(\Phi\) is bounded, \(\mathcal A_\Phi(m,m_1)\) is well defined.  Using
\eqref{eq:model-rate-disintegration}, we therefore set
\begin{equation}
 \mathcal A_\Phi(m,m_1)
 :=
 \lambda(m,m_1)\int_0^1g(\alpha)
 [
 \Phi(\alpha S)+\Phi((1-\alpha)S)
 -\Phi(\vartheta S)-\Phi((1-\vartheta)S)
 ]\,d\alpha.
 \label{eq:A-Phi}
\end{equation}
Since
\begin{align}
 \Phi(m)+\Phi(m_1)
 &=\Phi(S)-D_{\vartheta}(S),
 \label{eq:incoming-Phi-sum}\\
 \Phi(\alpha S)+\Phi((1-\alpha)S)
 &=\Phi(S)-D_\alpha(S),
 \label{eq:outgoing-Phi-sum}
\end{align}
and \eqref{eq:model-fixed-share}--\eqref{eq:model-rate-disintegration}, we have
\begin{align} \label{eq:signed-share-identity}
 \mathcal A_\Phi(m,m_1)
 &=\int_0^1\lambda(m,m_1)g(\alpha)
 \bigl[D_{\vartheta}(S)-D_\alpha(S)\bigr] \,d\alpha
 \notag\\
 &=\lambda(m,m_1)D_{\vartheta}(S)
 \int_0^1g(\alpha)\,d\alpha
 -\lambda(m,m_1)\int_0^1g(\alpha)D_\alpha(S)\,d\alpha\notag\\ &=\lambda(m,m_1)\left[D_{\vartheta}(S) - \int_0^1g(\alpha)D_\alpha(S)\,d\alpha\right].
\end{align}
Substituting \eqref{eq:Dr-exact} into both terms of
\eqref{eq:signed-share-identity} gives the hinge-by-hinge identity
\begin{align}
 \mathcal A_\Phi(m,m_1)
 &=\lambda(m,m_1)S
 \sum_{\{k:L_k<S\}}q_k
 \left[
 d_{\vartheta}\left(\frac{L_k}{S}\right)
 -\int_0^1g(\alpha)
 d_\alpha\left(\frac{L_k}{S}\right)\,d\alpha
 \right].
 \label{eq:signed-hinge-by-hinge}
\end{align}

For the remainder of this section, set
\begin{equation}
 G_1:=\mathbb E_{\alpha\sim g}[\min\{\alpha,1-\alpha\}]=\int_0^1\min\{\alpha,1-\alpha\}g(\alpha)\,d\alpha
 \in(0,1/2].
\end{equation}
The strict positivity of \(G_1\) follows from
\(g\geq0\), \(g\in C([0,1])\), and \(\displaystyle\int_0^1g=1\): the function
\(g\) is positive on a nonempty interval meeting \((0,1)\), where
\(\min\{\alpha,1-\alpha\}>0\).

\begin{proposition}[Signed redistribution estimate]
\label{prop:signed-redistribution}
There are \(\vartheta_0\in(0,1/4]\) and \(C<\infty\) such that, for
\(S\geq L_1\),
\begin{align}
 \mathcal A_\Phi(m,m_1)
 &\leq S q_{K(S)}\lambda(m,m_1)
 \left(C\sqrt{\vartheta}-\frac{G_1}{2}\right),
 \label{eq:general-signed}\\
 \vartheta\leq \vartheta_0
 &\quad\Longrightarrow\quad
 \mathcal A_\Phi(m,m_1)
 \leq-\frac{G_1}{4}
 S q_{K(S)}\lambda(m,m_1),
 \label{eq:general-negative}\\
 \bigl(\mathcal A_\Phi(m,m_1)\bigr)_+
 &\leq C S q_{K(S)}\lambda(m,m_1).
 \label{eq:general-positive}
\end{align}
\end{proposition}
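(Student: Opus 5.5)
The plan is to start from the signed identity \eqref{eq:signed-share-identity},
\[
 \mathcal A_\Phi(m,m_1)=\lambda(m,m_1)\Bigl[D_{\vartheta}(S)-\int_0^1g(\alpha)D_\alpha(S)\,d\alpha\Bigr],
\]
and bound the two terms separately using \cref{lem:Dr-bounds}.

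\textbf{Gain term.} Since \(\vartheta\in(0,1/2]\) and \(S\geq L_1\geq L_0\), the small-share upper bound \eqref{eq:Dr-upper} with \(r=\vartheta\) gives
\[
 D_\vartheta(S)\leq C S q_{K(S)}\sqrt\vartheta .
\]

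\textbf{Loss term.} Since \(S\geq L_1\), the share-weighted lower bound \eqref{eq:Dr-weighted-lower} holds for every \(\alpha\in(0,1)\). Because \(g\geq0\), we may integrate it against \(g\), which gives
\[
 \int_0^1g(\alpha)D_\alpha(S)\,d\alpha\geq\tfrac12 Sq_{K(S)}G_1 .
\]

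\textbf{Combining.} Multiplying by \(\lambda(m,m_1)>0\) and combining the two bounds yields \eqref{eq:general-signed} directly.

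For \eqref{eq:general-negative}, choose
\[
 \vartheta_0=\min\Bigl\{\tfrac14,\ \bigl(G_1/(4C)\bigr)^2\Bigr\},
\]
which is positive since \(G_1>0\). Then \(\vartheta\leq\vartheta_0\) implies \(C\sqrt\vartheta\leq G_1/4\), so the bracket in \eqref{eq:general-signed} is at most \(-G_1/4\).

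For \eqref{eq:general-positive}, use \(\vartheta\leq 1/2\) and drop the nonpositive term \(-G_1/2\). The bracket is then at most \(C\sqrt{1/2}\leq C\), and taking positive parts preserves the bound because the right-hand side is nonnegative.

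\textbf{Points needing care.}
\begin{itemize}
 \item The constant \(C\) in \eqref{eq:Dr-upper} must be uniform in \(S\) and \(r\). This is already guaranteed by \cref{lem:Dr-bounds}, whose constant depends only on the geometric sums \(\frac{2^{1/2}}{2^{1/2}-1}+\frac1{1-2^{-1/2}}\).
 \item The integral \(\int_0^1 g(\alpha)D_\alpha(S)\,d\alpha\) must be finite and measurable. This holds because \(D_\alpha(S)\) is continuous in \(\alpha\) and bounded by \(\Phi(S)\) for fixed \(S\), so the splitting of \(\mathcal A_\Phi\) into the two terms is legitimate.
\end{itemize}

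The main obstacle was the lower bound, which needs \(S\geq L_1\) so that the hinge \(K-1\) sits at a relative position between \(1/4\) and \(1/2\). That work has already been done in \cref{lem:Dr-bounds}, so the argument here amounts to assembling the pieces.
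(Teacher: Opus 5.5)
Your proposal is correct and follows the paper's proof essentially step for step. It inserts the same two bounds from \cref{lem:Dr-bounds} into \eqref{eq:signed-share-identity}, uses the same choice $\vartheta_0=\min\{1/4,(G_1/(4C))^2\}$, and differs only cosmetically in \eqref{eq:general-positive}: you obtain it from \eqref{eq:general-signed} by discarding $-G_1/2$, while the paper bounds $(\mathcal A_\Phi)_+\leq\lambda(m,m_1)D_\vartheta(S)$ directly from $D_\alpha(S)\geq0$.
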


\begin{proof}
For \(S\geq L_1\), \eqref{eq:Dr-upper} gives
\begin{equation}
 D_{\vartheta}(S)
 \leq C S q_{K(S)}\sqrt{\vartheta},
 \label{eq:signed-incoming-upper}
\end{equation}
while \eqref{eq:Dr-weighted-lower} and the definition of \(G_1\) give
\begin{align}
 \int_0^1g(\alpha)D_\alpha(S)\,d\alpha
 \geq\frac12S q_{K(S)}
 \int_0^1\min\{\alpha,1-\alpha\}g(\alpha)\,d\alpha=\frac{G_1}{2}S q_{K(S)}.
 \label{eq:signed-outgoing-lower}
\end{align}
Substituting \eqref{eq:signed-incoming-upper} and
\eqref{eq:signed-outgoing-lower} into
\eqref{eq:signed-share-identity} yields
\begin{align}
 \mathcal A_\Phi(m,m_1)
 &\leq\lambda(m,m_1)
 \left[C S q_{K(S)}\sqrt{\vartheta}
 -\frac{G_1}{2}S q_{K(S)}\right]
 \notag\\
 &=S q_{K(S)}\lambda(m,m_1)
 \left(C\sqrt{\vartheta}-\frac{G_1}{2}\right),
\end{align}
which proves \eqref{eq:general-signed}.  Since \(G_1>0\) and
\(C>0\), set
\begin{equation}\label{eq:vartheta0-choice}
 \vartheta_0=\min\left\{\frac14,
 \left(\frac{G_1}{4C}\right)^2\right\}
 \in(0,1/4],
 \qquad
 C\sqrt{\vartheta_0}\leq\frac{G_1}{4}.
\end{equation}
If \(\vartheta\leq \vartheta_0\), then
\begin{align}
 C\sqrt{\vartheta}-\frac{G_1}{2}
 &\leq C\sqrt{\vartheta_0}-\frac{G_1}{2}
 \leq-\frac{G_1}{4},
\end{align}
which proves \eqref{eq:general-negative}.  Finally,
\eqref{eq:signed-share-identity}, \(\lambda(m,m_1)>0\),
\(D_\alpha(S)\geq0\), and
\eqref{eq:signed-incoming-upper} imply, term by term,
\begin{align}
 \bigl(\mathcal A_\Phi(m,m_1)\bigr)_+
 &=\lambda(m,m_1)
 \left[D_{\vartheta}(S)
 -\int_0^1g(\alpha)D_\alpha(S)\,d\alpha\right]_+
 \notag\\
 &\leq\lambda(m,m_1)D_{\vartheta}(S)
 \notag\\
 &\leq C S q_{K(S)}\lambda(m,m_1)
 \sqrt{\vartheta}
 \leq C S q_{K(S)}\lambda(m,m_1),
\end{align}
which is \eqref{eq:general-positive}.
\end{proof}

\subsection{Comparison with bounded-mass partners and propagation of the
\texorpdfstring{\(\Phi\)}{Phi}-moment}

The purpose of this subsection is to propagate the tail-adapted
\(\Phi\)-moment constructed in \cref{subsec:tokens}.  The evolution of
this moment is governed by the signed increment
\(\mathcal A_\Phi(m,m_1)\).  Collisions with
\(\mathcal A_\Phi>0\) increase the moment, whereas collisions with
\(\mathcal A_\Phi<0\) decrease it.  We shall show that the positive
contribution is uniformly bounded on bounded mass ranges, and is
controlled at large mass by a part of the negative contribution.

\begin{theorem}[Uniform propagation of the tail-adapted moment]
\label{thm:Phi-propagation}
For every \(T>0\), there is \(C_T<\infty\), independent of \(N\), such
that
\begin{equation}
 \sup_{N\geq1}\sup_{0\leq t\leq T}
 \int_X\Phi(m)f_N(t,x)\,dx
 \leq \int_X\Phi(m)f_0(x)\,dx+C_T.
 \label{eq:Phi-propagation}
\end{equation}
\end{theorem}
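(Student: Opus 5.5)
The plan is to test the truncated equation against a bounded version of \(\Phi\). Then I split the resulting production \(\int\chi_N E^\gamma\mathcal A_\Phi f_Nf_N\) into a bounded-mass part, a dissipative part from unequal pairs, and a large--comparable part. The large--comparable part should be absorbed by the dissipation coming from the reservoir \(\mathcal B_T\) of \cref{cor:bounded-mass-set}.

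\textbf{Step 1 (admissible test function).} Since \(\chi_N(S)=0\) for \(S\geq2N\), every collision that contributes has \(m,m_1,\alpha S,(1-\alpha)S\in(0,2N)\). Put \(\Phi_N(m)=\Phi(\min\{m,2N\})\). This is bounded and Borel, and on \(\{\chi_N\neq0\}\) its increment coincides with \(\mathcal A_\Phi\). Then \eqref{eq:mass-test-identity} gives
\begin{equation*}
 \int_X\Phi_N f_N(t)\,dx=\int_X\Phi_Nf_0\,dx+\tfrac12\norm b_{L^1}\int_0^t\int_{X^2}\chi_N E^\gamma\mathcal A_\Phi f_Nf_N\,dx_1\,dx\,d\tau .
\end{equation*}
Since \(\Phi_N\leq\Phi\), and \(\Phi_N=\Phi\) on the range where \(f_N\) carries any \(\Phi\)-weight that matters, passing \(N\)-uniformly to \(\int\Phi f_N\) is harmless. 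More precisely, I would bound \(\int\Phi_Nf_N\) and then use monotone convergence in a second truncation parameter. It therefore suffices to show that the time integral of the production is bounded above by a constant depending only on \(T\) and \(f_0\).

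\textbf{Step 2 (splitting the production).} On \(\{S<L_1\}\), \(\abs{\mathcal A_\Phi}\leq 2a_+(1+L_1)\Phi(L_1)\). Hence \cref{lem:uniform-rate-time} bounds this part by \(C(L_1)C_{\mathrm{rate}}T\).

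On \(\{S\geq L_1\}\), \cref{prop:signed-redistribution} gives two pieces. When \(\vartheta\leq\vartheta_0\) the integrand is \(\leq-\tfrac{G_1}{4}Sq_{K(S)}\lambda E^\gamma\), which is pure dissipation. When \(\vartheta>\vartheta_0\) both masses lie in \([\vartheta_0S,S]\), and the integrand is \(\leq CSq_{K(S)}\lambda E^\gamma\).

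I then decompose by dyadic shells \(S\in[L_j,L_{j+1})\). Since the ratios \(S/L_j\) lie in \([1,2)\), and also in \([\vartheta_0,2]\) for the individual masses, the uniform convergence theorem for regularly varying functions together with \eqref{eq:model-intensity-comparison} gives \(\lambda(m,m_1)\asymp\kappa(L_j)\) uniformly for \(j\) large. Combined with \(q_{K(S)}=q_j\), this shows that both the production and the dissipation carry the common factor \(\kappa(L_j)L_jq_j\).

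From the dissipation I keep only the pairs with \(x_1\in\mathcal B_T\) and \(m\geq R_T/\vartheta_0\). For these \(\vartheta\leq\vartheta_0\), and the dissipation at shell \(j\) is bounded below by \(c\kappa(L_j)L_jq_j\int_{\{m\sim L_j\}}\int_{\mathcal B_T}E^\gamma f_Nf_N\).

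\textbf{Step 3 (comparison; main obstacle).} I must show
\begin{equation*}
 \int_{\{m,m_1\sim L_j\}}E^\gamma f_Nf_N\ \leq\ \varepsilon_j\int_{\{m\sim L_j\}}\int_{\mathcal B_T}E^\gamma f_Nf_N+(\text{summable remainder}),\qquad \varepsilon_j\lesssim L_j^{\gamma-1}.
\end{equation*}
On the left I would use \(E^\gamma\leq (m\abs v^2)^\gamma+(m_1\abs{v_1}^2)^\gamma\) together with the shell population bound \(\int_{\{m_1\sim L_j\}}f_N\leq M_1(f_0)/(\vartheta_0L_j)\). This gives \(\lesssim L_j^{-1}\int_{\{m\sim L_j\}}(\text{frequency of }x)f_N\).

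On the right, for \(x_1\in\mathcal B_T\) and \(m\) large, \(E\geq\tfrac{r_T}{2}\abs{v-v_1}^2\). The hardest point is a lower bound \(\int_{\mathcal B_T}\abs{v-v_1}^{2\gamma}f_N(x_1)\,dx_1\geq c\,(1+\abs v^{2\gamma})\) uniformly in \(N\), \(t\) and \(v\), relative to a comparable upper quantity \(L_j^{\gamma}\abs{v}^{2\gamma}\) for the large particle. I expect this to be the genuine difficulty, because the reservoir could a priori be concentrated in velocity.

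The first thing I would try is to avoid any pointwise velocity lower bound. Instead I would compare the large--large frequency \(L_j^\gamma\abs{v-v_1}^{2\gamma}\) to the large--bounded frequency \(r_T^\gamma\abs{v-w}^{2\gamma}\) through the triangle inequality \(\abs{v-v_1}^{2\gamma}\leq\abs{v-w}^{2\gamma}+\abs{w-v_1}^{2\gamma}\), averaged over \(w\) in the reservoir. This produces exactly the factor \(L_j^\gamma\cdot L_j^{-1}/\eta_T\), at the cost of symmetric terms \(\int_{\mathcal B_T}\abs{w-v_1}^{2\gamma}\) that I can absorb into the same dissipation by symmetry of \(x\leftrightarrow x_1\).

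\textbf{Step 4 (conclusion).} Given Step 3, choose \(j_0\) with \(C L_j^{\gamma-1}\leq\tfrac12\) for \(j\geq j_0\). For those shells the production is absorbed by half the dissipation. The finitely many shells \(j<j_0\) have \(S<L_{j_0+1}\) and are bounded as in the \(S<L_1\) case via \(C_{\mathrm{rate}}\). Dropping the remaining negative dissipation yields \eqref{eq:Phi-propagation} with \(C_T\) depending only on \(T\), \(f_0\), \(\eta_T\), \(r_T\) and \(R_T\).
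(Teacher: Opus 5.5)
Your proposal is correct and follows essentially the paper's own argument. The shared steps are the truncated-test evolution identity passed to $\int\Phi f_N$ by monotone convergence, the split into a bounded range, dissipative unequal pairs and dyadic shells of large comparable pairs, and the three-point comparison averaged over the reservoir $\mathcal B_T$, where $L_j^\gamma$ times the shell population bound $M_1(f_0)/(\vartheta_0 L_j)$ gives the factor $L_j^{\gamma-1}$. When you write it out, make the following explicit: keep $\chi_N$ in the comparison, using that $\chi$ is nonincreasing and both large masses exceed $R_T\ge\mu$, so $\chi_N(m+m_1)\le\chi_N(m+\mu)$; include the constant $2^{(2\gamma-1)_+}$ in the velocity triangle inequality when $\gamma>1/2$; and when summing over shells, account for the bounded overlap of the comparison mass ranges and use $q_{k+1}\ge 2^{-1/2}q_k$ for partner pairs that fall into the next dyadic level.
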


We first connect the quantity \(\mathcal A_\Phi\) with the time
evolution of the approximate solutions. 

\begin{lemma}[Evolution identity for the \(\Phi\)-moment]
\label{lem:Phi-evolution}
For every \(N\geq1\), the map
\begin{equation*}
 t\longmapsto\int_X\Phi(m)f_N(t,x)\,dx
\end{equation*}
is locally absolutely continuous.  For almost every \(t>0\),
\begin{align}
 \frac{d}{dt}\int_X\Phi(m)f_N(t,x)\,dx
 =\frac{1}2\norm{b}_{L^1}
 \int_{X^2}\chi_N(S)E^\gamma\mathcal A_\Phi(m,m_1)
 f_N(t,x)f_N(t,x_1)\,dx_1\,dx.
 \label{eq:Phi-moment-derivative}
\end{align}
\end{lemma}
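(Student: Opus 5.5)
The plan is to reduce the statement to the weak identity \eqref{eq:truncated-weak-identity} by truncating \(\Phi\) to a bounded test function, and then to remove the truncation using the mass cutoff \(\chi_N\). The key point is that every collision with \(\chi_N(S)\neq0\) has \(S<2N\), so all four masses involved lie in \((0,2N)\). For \(R\geq 2N\), set \(\varphi_R(m)=\min\{\Phi(m),\Phi(R)\}\). This function is bounded and Borel, and it depends only on \(m\). Applying \eqref{eq:mass-test-identity} with \(\varphi=\varphi_R\) gives, for \(0\leq s\leq t\),
\begin{equation*}
 \int_X\varphi_R(m)f_N(t,x)\,dx-\int_X\varphi_R(m)f_N(s,x)\,dx
 =\frac12\norm b_{L^1}\int_s^t\int_{X^2}\chi_N(S)E^\gamma\mathcal A_{\varphi_R}(m,m_1)f_N f_N\,dx_1\,dx\,d\tau .
\end{equation*}
On \(\{\chi_N(S)\neq0\}\) we have \(\alpha S,(1-\alpha)S,m,m_1\in[0,2N)\subset[0,R]\). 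There \(\varphi_R=\Phi\), and the four-point combination in \eqref{eq:A-varphi} coincides with the one in \eqref{eq:A-Phi}; the unordered pair \(\{m,m_1\}\) equals \(\{\vartheta S,(1-\vartheta)S\}\). Hence the right-hand side is exactly the integral of \eqref{eq:Phi-moment-derivative}, independently of \(R\).

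Next I will show that the integrand on the right is integrable, uniformly in time. On \(\{S<2N\}\) we have \(|\mathcal A_\Phi|\leq \lambda(m,m_1)\cdot 2\Phi(2N)\cdot\int g\). By \eqref{eq:model-intensity-comparison} and \eqref{eq:model-regular-variation}, \(\lambda\leq\lambda_+\kappa_+(1+2N)\). Therefore
\[
\chi_N(S)E^\gamma|\mathcal A_\Phi|\leq C_N E^\gamma .
\]
By \eqref{eq:unweighted-rate} and \eqref{eq:truncated-moments}, \(\int_{X^2}E^\gamma f_Nf_N\leq 2^\gamma M_0(f_0)^{2-\gamma}M_2(f_0)^\gamma\) for every \(\tau\). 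So the integrand in \(\tau\) belongs to \(L^\infty(0,\infty)\). Measurability in \(\tau\) follows from the joint measurability already implicit in \eqref{eq:truncated-weak-identity}.

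It remains to pass from \(\varphi_R\) to \(\Phi\) on the left-hand side. Since \(\Phi\) is convex and nonnegative with \(\Phi(m)\geq m\), finiteness of \(\int\Phi f_N(t)\) is not known a priori; it only holds at \(t=0\), by \eqref{eq:Phi-initial}. I will take \(s=0\) and let \(R\to\infty\). By monotone convergence the left-hand terms converge to \(\int\Phi f_N(t)\) and \(\int\Phi f_0\), while the right-hand side stays fixed and finite. This shows that \(\int\Phi f_N(t)\,dx<\infty\) for all \(t\), and that it equals \(\int\Phi f_0\) plus the integral from \(0\) to \(t\) of a bounded function. Subtracting the identities at \(s\) and \(t\) then gives local (indeed Lipschitz) absolute continuity. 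The Lebesgue differentiation theorem yields \eqref{eq:Phi-moment-derivative} for almost every \(t\).

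The main care point is the identification \(\mathcal A_{\varphi_R}=\mathcal A_\Phi\) on the cutoff support. This requires the observation that \(\chi_N(S)\neq0\) forces \(S<2N\), which in turn uses \eqref{eq:chi-properties}. The rest is routine monotone convergence.
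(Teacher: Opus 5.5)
Your proposal is correct and follows the paper's proof essentially step for step. The paper truncates with \(\Phi^{[H]}=\min\{\Phi,H\}\) for \(H\geq 1+\Phi(2N)\), which is your \(\varphi_R\) with \(H=\Phi(R)\); it then identifies \(\mathcal A_{\Phi^{[H]}}=\mathcal A_\Phi\) on \(\{\chi_N(S)\neq0\}\), bounds the integrand by \(C_N E^\gamma\) via \eqref{eq:unweighted-rate}, lets \(H\to\infty\) at \(s=0\) by monotone convergence, and subtracts the identities at \(s\) and \(t\), exactly as you do.
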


\begin{proof}
For a truncation height \(H>0\), set
\(\Phi^{[H]}=\min\{\Phi,H\}\).  Since \(\Phi^{[H]}\) is bounded and
Borel, \eqref{eq:mass-test-identity} gives, for \(0\leq s\leq t\),
\begin{align}
 &\int_X\Phi^{[H]}(m)f_N(t,x)\,dx
 -\int_X\Phi^{[H]}(m)f_N(s,x)\,dx
 \notag\\
 &\quad=\frac{1}2\norm{b}_{L^1}
 \int_s^t\int_{X^2}\chi_N(S)E^\gamma
 \mathcal A_{\Phi^{[H]}}(m,m_1)
 f_N(\tau,x)f_N(\tau,x_1)\,dx_1\,dx\,d\tau.
 \label{eq:truncated-Phi-identity}
\end{align}
If \(\chi_N(S)\neq0\), then \(S<2N\).  Both incoming masses and both
outgoing masses therefore belong to \([0,2N]\).  Since \(\Phi\) is
continuous and increasing, the finite number
\(
 H_N:=1+\Phi(2N)
\)
satisfies: for every \(H\geq H_N\),
\begin{equation}
 \mathcal A_{\Phi^{[H]}}(m,m_1)=\mathcal A_\Phi(m,m_1)
 \qquad\text{whenever }\chi_N(S)\neq0.
 \label{eq:truncated-Phi-agreement}
\end{equation}
On the same active set, \(\displaystyle\int_0^1g(\alpha)\,d\alpha=1\),
\eqref{eq:model-intensity-comparison}, and
\eqref{eq:model-regular-variation} give
\begin{equation}
 \abs{\mathcal A_\Phi(m,m_1)}
 \leq4\lambda_+\kappa_+(1+2N)\Phi(2N).
 \label{eq:active-Phi-bound}
\end{equation}
For \(H\geq H_N\), \eqref{eq:unweighted-rate} and
\eqref{eq:truncated-moments} give
\begin{equation}
 \int_{X^2}E^\gamma
 f_N(\tau,x)f_N(\tau,x_1)\,dx_1\,dx
 \leq2^\gamma M_0(f_0)^{2-\gamma}M_2(f_0)^\gamma,
 \label{eq:truncated-unweighted-rate}
\end{equation}
\begin{align}
   &\frac{1}2\norm{b}_{L^1}
 \int_s^t\int_{X^2}\chi_N(S)E^\gamma
 |\mathcal A_{\Phi^{[H]}}(m,m_1)|
 f_N(\tau,x)f_N(\tau,x_1)\,dx_1\,dx\,d\tau
 \notag\\
 &\quad\leq
 2^{1+\gamma}(t-s)\norm{b}_{L^1(\Sph)}
 \lambda_+\kappa_+(1+2N)\Phi(2N)
 M_0(f_0)^{2-\gamma}M_2(f_0)^\gamma.
\end{align}

Take \(s=0\) in \eqref{eq:truncated-Phi-identity} and let
\(H\to\infty\).  Monotone convergence,
\eqref{eq:Phi-initial}, and
\eqref{eq:truncated-Phi-agreement} yield
\begin{align}
 &\int_X\Phi(m)f_N(t,x)\,dx-\int_X\Phi(m)f_0(x)\,dx
 \notag\\
 &\quad=\frac{1}2\norm{b}_{L^1}
 \int_0^t\int_{X^2}\chi_N(S)E^\gamma\mathcal A_\Phi
 f_N(\tau,x)f_N(\tau,x_1)\,dx_1\,dx\,d\tau.
 \label{eq:integrated-Phi-identity}
\end{align}
Subtracting \eqref{eq:integrated-Phi-identity} at times \(s\) and
\(t\) proves local absolute continuity and
\eqref{eq:Phi-moment-derivative}.
\end{proof}

Fix \(T>0\), and let
\(\mathcal B_T=\{(m,v):r_T\leq m\leq R_T\}\) and \(\eta_T>0\) be
given by \cref{cor:bounded-mass-set}.  If \(M_0(f_0)=0\), then
\(f_0=0\) almost everywhere and \(f_N\equiv0\), so all the estimates
below are immediate.  We henceforth assume \(M_0(f_0)>0\).

Consider the signed collision integral on the right-hand side of
\eqref{eq:Phi-moment-derivative}.  Set
\(
 M:=\max\{m,m_1\},
\)
and introduce a dyadic index \(J=J(T)\), whose size will be fixed after
the comparison estimates below.  Now we divide the incoming space into four classes, as summarized in \cref{tab:four-Phi-classes}:
\begin{align}
  X^2 &=\Omega_{\mathrm I}\cup\Omega_{\mathrm {II}}\cup\Omega_{\mathrm {III}}\cup\Omega_{\mathrm {IV}},\\
 \Omega_{\mathrm I}
 &=\{M<\vartheta_0L_J\},
 \label{eq:class-I}\\
 \Omega_{\mathrm {II}}
 &=\{M\geq\vartheta_0L_J,\ \vartheta\leq\vartheta_0\},
 \label{eq:class-II}\\
 \Omega_{\mathrm {III}}
 &=\{\vartheta_0L_J\leq M<L_J,\ \vartheta>\vartheta_0\},
 \label{eq:class-III}\\
 \Omega_{\mathrm {IV}}
 &=\{M\geq L_J,\ \vartheta>\vartheta_0\}.
 \label{eq:class-IV}
\end{align}
Classes I and III form a bounded mass range and will be estimated
directly.  All pairs in Class II have a negative contribution of $\mathcal A_\Phi(m,m_1)$, and a suitable part will be retained to control the positive contribution of Class IV. Indeed, we divide Class IV into dyadic shells $\mathcal H_j=\{L_j\leq M<L_{j+1}\}$ for $j\geq J$ and compare the positive contribution of each shell, which gains a factor \(L_j^{\gamma-1}\). Since \(\gamma<1\), choosing \(J\) sufficiently large will make the total positive contribution of  Class IV absorbable by the retained part of Class II.  The comparison estimates and the precise choice of \(J\) are given below.

\begin{table}[h!]
\centering
\small
\renewcommand{\arraystretch}{1.25}
\begin{tabular}{@{}c p{0.28\textwidth} p{0.55\textwidth}@{}}
\toprule
Class & Incoming-pair geometry & Role in the estimate\\
\midrule
I
& \(M<\vartheta_0L_J\)
& Bounded incoming masses.  We can control the positive part directly by
the physical moment bounds.\\

II
& \(M\geq\vartheta_0L_J\), \(\vartheta\leq\vartheta_0\)
& Unequal incoming masses with 
\(\mathcal A_\Phi\leq0\).  Retain the collisions in which the large
particle meets \(\mathcal B_T\) as the negative contribution used
against Class IV.\\

III
& \(\vartheta_0L_J\leq M<L_J\), \(\vartheta>\vartheta_0\)
& Comparable masses in the bounded transition range.  We can control the
positive part directly as in Class I.\\

IV
& \(M\geq L_J\), \(\vartheta>\vartheta_0\)
& Large comparable masses (the main difficulty).  We can decompose the region into dyadic shells and compare
their positive contribution with the negative contribution retained
from Class II.\\
\bottomrule
\end{tabular}
\caption{The four incoming-pair configurations and their roles in the
signed \(\Phi\)-estimate.}
\label{tab:four-Phi-classes}
\end{table}

The next two lemmas provide the scale and collision-frequency
comparisons needed for Class IV.  After proving them, we fix \(J\) and
estimate the four classes separately.

\begin{lemma}[Comparison of the signed-increment scale]
\label{lem:Phi-scale-comparison}
Fix \(0<c<C<\infty\).  There are constants
\(0<C_1(c,C)\leq C_2(c,C)<\infty\) such that, for every sufficiently
large dyadic level \(R=L_j\), whenever
\(cR\leq S\leq CR\),
\begin{equation}
 C_1\kappa(R)R q_j
 \leq S q_{K(S)}\lambda(m,m_1)
 \leq C_2\kappa(R)R q_j.
 \label{eq:Phi-scale-comparison}
\end{equation}
\end{lemma}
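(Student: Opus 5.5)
We split the quantity $S q_{K(S)}\lambda(m,m_1)$ into its three factors and compare each with its value at the dyadic level $R=L_j$, using only the structural assumptions; the constants will depend on $c,C$ but not on $j$.

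\emph{Factor $S$.} We have $cR\leq S\leq CR$ directly.

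\emph{Factor $\lambda(m,m_1)$.} By \eqref{eq:model-intensity-comparison},
\[
\lambda_-\kappa(S)\leq\lambda(m,m_1)\leq\lambda_+\kappa(S),
\]
so it suffices to compare $\kappa(S)$ with $\kappa(R)$ uniformly for $S/R\in[c,C]$. The tool is the uniform convergence theorem for regularly varying functions: since $\kappa$ is continuous, hence measurable, and regularly varying with index $p$, the convergence $\kappa(\zeta R)/\kappa(R)\to\zeta^p$ holds uniformly for $\zeta$ in the compact set $[c,C]\subset(0,\infty)$. Hence for all $R$ large enough,
\[
\tfrac12\min\{c^p,C^p\}\leq\kappa(S)/\kappa(R)\leq 2\max\{c^p,C^p\}.
\]
This is the only place where "sufficiently large $R$" is needed. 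It is also the main point where care is required, since the limit in \eqref{eq:model-regular-variation} is stated only pointwise in $\zeta$. If one prefers not to cite the uniform convergence theorem, I would try a self-contained route. Cover $[c,C]$ by finitely many ratios $2^{i}$, $i$ ranging over the integers between $\lfloor\log_2c\rfloor$ and $\lceil\log_2C\rceil$, which handles $S$ equal to a dyadic multiple of $R$. To interpolate between these points, continuity of $\kappa$ alone is not enough at large scales, so the uniform convergence theorem (Karamata/Bingham--Goldie--Teugels) is the honest citation, and I would simply invoke it.

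\emph{Factor $q_{K(S)}$.} First ensure $R$ is large enough that $S\geq cR\geq L_0$, so that $K(S)$ is defined. Since $L_{K(S)}\leq S<L_{K(S)+1}$ and $R=L_j$, we get $2^{K(S)-j}\leq S/R\leq C$ and $2^{K(S)+1-j}>S/R\geq c$. Therefore
\[
|K(S)-j|\leq n_0:=1+\lceil\log_2\max\{C,1/c\}\rceil .
\]
By \eqref{eq:q-properties}, $q$ is nonincreasing with $q_{k+1}\geq 2^{-1/2}q_k$, so
\[
2^{-n_0/2}q_j\leq q_{K(S)}\leq 2^{n_0/2}q_j .
\]
We also need $j-n_0\geq0$, which again holds for $j$ large.

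\emph{Conclusion.} Multiplying the three two-sided bounds gives \eqref{eq:Phi-scale-comparison} with
\[
C_1=c\,\lambda_-\,\tfrac12\min\{c^p,C^p\}\,2^{-n_0/2},\qquad C_2=C\,\lambda_+\,2\max\{c^p,C^p\}\,2^{n_0/2}.
\]
These constants depend only on $c,C$ and the structural constants. The threshold on $j$ depends on $c$, $C$, $\kappa$ and $L_0$.
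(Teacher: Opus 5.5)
Your proposal is correct and follows the paper's proof essentially step by step. Both arguments use the uniform convergence theorem for regularly varying functions on $[c,C]$ to compare $\kappa(S)$ with $\kappa(R)$, the dyadic bound $|K(S)-j|\le 1+\lceil\log_2\max\{C,c^{-1}\}\rceil$, the ratio bounds on $q_k$ from \eqref{eq:q-properties}, and the product of the three two-sided estimates. Your extra requirement $j\ge n_0$ is harmless but unnecessary, since $S\ge L_0$ already forces $K(S)\ge 0$.
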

\begin{proof}
Set \(K=K(S)\).  We first compare \(\kappa(S)\) with
\(\kappa(R)\).  By assumption \textnormal{(RV)}, the function
\(\kappa\) is regularly varying with index \(p\).  Hence the uniform
convergence theorem for regularly varying functions
\cite[Theorem~1.5.2]{BinghamGoldieTeugels1987} gives
\begin{equation}
 \sup_{\zeta\in[c,C]}
 \left|
 \frac{\kappa(\zeta R)}{\kappa(R)}-\zeta^p
 \right|
 \longrightarrow0
 \qquad\text{as }R\to\infty.
 \label{eq:kappa-uniform-compact-scale}
\end{equation}
Choose \(R_0\) so large that for \(R\geq R_0\),
the supremum in \eqref{eq:kappa-uniform-compact-scale} is at most
\(c^p/2\).  Hence
\begin{equation}
 \frac12c^p\kappa(R)
 \leq\kappa(\zeta R)
 \leq\left(C^p+\frac12c^p\right)\kappa(R).
 \label{eq:kappa-compact-scale}
\end{equation}
Taking \(\zeta=S/R\) yields
\begin{equation}
 \frac12c^p\kappa(R)
 \leq\kappa(S)
 \leq\left(C^p+\frac12c^p\right)\kappa(R).
 \label{eq:kappa-S-R-comparison}
\end{equation}
We next compare \(K=K(S)\) with \(j\).  Since
\(R=L_j\), \(L_K\leq S<L_{K+1}\), and \(L_k=2^kL_0\),
\begin{equation}
 2^{K-j}
 =\frac{L_K}{L_j}
 \leq\frac{S}{R}
 <\frac{L_{K+1}}{L_j}
 =2^{K-j+1}.
 \label{eq:active-index-ratio}
\end{equation}
Combining \eqref{eq:active-index-ratio} with
\(c\leq S/R\leq C\) gives
\begin{equation}
 K-j\leq\log_2C,
 \qquad
 j-K<1+\log_2(c^{-1}).
 \label{eq:active-index-two-sided}
\end{equation}
Therefore with
\begin{equation}
 h=h(c,C)
 :=1+\left\lceil
 \log_2\max\{C,c^{-1}\}
 \right\rceil,
 \label{eq:active-index-distance}
\end{equation}
we have
\(
 |K-j|\leq h.
\)
It remains to compare \(q_K\) with \(q_j\).  If \(K\geq j\), iterating
\(2^{-1/2}q_k\leq q_{k+1}\leq q_k\) from \(j\) to \(K\) gives
\(2^{-(K-j)/2}q_j\leq q_K\leq q_j.\)
If \(K<j\), iteration from \(K\) to \(j\) gives \(
 q_j\leq q_K\leq2^{(j-K)/2}q_j.\) 
Together with this bound, these two cases
imply
\(
 2^{-h/2}q_j\leq q_K\leq2^{h/2}q_j.
\)

Finally, using \(cR\leq S\leq CR\),
\eqref{eq:model-intensity-comparison} and
\eqref{eq:kappa-S-R-comparison}, we obtain
\begin{equation}
 S q_K\lambda(m,m_1)\geq\frac{\lambda_-}{2}\,
   c^{p+1}2^{-h/2}\,
   \kappa(R)R q_j,
 \label{eq:Phi-scale-lower-detailed}
\end{equation}
and
\begin{equation}
 S q_K\lambda(m,m_1)\leq \lambda_+C\,2^{h/2}
   \left(C^p+\frac12c^p\right)
   \kappa(R)R q_j.
 \label{eq:Phi-scale-upper-detailed}
\end{equation}
Thus \eqref{eq:Phi-scale-comparison} holds with
\begin{equation}
 C_1
 =\frac{\lambda_-}{2}c^{p+1}2^{-h/2},
 \qquad
 C_2
 =\lambda_+C\,2^{h/2}
 \left(C^p+\frac12c^p\right).
\end{equation}
\end{proof}

\begin{lemma}[Large-comparable versus large--bounded comparison]
\label{lem:large-pair-comparison}
For all sufficiently large \(j\), define
\begin{equation}
 \mathcal H_j
 =\{(x,x_1)\in X^2:
 L_j\leq M<L_{j+1},\ \vartheta>\vartheta_0\}, \qquad I_j=[\vartheta_0L_j,2L_j].
 \label{eq:comparable-shell}
\end{equation}
There is \(K_T<\infty\), independent of
\(N\), \(j\), and \(t\in[0,T]\), such that
\begin{align}
 &\int_{\mathcal H_j}\chi_N(S)E^\gamma
 \bigl(\mathcal A_\Phi(m,m_1)\bigr)_+
 f_N(t,x)f_N(t,x_1)\,dx_1\,dx
 \notag\\
 &\quad\leq K_T L_j^{\gamma-1}
 \int_{\substack{m\in I_j\\x_1\in\mathcal B_T}}
 \chi_N(S)E^\gamma
 \bigl[-\mathcal A_\Phi(m,m_1)\bigr]
 f_N(t,x)f_N(t,x_1)\,dx_1\,dx,
 \label{eq:large-pair-comparison}
\end{align}
where the integrand on the right-hand side is nonnegative.
\end{lemma}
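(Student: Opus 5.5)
The plan is to bound both sides of \eqref{eq:large-pair-comparison} by the same scale $\kappa(L_j)L_jq_j$ using \cref{prop:signed-redistribution} and \cref{lem:Phi-scale-comparison}. We then compare the collision frequencies by averaging over the reservoir velocities, which produces the factor $L_j^{\gamma-1}$ through mass conservation.

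\emph{Step 1: geometry and the signed increments.} On $\mathcal H_j$ we have $M\in[L_j,2L_j)$ and $\vartheta>\vartheta_0$. Hence $\min\{m,m_1\}\geq\vartheta_0 S\geq\vartheta_0L_j$, so both masses lie in $I_j$ and $S\in[L_j,4L_j)$. By \eqref{eq:general-positive} and \cref{lem:Phi-scale-comparison} with $(c,C)=(1,4)$, we get $(\mathcal A_\Phi)_+\leq C\kappa(L_j)L_jq_j$ on $\mathcal H_j$. On the right-hand side, take $m\in I_j$ and $x_1\in\mathcal B_T$. Then $S\in[\vartheta_0L_j,\,2L_j+R_T]\subset[\vartheta_0L_j,3L_j]$ and $\vartheta\leq R_T/(\vartheta_0L_j)\leq\vartheta_0$ for $j$ large. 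Also $S\geq L_1$ for $j$ large. So \eqref{eq:general-negative} and \cref{lem:Phi-scale-comparison} with $(c,C)=(\vartheta_0,3)$ give $-\mathcal A_\Phi\geq c\,\kappa(L_j)L_jq_j>0$. This also shows that the right-hand integrand is nonnegative. The common factor $\kappa(L_j)L_jq_j$ then cancels, and it remains to compare
\[
\int_{\mathcal H_j}\chi_N(S)E^\gamma f_Nf_N
\quad\text{with}\quad
L_j^{\gamma-1}\int_{m\in I_j,\ x_1\in\mathcal B_T}\chi_N(S)E^\gamma f_Nf_N .
\]

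\emph{Step 2: frequency comparison by averaging over the reservoir.} For a comparable pair in $\mathcal H_j$, $E\leq S|v-v_1|^2\leq4L_j|v-v_1|^2$. For any $w\in\R^d$ we have $|v-v_1|^{2\gamma}\leq|v-w|^{2\gamma}+|v_1-w|^{2\gamma}$. We average this inequality in $w$ against the probability measure $f_N(t,y)\one_{\mathcal B_T}(y)\,dy/\int_{\mathcal B_T}f_N$, writing $y=(m_y,w)$. The denominator is at least $\eta_T$ by \cref{cor:bounded-mass-set}.

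The cutoff must be handled with care. Take a pair $(x,x_1)\in\mathcal H_j$ and a reservoir point $y$. The new pair $(x,y)$ has total mass $m+m_y\leq m+R_T\leq m+\vartheta_0L_j\leq m+m_1$ once $j$ is large. Since $\chi$ is nonincreasing, this gives $\chi_N(m+m_1)\leq\chi_N(m+m_y)$, and symmetrically for $x_1$. For the reservoir pair we also have the lower bound $E(x,y)=\frac{mm_y}{m+m_y}|v-w|^2\geq\frac{r_T}{2}|v-w|^2$.

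Putting these together and using the symmetry of $\mathcal H_j$, we obtain
\[
\int_{\mathcal H_j}\chi_NE^\gamma f_Nf_N\leq\frac{C L_j^\gamma}{\eta_T r_T^\gamma}\Bigl(\int_{\{m\in I_j\}}f_N\Bigr)\int_{m\in I_j,\ y\in\mathcal B_T}\chi_N(m+m_y)E(x,y)^\gamma f_N(x)f_N(y).
\]
Finally, mass conservation \eqref{eq:truncated-moments} gives $\int_{\{m\in I_j\}}f_N\leq M_1(f_0)/(\vartheta_0L_j)$, which produces the factor $L_j^{\gamma-1}$. Collecting the constants, all of which depend only on $T$, $f_0$ and the kernel, yields $K_T$.

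\emph{Main obstacle.} The delicate point is the velocity degeneracy. The reservoir $\mathcal B_T$ has a positive number of particles, but their velocities could concentrate, so no pointwise lower bound $\int_{\mathcal B_T}|v-w|^{2\gamma}f_N\,dy\gtrsim1$ is available. The averaging trick in Step 2 avoids this entirely: the large--large frequency is dominated by large--reservoir frequencies themselves, not by a lower bound on them. The remaining bookkeeping is to check the monotonicity of $\chi_N$ in the total mass, which is what forces $j$ to be sufficiently large (namely $\vartheta_0L_j\geq R_T$).
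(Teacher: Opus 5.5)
Your proposal is correct and follows essentially the same route as the paper. Both arguments use the common scale $\kappa(L_j)L_jq_j$ from \cref{prop:signed-redistribution} and \cref{lem:Phi-scale-comparison} on both sides, the pointwise three-point energy comparison together with the monotonicity of $\chi_N$, averaging over the reservoir $\mathcal B_T$ normalized by $\eta_T$, factorization with symmetric relabelling, and mass conservation on $I_j$ to produce the factor $L_j^{-1}$.

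The only slip is that $|v-v_1|^{2\gamma}\leq|v-w|^{2\gamma}+|v_1-w|^{2\gamma}$ holds only for $\gamma\leq 1/2$. For $1/2<\gamma<1$ you need the factor $2^{2\gamma-1}$; the paper writes $2^{(2\gamma-1)_+}$. This constant is harmlessly absorbed into $K_T$.
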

\begin{proof}
Throughout the proof, \(K_T\) denotes a constant depending only on
\(T\) and the fixed data, independent of \(N\), \(j\), and
\(t\in[0,T]\).  Notice that it may be enlarged finitely many times in
the proof.

\medskip
\noindent\textbf{\underline{Step 1: localization of the shell and the positive increment.}}
If \((x,x_1)\in\mathcal H_j\), then
\begin{equation}
 \min\{m,m_1\}=\vartheta S>\vartheta_0S
 \geq\vartheta_0M\geq\vartheta_0L_j,
 \qquad
 M<L_{j+1}=2L_j.
\end{equation}
Thus \(\mathcal H_j\subset\{m,m_1\in I_j\}\) and
\(L_j\leq S<4L_j\).  By
\eqref{eq:general-positive} and
\cref{lem:Phi-scale-comparison} with \(c=1\), \(C=4\),
\begin{equation}
 \bigl(\mathcal A_\Phi(m,m_1)\bigr)_+
 \leq C_*\kappa(L_j)L_jq_j
 \qquad\text{on }\mathcal H_j.
 \label{eq:large-comparable-positive-size}
\end{equation}
Here \(C_*>0\) is independent of \(N\), \(j\), and
\(t\in[0,T]\).

\medskip
\noindent\textbf{\underline{Step 2: pointwise comparison of collision frequencies.}}
Write \(x=(m,v)\), \(x_1=(m_1,v_1)\), and
\(z=(\mu,w)\in\mathcal B_T\).  Once
\(\vartheta_0L_j\geq R_T\), the bounds
\(m,m_1\in I_j\) imply
\begin{align}
 E(x,x_1)^\gamma
 &\leq(2L_j)^\gamma\abs{v-v_1}^{2\gamma},
 \label{eq:high-high-energy-upper}\\
 E(x,z)^\gamma
 &\geq\left(\frac{r_T}{2}\right)^\gamma
 \abs{v-w}^{2\gamma},
 \qquad
 E(x_1,z)^\gamma
 \geq\left(\frac{r_T}{2}\right)^\gamma
 \abs{v_1-w}^{2\gamma}.
 \label{eq:high-bounded-energy-lower}
\end{align}
Indeed, \(m\geq\vartheta_0L_j\geq R_T\geq\mu\) gives
\(
 {m\mu}/{(m+\mu)}\geq{\mu}/{2}\geq{r_T}/{2}.
\)
Since
\begin{equation}
 \abs{v-v_1}^{2\gamma}
 \leq2^{(2\gamma-1)_+}
 \bigl(\abs{v-w}^{2\gamma}+\abs{v_1-w}^{2\gamma}\bigr),
 \label{eq:three-velocity-inequality}
\end{equation}
\eqref{eq:high-high-energy-upper}--
\eqref{eq:three-velocity-inequality} give
\begin{equation}
 E(x,x_1)^\gamma
 \leq K_TL_j^\gamma
 \bigl(E(x,z)^\gamma+E(x_1,z)^\gamma\bigr).
 \label{eq:three-point-energy-comparison}
\end{equation}

Since \(m,m_1\geq\vartheta_0L_j\geq R_T\geq\mu\) and \(\chi_N\) is
nonincreasing,
\(
 \chi_N(m+m_1)\leq\chi_N(m+\mu),
 \chi_N(m+m_1)\leq\chi_N(m_1+\mu).
\)
Consequently,
\begin{align}
 \chi_N(m+m_1)E(x,x_1)^\gamma
 \leq K_TL_j^\gamma
 \bigl[
 \chi_N(m+\mu)E(x,z)^\gamma
 +\chi_N(m_1+\mu)E(x_1,z)^\gamma
 \bigr].
 \label{eq:pointwise-collision-comparison}
\end{align}

\medskip
\noindent\textbf{\underline{Step 3: averaging over bounded collision partners.}}
Fix \(x=(m,v)\) and \(x_1=(m_1,v_1)\) with
\(m,m_1\in I_j\).  Since the left-hand side of
\eqref{eq:pointwise-collision-comparison} is nonnegative, multiplying
\eqref{eq:pointwise-collision-comparison} by
\(f_N(t,z)/\eta_T\) and integrating over \(z\in\mathcal B_T\),
\cref{cor:bounded-mass-set} gives
\begin{align}
 &\chi_N(m+m_1)E(x,x_1)^\gamma
 \leq
 \frac{\chi_N(m+m_1)E(x,x_1)^\gamma}{\eta_T}
 \int_{\mathcal B_T}f_N(t,z)\,dz
 \notag\\
 &\quad\leq
 \frac{K_TL_j^\gamma}{\eta_T}
 \int_{\mathcal B_T}
 \Bigl[
   \chi_N(m+\mu)E(x,z)^\gamma
   +\chi_N(m_1+\mu)E(x_1,z)^\gamma
 \Bigr]f_N(t,z)\,dz.
 \label{eq:averaged-collision-comparison}
\end{align}
We now multiply this inequality by
\(f_N(t,x)f_N(t,x_1)\) and integrate over all
\(x,x_1\) whose mass components belong to \(I_j\).  Since every
integrand is nonnegative, Tonelli's theorem gives
\begin{align}
 &\int_{\{m,m_1\in I_j\}}
 \chi_N(S)E(x,x_1)^\gamma
 f_N(t,x)f_N(t,x_1)\,dx_1\,dx
 \notag\\
 &\quad\leq
 \frac{K_TL_j^\gamma}{\eta_T}
 \int_{\substack{m,m_1\in I_j\\z\in\mathcal B_T}}
 \chi_N(m+\mu)E(x,z)^\gamma
 f_N(t,x)f_N(t,x_1)f_N(t,z)\,dz\,dx_1\,dx
 \notag\\
 &\qquad+
 \frac{K_TL_j^\gamma}{\eta_T}
 \int_{\substack{m,m_1\in I_j\\z\in\mathcal B_T}}
 \chi_N(m_1+\mu)E(x_1,z)^\gamma
 f_N(t,x)f_N(t,x_1)f_N(t,z)\,dz\,dx_1\,dx.
 \label{eq:threefold-collision-comparison}
\end{align}
In the first integral on the right-hand side, the integrand involving
\(x\) and \(z\) is independent of \(x_1\).  Hence it factors as
\begin{align}
 &\int_{\substack{m,m_1\in I_j\\z\in\mathcal B_T}}
 \chi_N(m+\mu)E(x,z)^\gamma
 f_N(t,x)f_N(t,x_1)f_N(t,z)\,dz\,dx_1\,dx
 \notag\\
 &\quad=
 \left(\int_{\{m_1\in I_j\}}f_N(t,x_1)\,dx_1\right)
 \int_{\substack{m\in I_j\\z\in\mathcal B_T}}
 \chi_N(m+\mu)E(x,z)^\gamma
 f_N(t,x)f_N(t,z)\,dz\,dx.
 \label{eq:first-threefold-factorization}
\end{align}
Similarly, the second integral in
\eqref{eq:threefold-collision-comparison} factors as
\begin{align}
 &\left(\int_{\{m\in I_j\}}f_N(t,x)\,dx\right)
 \int_{\substack{m_1\in I_j\\z\in\mathcal B_T}}
 \chi_N(m_1+\mu)E(x_1,z)^\gamma
 f_N(t,x_1)f_N(t,z)\,dz\,dx_1.
\end{align}
Relabelling \(x_1\) as \(x\) in the last integral shows that the two
terms are equal.  Consequently,
\begin{align}
 &\int_{\{m,m_1\in I_j\}}\chi_N(S)E(x,x_1)^\gamma
 f_N(t,x)f_N(t,x_1)\,dx_1\,dx
 \notag\\
 &\quad\leq
 \frac{2K_TL_j^\gamma}{\eta_T}
 \left(\int_{\{m\in I_j\}}f_N(t,x)\,dx\right)
 \int_{\substack{m\in I_j\\z\in\mathcal B_T}}
 \chi_N(m+\mu)E(x,z)^\gamma
 f_N(t,x)f_N(t,z)\,dz\,dx.
 \label{eq:integrated-collision-comparison}
\end{align}
Besides, mass conservation gives
\begin{equation}
 \int_{\{m\in I_j\}}f_N(t,x)\,dx
 \leq\frac{M_1(f_0)}{\vartheta_0L_j}.
 \label{eq:large-interval-number-bound}
\end{equation}
Therefore
\begin{align}
 &\int_{\{m,m_1\in I_j\}}\chi_N(S)E^\gamma
 f_N(t,x)f_N(t,x_1)\,dx_1\,dx
 \notag\\
 &\quad\leq K_TL_j^{\gamma-1}
 \int_{\substack{m\in I_j\\x_1\in\mathcal B_T}}
 \chi_N(S)E^\gamma
 f_N(t,x)f_N(t,x_1)\,dx_1\,dx.
 \label{eq:collision-frequency-comparison}
\end{align}

\medskip
\noindent\textbf{\underline{Step 4: the negative increment and conclusion.}}
It remains to compare the signed increments
\(\mathcal A_\Phi(m,m_1)\).  If
\(m\in I_j\) and \(x_1\in\mathcal B_T\), then, for all sufficiently
large \(j\),
\(
 S=m+m_1\in[\vartheta_0L_j,3L_j]
\)
and
\(
 \vartheta
 \leq{R_T}/(\vartheta_0L_j+R_T)\leq\vartheta_0.
\)
Thus \eqref{eq:general-negative} and
\cref{lem:Phi-scale-comparison} with \(c=\vartheta_0\), \(C=3\) give
a constant \(c_*>0\), independent of \(N\), \(j\), and
\(t\in[0,T]\), such that
\begin{equation}
 -\mathcal A_\Phi(m,m_1)
 \geq c_*\kappa(L_j)L_jq_j
 \qquad
 \text{for }m\in I_j,\ x_1\in\mathcal B_T.
 \label{eq:large-bounded-negative-size}
\end{equation}
In particular, the integrand on the right-hand side of
\eqref{eq:large-pair-comparison} is nonnegative.  Combining
\eqref{eq:large-comparable-positive-size},
\eqref{eq:collision-frequency-comparison}, and
\eqref{eq:large-bounded-negative-size}, and enlarging \(K_T\) to
absorb \(C_*/c_*\), yields
\begin{align}
 &\int_{\mathcal H_j}\chi_N(S)E^\gamma
 \bigl(\mathcal A_\Phi(m,m_1)\bigr)_+
 f_N(t,x)f_N(t,x_1)\,dx_1\,dx
 \notag\\
 &\quad\leq
 C_*\kappa(L_j)L_jq_j
 \int_{\{m,m_1\in I_j\}}\chi_N(S)E^\gamma
 f_N(t,x)f_N(t,x_1)\,dx_1\,dx
 \notag\\
 &\quad\leq
 K_TL_j^{\gamma-1}\kappa(L_j)L_jq_j
 \int_{\substack{m\in I_j\\x_1\in\mathcal B_T}}
 \chi_N(S)E^\gamma
 f_N(t,x)f_N(t,x_1)\,dx_1\,dx
 \notag\\
 &\quad\leq
 K_TL_j^{\gamma-1}
 \int_{\substack{m\in I_j\\x_1\in\mathcal B_T}}
 \chi_N(S)E^\gamma
 \bigl[-\mathcal A_\Phi(m,m_1)\bigr]
 f_N(t,x)f_N(t,x_1)\,dx_1\,dx.
 \label{eq:large-pair-comparison-expanded}
\end{align}
This is \eqref{eq:large-pair-comparison}.
\end{proof}

In the following estimates of Class IV, we need to divide $\Omega_{\rm IV}$ into the disjoint shells $\mathcal H_j$ in \cref{lem:large-pair-comparison}. Although \(\mathcal H_j\) are disjoint, the comparison
regions \(I_j\times\mathcal B_T\) appearing on the right-hand side of
\eqref{eq:large-pair-comparison} may overlap.  Thus, when the
shellwise estimates are summed, the same large--bounded collision may
be used for several different shells.  Luckily, since 
\(m\in I_j\) implies 
\(\log_2({m}/{L_0})-1
 \leq j\leq
 \log_2({m}/{\vartheta_0L_0}),
\)
\begin{equation}
 \sum_{j=0}^{\infty}\one_{\{m\in I_j\}}
 \leq2+\left\lceil\log_2\frac2{\vartheta_0}\right\rceil.
 \label{eq:comparison-interval-overlap}
\end{equation}
Hence each fixed mass belongs to
only finitely many of the intervals \(I_j\). Consequently, the sum of all
comparison integrals can later be controlled by a fixed multiple of
one retained negative contribution from Class II.   

Fix a constant \(K_T\) for which
\eqref{eq:large-pair-comparison} holds for every sufficiently large
\(j\).  We can now specify the choice of \(J\). Choose \(J=J(T)\) so
large that \cref{lem:large-pair-comparison} holds for every
\(j\geq J\) and
\begin{align}
 \vartheta_0L_J&\geq\max\{R_T,L_1\},\quad
 \frac{R_T}{\vartheta_0L_J+R_T}\leq\vartheta_0,
 \quad K_T\left(2+\left\lceil\log_2\frac2{\vartheta_0}\right\rceil\right)
 L_J^{\gamma-1}\leq\frac12.
 \label{eq:J-absorption-condition}
\end{align}
Such a choice is possible because \(L_J\to\infty\) and
\(\gamma<1\).

Now we estimate the four areas listed in
\cref{tab:four-Phi-classes} respectively.  In the estimates below,
\(C_T\) may change from line to line and is independent of \(N\) and
\(t\in[0,T]\).

\noindent\textbf{\underline{Class I: bounded incoming masses.}}
If \((x,x_1)\in\Omega_{\mathrm I}\), then
\(S<2\vartheta_0L_J\).  From
\eqref{eq:signed-share-identity}, \(D_\alpha\geq0\), and
\(D_\vartheta(S)\leq\Phi(S)\),
\begin{align}
 \bigl(\mathcal A_\Phi(m,m_1)\bigr)_+
 \leq\lambda(m,m_1)D_\vartheta(S)
 \leq\lambda_+\kappa_+(1+2\vartheta_0L_J)
 \Phi(2\vartheta_0L_J)
 \leq C_T.
 \label{eq:class-I-pointwise}
\end{align}
Since \(\chi_N\leq1\) and \eqref{eq:unweighted-rate} holds uniformly
in \(N\),
\begin{align}
 &\int_{\Omega_{\mathrm I}}\chi_N(S)E^\gamma
 \bigl(\mathcal A_\Phi(m,m_1)\bigr)_+
 f_N(t,x)f_N(t,x_1)\,dx_1\,dx
 \notag\\
 &\quad\leq C_T\int_{X^2}E^\gamma
 f_N(t,x)f_N(t,x_1)\,dx_1\,dx
 \leq C_T2^\gamma
 M_0(f_0)^{2-\gamma}M_2(f_0)^\gamma.
 \label{eq:class-I-bound}
\end{align}
\noindent\textbf{\underline{Class II: unequal incoming masses.}}
On \(\Omega_{\mathrm {II}}\), one has
\(S\geq\vartheta_0L_J\geq L_1\) and
\(\vartheta\leq\vartheta_0\).  Hence
\eqref{eq:general-negative} gives
\begin{equation}
 \mathcal A_\Phi(m,m_1)
 \leq-\frac{G_1}{4}S q_{K(S)}
 \lambda(m,m_1)\leq0.
 \label{eq:class-II-negative}
\end{equation}
The Class IV comparison uses part of this negative contribution
where the first particle has mass at least \(\vartheta_0L_J\) and the
second belongs to \(\mathcal B_T\).  Denote its nonnegative magnitude by
\begin{align}
 \mathcal D_N(t)
 :=\int_{\substack{m\geq\vartheta_0L_J\\x_1\in\mathcal B_T}}
 \chi_N(S)E^\gamma
 \bigl[-\mathcal A_\Phi(m,m_1)\bigr]
 f_N(t,x)f_N(t,x_1)\,dx_1\,dx.
 \label{eq:retained-negative-definition}
\end{align}
For each fixed \(N\), the quantity in
\eqref{eq:retained-negative-definition} is finite.  Indeed,
\(\chi_N(S)\neq0\) implies \(S<2N\), and
\begin{equation}
 \chi_N(S)\abs{\mathcal A_\Phi(m,m_1)}
 \leq
 4\lambda_+\kappa_+(1+2N)
 \sup_{0\leq r\leq2N}\Phi(r).
 \label{eq:retained-negative-finiteness}
\end{equation}
Thus \(\mathcal D_N(t)<\infty\) by
\eqref{eq:unweighted-rate}.
Moreover, if \(m\geq\vartheta_0L_J\) and
\(x_1\in\mathcal B_T\), then
\(m_1\leq R_T\leq m\) and
\(
 \vartheta
 \leq{R_T}/({\vartheta_0L_J+R_T})
 \leq\vartheta_0.\)
Thus the integration region in
\eqref{eq:retained-negative-definition} is contained in
\(\Omega_{\mathrm {II}}\), and
\begin{align}
 &\int_{\Omega_{\mathrm {II}}}\chi_N(S)E^\gamma
 \mathcal A_\Phi(m,m_1)
 f_N(t,x)f_N(t,x_1)\,dx_1\,dx
 \leq -\mathcal D_N(t).
 \label{eq:class-II-bound}
\end{align}

\noindent\textbf{\underline{Class III: bounded comparable masses.}}
On \(\Omega_{\mathrm {III}}\), the sign of
\(\mathcal A_\Phi\) is not fixed, but \(M<L_J\) and hence
\(S<2L_J\).  As in \eqref{eq:class-I-pointwise},
\begin{equation}
 \bigl(\mathcal A_\Phi(m,m_1)\bigr)_+
 \leq\lambda_+\kappa_+(1+2L_J)\Phi(2L_J)
 \leq C_T.
 \label{eq:class-III-pointwise}
\end{equation}
Therefore
\begin{align}
 &\int_{\Omega_{\mathrm {III}}}\chi_N(S)E^\gamma
 \mathcal A_\Phi(m,m_1)
 f_N(t,x)f_N(t,x_1)\,dx_1\,dx
 \notag\\
 &\quad\leq
 C_T\int_{X^2}E^\gamma
 f_N(t,x)f_N(t,x_1)\,dx_1\,dx
 \leq C_T2^\gamma
 M_0(f_0)^{2-\gamma}M_2(f_0)^\gamma.
 \label{eq:class-III-bound}
\end{align}
\noindent\textbf{\underline{Class IV: large comparable masses.}}
Since the shells \(\mathcal H_j\) are defined by
\(L_j\leq M<L_{j+1}\), they are pairwise disjoint and
\begin{equation*}
 \displaystyle \Omega_{\mathrm {IV}}
 =\bigsqcup_{j=J}^{\infty}\mathcal H_j.
\end{equation*}
We may therefore sum the positive contribution shell by shell.
Applying \cref{lem:large-pair-comparison} on each shell gives
\begin{align}
 &\int_{\Omega_{\mathrm {IV}}}\chi_N(S)E^\gamma
 \mathcal A_\Phi(m,m_1)
 f_N(t,x)f_N(t,x_1)\,dx_1\,dx
 \notag\\
 &\quad\leq
 \sum_{j=J}^{\infty}
 \int_{\mathcal H_j}\chi_N(S)E^\gamma
 \bigl(\mathcal A_\Phi(m,m_1)\bigr)_+
 f_N(t,x)f_N(t,x_1)\,dx_1\,dx
 \notag\\
 &\quad\leq K_T\sum_{j=J}^{\infty}L_j^{\gamma-1}
 \int_{\substack{m\in I_j\\x_1\in\mathcal B_T}}
 \chi_N(S)E^\gamma
 \bigl[-\mathcal A_\Phi(m,m_1)\bigr]
 f_N(t,x)f_N(t,x_1)\,dx_1\,dx.
 \label{eq:class-IV-shell-bound}
\end{align}
Because \(\gamma<1\), one has
\(L_j^{\gamma-1}\leq L_J^{\gamma-1}\) for every \(j\geq J\).
Moreover, \(I_j=[\vartheta_0L_j,2L_j]\) and
\(L_{j+1}=2L_j\), so consecutive intervals overlap and
\begin{equation}
 \bigcup_{j\geq J}I_j=[\vartheta_0L_J,\infty).
 \label{eq:comparison-interval-union}
\end{equation}
The integrand in the last line of
\eqref{eq:class-IV-shell-bound} is nonnegative.  Hence Tonelli's
theorem, \eqref{eq:comparison-interval-overlap}, and
\eqref{eq:comparison-interval-union} give
\begin{align}
 &\sum_{j=J}^{\infty}
 \int_{\substack{m\in I_j\\x_1\in\mathcal B_T}}
 \chi_N(S)E^\gamma
 \bigl[-\mathcal A_\Phi(m,m_1)\bigr]
 f_N(t,x)f_N(t,x_1)\,dx_1\,dx
 \notag\\
 &\qquad=\int_{\substack{m\geq\vartheta_0L_J\\x_1\in\mathcal B_T}}
 \left(\sum_{j=J}^{\infty}\one_{\{m\in I_j\}}\right)
 \chi_N(S)E^\gamma
 \bigl[-\mathcal A_\Phi(m,m_1)\bigr]
 f_N(t,x)f_N(t,x_1)\,dx_1\,dx
 \notag\\
 &\qquad\leq
 \left(2+\left\lceil\log_2\frac2{\vartheta_0}\right\rceil\right)
 \mathcal D_N(t).
 \label{eq:class-IV-overlap-sum}
\end{align}
Combining the preceding two estimates with
\eqref{eq:J-absorption-condition}, we obtain
\begin{align}
 &\int_{\Omega_{\mathrm {IV}}}\chi_N(S)E^\gamma
 \mathcal A_\Phi(m,m_1)
 f_N(t,x)f_N(t,x_1)\,dx_1\,dx
 \notag\\
 &\quad\leq
 K_T\left(2+\left\lceil\log_2\frac2{\vartheta_0}\right\rceil\right)
 L_J^{\gamma-1}\mathcal D_N(t)
 \leq\frac12\mathcal D_N(t).
 \label{eq:class-IV-bound}
\end{align}

The four classes can now be combined in the derivative identity.
\begin{proposition}[Uniform bound for the signed
\(\Phi\)-production]
\label{prop:signed-Phi-production}
For every \(T>0\), there is \(C_T<\infty\), independent of
\(N\) and \(t\in[0,T]\), such that
\begin{equation}
 \int_{X^2}\chi_N(S)E^\gamma\mathcal A_\Phi(m,m_1)
 f_N(t,x)f_N(t,x_1)\,dx_1\,dx
 \leq C_T.
 \label{eq:signed-Phi-production-bound}
\end{equation}
\end{proposition}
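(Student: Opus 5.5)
The plan is to split the signed collision integral in \eqref{eq:signed-Phi-production-bound} along the four-class decomposition $X^2=\Omega_{\mathrm I}\cup\Omega_{\mathrm{II}}\cup\Omega_{\mathrm{III}}\cup\Omega_{\mathrm{IV}}$ and add the class estimates already obtained.

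First I check that the four classes are pairwise disjoint and cover $X^2$. If $M<\vartheta_0L_J$ the pair lies in Class I. Otherwise $M\ge\vartheta_0L_J$, and then either $\vartheta\le\vartheta_0$ (Class II) or $\vartheta>\vartheta_0$, which gives Class III or IV according as $M<L_J$ or $M\ge L_J$.

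Next I justify splitting the integral. For fixed $N$ the integrand is absolutely integrable: on $\{\chi_N(S)\ne0\}$ one has $S<2N$, so $|\mathcal A_\Phi|$ is bounded by \eqref{eq:active-Phi-bound}, and $E^\gamma f_Nf_N$ is integrable by \eqref{eq:unweighted-rate}. Hence the integral equals the sum of the four class integrals, each finite. This finiteness matters because the Class II bound is a negative quantity that will be used for absorption, and we must not subtract infinities.

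For the estimates themselves:
\begin{itemize}
\item Classes I and III are each bounded above by $C_T2^\gamma M_0(f_0)^{2-\gamma}M_2(f_0)^\gamma$, using \eqref{eq:class-I-bound} and \eqref{eq:class-III-bound}. For Class III, \eqref{eq:class-III-pointwise} bounds the integrand from above by its positive part.
\item Class II is bounded above by $-\mathcal D_N(t)$, by \eqref{eq:class-II-bound}.
\item Class IV is bounded above by $\tfrac12\mathcal D_N(t)$, by \eqref{eq:class-IV-bound}. This uses the choice of $J$ in \eqref{eq:J-absorption-condition}, \cref{lem:large-pair-comparison} on each shell $\mathcal H_j$ with $j\ge J$, and the overlap count \eqref{eq:comparison-interval-overlap}.
\end{itemize}

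Summing the four bounds gives
\[
\int_{X^2}\chi_N(S)E^\gamma\mathcal A_\Phi f_Nf_N\,dx_1\,dx\le 2C_T2^\gamma M_0(f_0)^{2-\gamma}M_2(f_0)^\gamma-\tfrac12\mathcal D_N(t)\le C_T.
\]
The last step uses $\mathcal D_N(t)\ge0$ and $\mathcal D_N(t)<\infty$, the latter by \eqref{eq:retained-negative-finiteness}. The constant is independent of $N$ and $t\in[0,T]$: $J$, $r_T$, $R_T$ and $\eta_T$ depend only on $T$ and the data, and the moment bounds \eqref{eq:truncated-moments} are uniform. If $M_0(f_0)=0$ the integrand vanishes identically and the statement is trivial.

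The main obstacle is the bookkeeping in Class IV. The shellwise right-hand sides all draw from the single retained quantity $\mathcal D_N(t)$, so one must confirm that the regions $\{m\in I_j\}\times\mathcal B_T$ with $j\ge J$ all lie inside the region defining $\mathcal D_N(t)$, and that this region sits inside $\Omega_{\mathrm{II}}$. Both follow from the first two conditions in \eqref{eq:J-absorption-condition}. One must also confirm that the finite overlap multiplicity, not a sum over $j$, is what multiplies $\mathcal D_N$. With those checks in place, absorption into the Class II term closes the estimate.
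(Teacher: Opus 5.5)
Your proposal is correct and follows the paper's proof: the paper also sums the four class bounds \eqref{eq:class-I-bound}, \eqref{eq:class-II-bound}, \eqref{eq:class-III-bound}, \eqref{eq:class-IV-bound} to obtain $C_T-\mathcal D_N(t)+C_T+\tfrac12\mathcal D_N(t)\le 2C_T$. Your extra checks are sound: that the classes are disjoint and cover $X^2$, that the integrand is absolutely integrable for fixed $N$ so the splitting is legitimate, and that $\mathcal D_N(t)<\infty$; the paper leaves the first two implicit and notes the third in passing.
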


\begin{proof}
The partition
\eqref{eq:class-I}--\eqref{eq:class-IV} and
\eqref{eq:class-I-bound}, \eqref{eq:class-II-bound},
\eqref{eq:class-III-bound}, and \eqref{eq:class-IV-bound} give
\begin{align}
 \int_{X^2}\chi_N(S)E^\gamma\mathcal A_\Phi
 f_N(t,x)f_N(t,x_1)\,dx_1\,dx\leq C_T-\mathcal D_N(t)+C_T
 +\frac12\mathcal D_N(t)
 \leq2C_T.
 \label{eq:four-class-sum}
\end{align}
Renaming \(2C_T\) as \(C_T\) proves
\eqref{eq:signed-Phi-production-bound}.
\end{proof}

\begin{proof}[Proof of \cref{thm:Phi-propagation}]
By \cref{lem:Phi-evolution,prop:signed-Phi-production}, for almost every
\(t\in[0,T]\),
\begin{equation*}
 \frac{d}{dt}\int_X\Phi(m)f_N(t,x)\,dx
 \leq\frac12\norm{b}_{L^1(\Sph)}C_T.
\end{equation*}
Integrating from \(0\) to \(t\leq T\) gives
\begin{equation*}
 \int_X\Phi(m)f_N(t,x)\,dx
 \leq\int_X\Phi(m)f_0(x)\,dx
 +\frac{T}{2}\norm{b}_{L^1}C_T.
\end{equation*}
Taking the supremum over \(N\geq1\) and \(0\leq t\leq T\), and
absorbing the fixed factor into \(C_T\), proves
\eqref{eq:Phi-propagation}.
\end{proof}

\begin{proof}[Proof of \cref{prop:no-gelation}]
If \(M_0(f_0)=0\), then \(f_N\equiv0\), and the assertion is
immediate.  Assume \(M_0(f_0)>0\).  Since
\(m\mapsto\Phi(m)/m\) is nondecreasing,
\begin{align}
 \int_{\{m>R\}}m f_N(t,x)\,dx
 &\leq
 \left(\inf_{m>R}\frac{\Phi(m)}m\right)^{-1}
 \int_X\Phi(m)f_N(t,x)\,dx.
 \label{eq:Phi-tail-control}
\end{align}
By \cref{thm:Phi-propagation}, the last \(\Phi\)-moment is bounded
uniformly for \(N\geq1\) and \(0\leq t\leq T\), while
\begin{equation*}
 \inf_{m>R}\frac{\Phi(m)}m\longrightarrow\infty
 \qquad\text{as }R\to\infty
\end{equation*}
by \eqref{eq:Phi-superlinear}.  Taking the required suprema in
\eqref{eq:Phi-tail-control} and then letting \(R\to\infty\) gives
\begin{equation}
 \lim_{R\to\infty}\sup_{N\geq1}\sup_{0\leq t\leq T}
 \int_{\{m>R\}}m f_N(t,x)\,dx=0.
\end{equation}
This proves \cref{prop:no-gelation}.
\end{proof}

\section{Passage to the Limit and Global Existence}
\label{sec:passage-limit}
\subsection{Collision-rate tightness and narrow compactness}

\begin{lemma}[Localized collision-rate bound]
\label{lem:localized-collision-rate}
Let \(\mu\) be a finite nonnegative Borel measure on \(X\) with finite
\(M_0(\mu),M_1(\mu),M_2(\mu)\).  For every Borel set \(A\subset X\),
\begin{align}
 &\int_{A\times X}\int_0^1\int_{\Sph}
 a(m,m_1,\alpha)E^\gamma
 b\left(\frac{u}{\abs u}\cdot\omega\right)
 \,d\omega\,d\alpha\,d\mu(x_1)\,d\mu(x)
 \notag\\
 &\quad\leq a_+\norm b_{L^1(\Sph)}
 \Biggl[
 \bigl(M_0(\mu)\mu(A)\bigr)^{1-\gamma}
 \bigl(2M_0(\mu)M_2(\mu)\bigr)^\gamma
 \notag\\[-1mm]
 &\hspace{43mm}+
 \left(M_0(\mu)\int_A m\,d\mu+M_1(\mu)\mu(A)\right)^{1-\gamma}
 \bigl(2M_1(\mu)M_2(\mu)\bigr)^\gamma
 \Biggr].
 \label{eq:localized-collision-rate}
\end{align}
The same estimate holds with \(a_N\) in place of \(a\).
\end{lemma}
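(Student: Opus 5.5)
The plan is to adapt the proof of \cref{lem:joint-rate}, now with the outer variable $x$ restricted to $A$. First integrate out $\alpha$ and $\omega$: by \eqref{eq:model-linear-upper} and \eqref{eq:model-angular-cutoff}, the left-hand side of \eqref{eq:localized-collision-rate} is at most
\[
 a_+\norm b_{L^1(\Sph)}\int_{A\times X}(1+S)E^\gamma\,d\mu(x_1)\,d\mu(x).
\]
Since $0\le a_N\le a$ by \eqref{eq:cutoff-domination}, the same reduction covers $a_N$. It then suffices to bound the unweighted part $\int_{A\times X}E^\gamma$ and the weighted part $\int_{A\times X}SE^\gamma$ separately.

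\textbf{Unweighted part.} Apply H\"older on $A\times X$ with exponents $1/(1-\gamma)$ and $1/\gamma$, writing $E^\gamma=1^{1-\gamma}E^\gamma$. The first factor is $(\mu(A)M_0(\mu))^{1-\gamma}$. For the second, use $E\le m|v|^2+m_1|v_1|^2$ from \eqref{eq:model-reduced-energy} and extend the $x$-integration from $A$ to all of $X$, which is allowed because the integrand is nonnegative. This gives $\int_{A\times X}E\le 2M_0(\mu)M_2(\mu)$, and hence the first bracketed term.

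\textbf{Weighted part.} Write $SE^\gamma=S^{1-\gamma}(SE)^\gamma$ and apply H\"older again. The first factor is
\[
 \int_{A\times X}S\,d\mu\,d\mu=M_0(\mu)\int_A m\,d\mu+M_1(\mu)\mu(A),
\]
computed directly from $S=m+m_1$. For the second factor, note that $SE=mm_1|v-v_1|^2\le 2mm_1(|v|^2+|v_1|^2)$. Enlarging $A$ to $X$ (Tonelli, since the integrand is nonnegative) gives $\int_{A\times X}SE\le 4M_1(\mu)M_2(\mu)$. The target bound is the sharper $2M_1M_2$. One way to get it is to use the exact identity \eqref{eq:joint-rate-identities} on $X\times X$ together with $|P|^2\ge0$: since the integrand $SE$ is nonnegative, $\int_{A\times X}SE\le\int_{X^2}SE=2(M_1M_2-|P|^2)\le 2M_1M_2$.

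Summing the two parts yields \eqref{eq:localized-collision-rate}. The only point needing care is this last one: the restriction to $A$ has to be removed before invoking the exact identity rather than after, so that cross terms never need to be handled on $A\times X$. Measurability and finiteness follow as in \cref{lem:joint-rate}, and the measure-valued case is handled verbatim. No step here is a genuine obstacle.
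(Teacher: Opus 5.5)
Your proposal is correct and follows the paper's proof essentially step for step. It uses the same reduction via the linear bound on $a$ and the angular cutoff, the same two H\"older splittings ($E^\gamma$ against $1$, and $SE^\gamma=S^{1-\gamma}(SE)^\gamma$), and the same device of enlarging $A\times X$ to $X^2$ before invoking \eqref{eq:joint-rate-identities} to obtain $2M_1(\mu)M_2(\mu)$; the detour through the cruder bound $4M_1M_2$ is unnecessary but harmless.
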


\begin{proof}
By \eqref{eq:model-linear-upper}, the left-hand side is bounded by
\begin{equation*}
 a_+\norm b_{L^1(\Sph)}
 \int_{A\times X}(1+S)E^\gamma\,d\mu(x_1)\,d\mu(x).
\end{equation*}
For the part without \(S\), H\"older's inequality on \(A\times X\)
gives
\begin{equation}
 \int_{A\times X}E^\gamma\,d\mu\,d\mu_1
 \leq\bigl(M_0(\mu)\mu(A)\bigr)^{1-\gamma}
 \left(\int_{A\times X}E\,d\mu\,d\mu_1\right)^\gamma,
\end{equation}
where the last integral is at most \(2M_0(\mu)M_2(\mu)\), by
\eqref{eq:model-reduced-energy}.  For the part with \(S\), write
\(SE^\gamma=S^{1-\gamma}(SE)^\gamma\).  A second application of
H\"older's inequality gives
\begin{align*}
 \int_{A\times X}SE^\gamma\,d\mu\,d\mu_1
 &\leq
 \left(\int_{A\times X}S\,d\mu\,d\mu_1\right)^{1-\gamma}
 \left(\int_{A\times X}SE\,d\mu\,d\mu_1\right)^\gamma\\
 &\leq
 \left(M_0(\mu)\int_A m\,d\mu+M_1(\mu)\mu(A)\right)^{1-\gamma}
 \bigl(2M_1(\mu)M_2(\mu)\bigr)^\gamma.
\end{align*}
Here the first factor was obtained by writing \(S=m+m_1\), while the
second was enlarged to \(X^2\) and then estimated by
\eqref{eq:joint-rate-identities}.  This proves
\eqref{eq:localized-collision-rate}.  The assertion for \(a_N\)
follows from \eqref{eq:cutoff-domination}.
\end{proof}

We denote by \(\mathcal M_+(X)\) the space of finite nonnegative Radon
measures on \(X\), endowed with the narrow topology. For \(T>0\), convergence
\(\mu_n\to\mu\) in \(C([0,T];\mathcal M_+(X)\text{-narrow})\)
means that
\begin{equation*}
 \sup_{0\leq t\leq T}
 \left|
 \int_X\phi\,d\mu_n(t)-\int_X\phi\,d\mu(t)
 \right|
 \longrightarrow0
 \qquad\text{for every }\phi\in C_b(X).
\end{equation*}
In this section, we regard \(f_N(t,x)\,dx\in\mathcal M_+(X)\) as a finite nonnegative Radon
measure and write it as \(\mu_N(t)\). 
For every Borel set \(D\subset X^2\), define
\begin{align}
 \mathcal R_N(D,t)
 :=\int_D\int_0^1\int_{\Sph}
 &a_NE^\gamma b\left(\frac{u}{\abs u}\cdot\omega\right)
 \,d\omega\,d\alpha\,
 d\mu_N(t)(x_1)\,d\mu_N(t)(x).
 \label{eq:truncated-collision-rate}
\end{align}
For a curve
\(\mu=(\mu_t)_{t\geq0}\in
C([0,\infty);\mathcal M_+(X)\text{-narrow})\), define
\begin{align}
 \mathcal R(D,t)
 :=\int_D\int_0^1\int_{\Sph}
 &a(m,m_1,\alpha)E(x,x_1)^\gamma
 b\left(\frac{u}{\abs u}\cdot\omega\right)
 \,d\omega\,d\alpha\,
 d\mu_t(x_1)\,d\mu_t(x).
 \label{eq:limit-collision-rate}
\end{align}
Finally, for \(0<r<R<\infty\), set
\begin{equation}
 K_{r,R}
 :=\{(m,v)\in X:r\leq m\leq R,\ \abs v\leq R\}.
\end{equation}
Thus
\begin{equation*}
 X^2\setminus K_{r,R}^2
 =
 \{(x,x_1)\in X^2:x\notin K_{r,R}
 \text{ or }x_1\notin K_{r,R}\}.
\end{equation*}

\begin{proposition}[Global narrow limit and moment passage]
\label{prop:global-narrow-limit}
There exist a strictly increasing sequence \(N_k\) and a narrowly
continuous curve \(t\mapsto\mu_t\) of finite nonnegative Radon measures
on \(X\) such that, for every \(T>0\),
\begin{equation}
 \mu_{N_k}\longrightarrow\mu
 \quad\hbox{in }C([0,T];\mathcal M_+(X)\text{-narrow}).
 \label{eq:narrow-compact-time}
\end{equation}
Moreover, \(\mu_0=f_0(x)\,dx\), and for every \(t\geq0\),
\begin{equation}
 M_0(\mu_t)=M_0(f_0),
 \qquad
 M_1(\mu_t)=M_1(f_0),
 \qquad
 M_2(\mu_t)\leq M_2(f_0).
 \label{eq:limit-physical-moments}
\end{equation}
For every \(T>0\),
\begin{equation}
 \sup_{0\leq t\leq T}\int_X\Phi(m)\,d\mu_t(x)
 \leq
 \sup_{N\geq1}\sup_{0\leq t\leq T}
 \int_X\Phi(m)f_N(t,x)\,dx<\infty,
 \label{eq:limit-Phi-bound}
\end{equation}
and hence
\begin{equation}
 \lim_{R\to\infty}\sup_{0\leq t\leq T}
 \int_{\{m>R\}}m\,d\mu_t(x)=0.
 \label{eq:measure-limit-no-gelation}
\end{equation}
Finally, 
\begin{equation}
 \lim_{r\downarrow0}\lim_{R\uparrow\infty}
 \sup_{0\leq t\leq T}
 \left[
 \sup_{N\geq1}\mathcal R_N\bigl(X^2\setminus K_{r,R}^2,t\bigr)
 +\mathcal R\bigl(X^2\setminus K_{r,R}^2,t\bigr)
 \right]=0.
 \label{eq:collision-rate-tightness}
\end{equation}
\end{proposition}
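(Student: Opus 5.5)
The plan is the standard compactness argument in $\mathcal M_+(X)$ with the narrow topology, followed by a diagonal extraction over $T=1,2,\dots$.

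\textbf{Step 1: compactness of the curves.} Fix $T>0$. For each $t\in[0,T]$, the family $\{\mu_N(t)\}_N$ is tight. Indeed:
\begin{itemize}
 \item \cref{prop:small-mass} controls $\mu_N(t)(\{m<r\})$ uniformly;
 \item $M_1(f_0)/R$ controls $\{m>R\}$;
 \item on $\{r\le m\}$, $M_2(f_0)/(rR^2)$ controls $\{|v|>R\}$.
\end{itemize}
So each time slice lies in a fixed relatively compact set by Prokhorov. For equicontinuity, use the bounded-Lipschitz distance. It is dominated by the total variation distance, which is at most $2C_{\mathrm{rate}}|t-s|$ by \cref{lem:uniform-rate-time}. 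Since this metric metrizes narrow convergence on tight bounded sets, the metric-valued Arzel\`a--Ascoli theorem gives a subsequence converging in $C([0,T];\mathcal M_+(X)\text{-narrow})$. A diagonal argument over integer $T$ produces $N_k$ and a global narrowly continuous limit. Clearly $\mu_0=f_0\,dx$.

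\textbf{Step 2: moments and the $\Phi$-bound.} Lower semicontinuity under narrow convergence of integrals of nonnegative continuous functions gives two things. Applied to $\Phi(m)$, truncated as $\min\{\Phi,H\}$ followed by $H\uparrow\infty$, it gives \eqref{eq:limit-Phi-bound} via \cref{thm:Phi-propagation}. Applied to $m|v|^2$, it gives $M_2(\mu_t)\le M_2(f_0)$.

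For $M_0$, test with $\psi_r$: continuous, bounded, equal to $1$ on $\{m\ge 2r\}$ and $0$ on $\{m\le r\}$. Tightness at $m=0$ from \cref{prop:small-mass} shows that $\int(1-\psi_r)\,d\mu_N(t)$ is uniformly small, so $M_0(\mu_t)=M_0(f_0)$.

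For $M_1$, test with $m\psi_r(m)\min\{1,(R+1-m)_+\}$. The small-mass part is at most $2r M_0$. The large-mass part is uniformly small by \cref{prop:no-gelation}. This yields $M_1(\mu_t)=M_1(f_0)$. Finally, \eqref{eq:measure-limit-no-gelation} follows from \eqref{eq:limit-Phi-bound} exactly as in the proof of \cref{prop:no-gelation}, using monotonicity of $\Phi(m)/m$.

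\textbf{Step 3: collision-rate tightness (main obstacle).} By symmetry, $X^2\setminus K_{r,R}^2\subset (A\times X)\cup(X\times A)$ with $A=X\setminus K_{r,R}$, and the two pieces have equal rate. Apply \cref{lem:localized-collision-rate} with $\mu=\mu_N(t)$, using $a_N$, and with $\mu=\mu_t$, using the limit moments from Step 2. The right-hand side is then controlled by $\mu(A)$ and $\int_A m\,d\mu$, with all other moments uniformly bounded. Split $A$ into three pieces:
\begin{itemize}
 \item $\{m<r\}$: $\mu(A\cap\{m<r\})$ is small by \cref{prop:small-mass}, and $\int_{\{m<r\}}m\,d\mu\le rM_0$.
 \item $\{m>R\}$: this piece is small by the no-gelation estimates \eqref{eq:model-approximate-no-gelation} and \eqref{eq:measure-limit-no-gelation}, together with $\mu(\{m>R\})\le M_1/R$.
 \item $\{r\le m\le R,\ |v|>R\}$: here $\mu\le M_2/(rR^2)$ and $\int m\,d\mu\le M_2/R^2$.
\end{itemize}
The delicate point is the order of limits. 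For fixed $r$, letting $R\to\infty$ kills the last two pieces. Then $r\downarrow0$ kills the first. For the limit measure, \cref{prop:small-mass} transfers to $\mu_t$ by narrow lower semicontinuity on open sets: $\mu_t(\{m<r\})\le\liminf_k\mu_{N_k}(t)(\{m<r\})$. The only genuinely new input beyond the physical moments is the uniform large-mass control $\int_{\{m>R\}}m\,d\mu$, which is exactly what \cref{thm:Phi-propagation} provides; without it the $S$-weighted rate would not be tight.
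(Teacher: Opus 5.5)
Your proposal is correct and follows essentially the same route as the paper. It uses uniform tightness with the $d_{\mathrm{BL}}$-Lipschitz bound, metric Arzel\`a--Ascoli and a diagonal extraction; lower semicontinuity for $\Phi$ and $M_2$; the no-gelation estimate for $M_1$; and \cref{lem:localized-collision-rate} with the same three-piece splitting and order of limits for the collision-rate tightness. The differences are minor: the paper obtains $M_0$ directly from $1\in C_b(X)$, so your $\psi_r$ cutoff is unnecessary, and it tests $M_1$ with $m\wedge R$. The paper also writes out the upgrade from uniform $d_{\mathrm{BL}}$-convergence to uniform-in-time convergence against each $\psi\in C_b(X)$, which you only assert, although it follows readily from your uniform tightness.
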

\begin{proof}
Fix \(T>0\).

\noindent\underline{\textbf{Step 1: uniform tightness of the one-particle
measures.}}
Fix \(\varepsilon>0\). By \eqref{eq:small-mass-uniform}, choose
\(r\in(0,1), R>r\) such that
\begin{equation*}\sup_{N\geq1}\sup_{0\leq t\leq T}F_N(r,t)<\frac{\varepsilon}{3},\quad \frac{M_1(f_0)}{R}<\frac{\varepsilon}{3},\quad \frac{M_2(f_0)}{rR^2}<\frac{\varepsilon}{3}.\end{equation*}

Since
\begin{equation*}K_{r,R}^c\subset\{m<r\}\cup\{m>R\}
\cup\{m\geq r,\ \abs v>R\},\end{equation*}
conservation of mass and kinetic energy
gives
\begin{align}
 \mu_N(t)(K_{r,R}^c)
 &\leq F_N(r,t)
 +\frac1R\int_Xm\,d\mu_N(t)
 +\frac1{rR^2}\int_Xm|v|^2\,d\mu_N(t) \notag\\
 &\leq F_N(r,t)+\frac{M_1(f_0)}R
 +\frac{M_2(f_0)}{rR^2}.
\end{align}
Thus
\begin{align}
 \sup_{N\ge 1}\sup_{0\leq t\leq T}\mu_N(t)(K_{r,R}^c)
 <\varepsilon.
 \label{eq:uniform-tightness-muN}
\end{align}
Hence
\(\{\mu_N(t):N\geq1,\ 0\leq t\leq T\}\) is uniformly tight.
Since \(\mu_N(t)(X)=M_0(f_0)\), Prokhorov's theorem yields relative
compactness in \(\mathcal M_+(X)\) endowed with the narrow topology.

\noindent\underline{\textbf{Step 2: compactness in global time.}}
For \(\nu,\widetilde\nu\in\mathcal M_+(X)\), define the bounded--Lipschitz distance
\begin{equation}
 d_{\mathrm{BL}}(\nu,\widetilde\nu)
 :=
 \sup_{\substack{
                  \norm{\psi}_{\infty}\leq1\\
                  \operatorname{Lip}(\psi)\leq1}}
 \left|\int_X\psi\,d\nu-\int_X\psi\,d\widetilde\nu\right|,
\end{equation}
which metrizes the narrow topology on \(\mathcal M_+(X)\). Since \(\mu_N(t)=f_N(t,x)\,dx\), for \(0\leq s,t\leq T\),
\begin{align}
 d_{\mathrm{BL}}\bigl(\mu_N(t),\mu_N(s)\bigr)
 &=
 \sup_{\substack{\norm{\psi}_{\infty}\leq1\\
                  \operatorname{Lip}(\psi)\leq1}}
 \left|
 \int_X\psi(x)\bigl(f_N(t,x)-f_N(s,x)\bigr)\,dx
 \right| \notag\\
 &\leq\norm{f_N(t)-f_N(s)}_{L^1(X)}
 \leq2C_{\mathrm{rate}}\abs{t-s},
 \label{eq:muN-equicontinuity}
\end{align}
where the last inequality follows from
\eqref{eq:uniform-time-Lipschitz}. Moreover, $\mu_N(t)(X)=M_0(f_0)$ and by Step~1,
\begin{equation}
 \mathcal K_T
 :=
 \overline{
 \{\mu_N(t):N\geq1,\ 0\leq t\leq T\}
 }^{\,\mathrm{narrow}}
 \quad\text{is compact in }
 (\mathcal M_+(X),d_{\mathrm{BL}}),
 \label{eq:compact-range}
\end{equation}
the metric-valued Arzelà--Ascoli theorem
give a subsequence \((N_j)\) and a curve
\(\mu:[0,T]\to\mathcal M_+(X)\) such that
\begin{equation}
  \lim_{j\to\infty}\sup_{0\leq t\leq T}
 d_{\mathrm{BL}}\bigl(\mu_{N_j}(t),\mu(t)\bigr)=0.
 \label{eq:uniform-BL-convergence}
\end{equation}
For \(0\leq s,t\leq T\), \eqref{eq:muN-equicontinuity} and
\eqref{eq:uniform-BL-convergence} yield
\begin{align}
 d_{\mathrm{BL}}\bigl(\mu(t),\mu(s)\bigr)
 &\leq
 d_{\mathrm{BL}}\bigl(\mu(t),\mu_{N_j}(t)\bigr)
 +d_{\mathrm{BL}}\bigl(\mu_{N_j}(t),\mu_{N_j}(s)\bigr)
 +d_{\mathrm{BL}}\bigl(\mu_{N_j}(s),\mu(s)\bigr)
 \notag\\
 &\leq
 d_{\mathrm{BL}}\bigl(\mu(t),\mu_{N_j}(t)\bigr)
 +2C_{\mathrm{rate}}\abs{t-s}
 +d_{\mathrm{BL}}\bigl(\mu_{N_j}(s),\mu(s)\bigr).
\end{align}
Letting \(j\to\infty\) gives
\(d_{\mathrm{BL}}(\mu(t),\mu(s))
\leq2C_{\mathrm{rate}}\abs{t-s}\); in particular,
\(\mu\in C([0,T];\mathcal M_+(X)\text{-narrow})\).

Now we upgrade \eqref{eq:uniform-BL-convergence} to uniform narrow
convergence against every test function in \(C_b(X)\). Fix
\(\psi\in C_b(X)\). Given
\(\varepsilon,\eta>0\), choose \(K_{r,R}\) such that
\eqref{eq:uniform-tightness-muN} holds. Since \(K_{r,R}\) is compact, there is a
bounded Lipschitz function \(g\) satisfying
\(\norm g_{\infty}\leq \|\psi\|_\infty\) and
\(\sup_{x\in K_{r,R}}\abs{\psi(x)-g(x)}<\eta\).
For every \(t\in[0,T]\), the Portmanteau theorem gives
\begin{equation}
 \mu(t)(K_{r,R}^c)
 \leq\liminf_{j\to\infty}\mu_{N_j}(t)(K_{r,R}^c)
 \leq\varepsilon,
 \qquad
 \mu(t)(X)
 =\lim_{j\to\infty}\mu_{N_j}(t)(X)
 =M_0(f_0).
\end{equation}
Consequently, denoting by $G=\max\{\norm g_{\infty},\operatorname{Lip}(g)\}$, inserting \(g\) and splitting the two error integrals over
\(K_{r,R}\) and \(K_{r,R}^c\), we obtain
\begin{align*}
 &\sup_{0\leq t\leq T}
 \left|
 \int_X\psi\,d\mu_{N_j}(t)-\int_X\psi\,d\mu(t)
 \right| \leq
 \sup_{0\leq t\leq T}
 \Bigg[
 \left|
 \int_Xg\,d\mu_{N_j}(t)-\int_Xg\,d\mu(t)
 \right|
 +\int_{K_{r,R}}\abs{\psi-g}\,d\mu_{N_j}(t)\notag\\ &
 \qquad\qquad\qquad\qquad\qquad\qquad+\int_{K_{r,R}}\abs{\psi-g}\,d\mu(t)
 +\int_{K_{r,R}^c}\abs{\psi-g}\,d\mu_{N_j}(t)
 +\int_{K_{r,R}^c}\abs{\psi-g}\,d\mu(t)
 \Bigg]
 \notag\\
 &\leq
 G
 \sup_{0\leq t\leq T}
 d_{\mathrm{BL}}\bigl(\mu_{N_j}(t),\mu(t)\bigr)
 +\eta\sup_{0\leq t\leq T}
 \bigl(\mu_{N_j}(t)(K_{r,R})+\mu(t)(K_{r,R})\bigr)
 \notag\\
 &
 +2\|\psi\|_\infty\sup_{0\leq t\leq T}
 \bigl(\mu_{N_j}(t)(K_{r,R}^c)
       +\mu(t)(K_{r,R}^c)\bigr)\leq
 G
 \sup_{0\leq t\leq T}
 d_{\mathrm{BL}}\bigl(\mu_{N_j}(t),\mu(t)\bigr)
 +2\eta M_0(f_0)+4\|\psi\|_\infty\varepsilon.
\end{align*}
First letting \(j\to\infty\), then \(\eta\downarrow0\) and
\(\varepsilon\downarrow0\), gives
\begin{equation}
 \lim_{j\to\infty}\sup_{0\leq t\leq T}
 \left|\int_X\psi\,d\mu_{N_j}(t)-\int_X\psi\,d\mu(t)\right|=0,
 \qquad \psi\in C_b(X).
 \label{eq:uniform-narrow-convergence}
\end{equation}
This proves \eqref{eq:narrow-compact-time}. Finally, apply the preceding extraction successively on
\([0,1],[0,2],\ldots\). Choose nested subsequences satisfying
\begin{equation}
 (N_j^{(k+1)})_{j\geq1}\subset(N_j^{(k)})_{j\geq1},
 \qquad
 \mu_{N_j^{(k)}}\longrightarrow\mu^{(k)}
 \quad\text{in }
 C([0,k];\mathcal M_+(X)\text{-narrow}).
\end{equation}
For every \(k\geq1\), uniqueness of the narrow limit gives
\(\mu^{(k+1)}|_{[0,k]}=\mu^{(k)}\). Setting
\(N_j=N_j^{(j)}\) and
\(\mu(t)=\mu^{(k)}(t)\) for \(0\leq t\leq k\), we obtain
\begin{equation}
 \mu_{N_j}\longrightarrow\mu
 \quad\text{in }
 C([0,T];\mathcal M_+(X)\text{-narrow})
\end{equation}
for every \(T>0\). Since \(\mu_{N_j}(0)=f_0(x)\,dx\), we also have
\begin{equation*}
 d_{\mathrm{BL}}\bigl(\mu(0),f_0(x)\,dx\bigr)
 \leq
 d_{\mathrm{BL}}\bigl(\mu(0),\mu_{N_j}(0)\bigr)
 \longrightarrow0,
\end{equation*}
and therefore \(\mu(0)=f_0(x)\,dx\).

\noindent\underline{\textbf{Step 3: passage of number, mass, and
energy.}}
Since \(1\in C_b(X)\), \eqref{eq:narrow-compact-time} and
\eqref{eq:truncated-moments} give
\begin{equation}
 M_0(\mu_t)
 =\lim_{k\to\infty}\int_X1\,d\mu_{N_k}(t)
 =M_0(f_0)
\end{equation}
for every \(t\in[0,T]\). We next pass to the total mass. For every \(R>0\), the function
\(m\mapsto m\wedge R\) belongs to \(C_b(X)\). Moreover,
\begin{align}
 0\leq M_1(f_0)-\int_X(m\wedge R)\,d\mu_N(t)=\int_{\{m>R\}}(m-R)\,d\mu_N(t)
 \leq\int_{\{m>R\}}m\,d\mu_N(t).
 \label{eq:truncated-mass-defect}
\end{align}
By \eqref{eq:narrow-compact-time},
\begin{equation*}
 \int_X(m\wedge R)\,d\mu_{N_k}(t)
 \longrightarrow
 \int_X(m\wedge R)\,d\mu_t
\end{equation*}
uniformly for \(t\in[0,T]\). Passing \(k\to\infty\) in
\eqref{eq:truncated-mass-defect} therefore yields
\begin{equation}
 0\leq
 M_1(f_0)-\int_X(m\wedge R)\,d\mu_t
 \leq
 \sup_{N\geq1}\sup_{0\leq s\leq T}
 \int_{\{m>R\}}m\,d\mu_N(s),
\end{equation}
where the right-hand side tends to zero as \(R\to\infty\) by
\cref{prop:no-gelation}. On the other hand,
\(m\wedge R\uparrow m\), so the monotone convergence theorem gives
\begin{equation}
 M_1(\mu_t)
 =\lim_{R\to\infty}\int_X(m\wedge R)\,d\mu_t
 =M_1(f_0).
\end{equation}
Finally, for every \(R>0\), the function
\((m|v|^2)\wedge R\in C_b(X)\). Hence
\eqref{eq:narrow-compact-time} and \eqref{eq:truncated-moments} give
\begin{align*}
 \int_X\bigl((m|v|^2)\wedge R\bigr)\,d\mu_t
 =\lim_{k\to\infty}
 \int_X\bigl((m|v|^2)\wedge R\bigr)\,d\mu_{N_k}(t)\leq\limsup_{k\to\infty}M_2(\mu_{N_k}(t))
 \leq M_2(f_0).
\end{align*}
Letting \(R\to\infty\) and applying monotone convergence once more,
we obtain
\begin{equation}
 M_2(\mu_t)
 =\int_Xm|v|^2\,d\mu_t
 \leq M_2(f_0).
\end{equation}
Since \(T>0\) was arbitrary, these conclusions hold for every
\(t\geq0\), proving \eqref{eq:limit-physical-moments}.

\noindent\underline{\textbf{Step 4: passage of the $\Phi$-moment.}}
Since $x\mapsto \Phi(m)$ is continuous and
nonnegative, for every \(t\in[0,T]\), the Portmanteau
theorem gives
\begin{align*}
 \int_X\Phi(m)\,d\mu_t
 \leq
 \liminf_{k\to\infty}
 \int_X\Phi(m)\,d\mu_{N_k}(t)\leq
 \sup_{N\geq1}\sup_{0\leq s\leq T}
 \int_X\Phi(m)f_N(s,x)\,dx.
\end{align*}
Taking the supremum over \(t\in[0,T]\) and using
\eqref{eq:Phi-propagation}, we obtain
\begin{equation}
 \sup_{0\leq t\leq T}\int_X\Phi(m)\,d\mu_t
 \leq
 \sup_{N\geq1}\sup_{0\leq t\leq T}
 \int_X\Phi(m)f_N(t,x)\,dx
 <\infty,
\end{equation}
which proves \eqref{eq:limit-Phi-bound}.

Since \(m\mapsto\Phi(m)/m\) is nondecreasing and
\(\Phi(m)/m\to\infty\), for every \(R>0\) and \(t\in[0,T]\),
\begin{align*}
 \int_{\{m>R\}}m\,d\mu_t
 \leq
 \left(\inf_{m>R}\frac{\Phi(m)}{m}\right)^{-1}
 \int_{\{m>R\}}\Phi(m)\,d\mu_t\leq
 \left(\inf_{m>R}\frac{\Phi(m)}{m}\right)^{-1}
 \sup_{0\leq s\leq T}\int_X\Phi(m)\,d\mu_s.
\end{align*}
Consequently,
\begin{equation}
 \sup_{0\leq t\leq T}\int_{\{m>R\}}m\,d\mu_t
 \leq
 \left(\inf_{m>R}\frac{\Phi(m)}{m}\right)^{-1}
 \sup_{0\leq t\leq T}\int_X\Phi(m)\,d\mu_t
 \longrightarrow0
\end{equation}
as \(R\to\infty\). This proves
\eqref{eq:measure-limit-no-gelation}.

\noindent\underline{\textbf{Step 5: tightness of the collision
rates.}}
We first prove the required one-particle tightness estimates for the
approximate measures. Since
\begin{equation*}
 K_{r,R}^c
 \subset
 \{m<r\}\cup\{m>R\}
 \cup\{m\geq r,\ \abs v>R\},
\end{equation*}
\eqref{eq:truncated-moments} gives
\begin{align}
 \mu_N(t)(K_{r,R}^c)
 &\leq F_N(r,t)
 +\frac1R\int_Xm\,d\mu_N(t)
 +\frac1{rR^2}\int_Xm|v|^2\,d\mu_N(t) \notag\\
 &\leq F_N(r,t)
 +\frac{M_1(f_0)}R
 +\frac{M_2(f_0)}{rR^2}.
 \label{eq:collision-number-tightness-N}
\end{align}
Similarly,
\begin{align}
 \int_{K_{r,R}^c}m\,d\mu_N(t)
 &\leq
 \int_{\{m<r\}}m\,d\mu_N(t)
 +\int_{\{m>R\}}m\,d\mu_N(t)
 +\int_{\{\abs v>R\}}m\,d\mu_N(t) \notag\\
 &\leq
 rM_0(f_0)
 +\int_{\{m>R\}}m\,d\mu_N(t)
 +\frac{M_2(f_0)}{R^2}.
 \label{eq:collision-mass-tightness-N}
\end{align}
Taking the supremum over \(N\geq1\) and \(t\in[0,T]\), then letting
\(R\to\infty\) and \(r\downarrow0\), equations
\eqref{eq:small-mass-uniform}, \eqref{eq:collision-number-tightness-N},
\eqref{eq:collision-mass-tightness-N}, and
\cref{prop:no-gelation} yield
\begin{align}
 &\lim_{r\downarrow0}\lim_{R\uparrow\infty}
 \sup_{N\geq1}\sup_{0\leq t\leq T}
 \mu_N(t)(K_{r,R}^c)=0,
 \label{eq:collision-number-tightness-N-limit}\\
 &\lim_{r\downarrow0}\lim_{R\uparrow\infty}
 \sup_{N\geq1}\sup_{0\leq t\leq T}
 \int_{K_{r,R}^c}m\,d\mu_N(t)=0.
 \label{eq:collision-mass-tightness-N-limit}
\end{align}

We now establish the corresponding estimates for the limit curve \(t\mapsto \mu_t\).
Since \(\{m<r\}\) is open, the Portmanteau theorem gives, for every
\(t\in[0,T]\),
\begin{equation}
 \mu_t(\{m<r\})
 \leq\liminf_{k\to\infty}
 \mu_{N_k}(t)(\{m<r\})
 \leq
 \sup_{N\geq1}\sup_{0\leq s\leq T}F_N(r,s).
\end{equation}
Using \eqref{eq:limit-physical-moments}, we therefore obtain
\begin{align}
 \mu_t(K_{r,R}^c)
 &\leq
 \mu_t(\{m<r\})
 +\frac1R\int_Xm\,d\mu_t
 +\frac1{rR^2}\int_Xm|v|^2\,d\mu_t \notag\\
 &\leq
 \sup_{N\geq1}\sup_{0\leq s\leq T}F_N(r,s)
 +\frac{M_1(f_0)}R
 +\frac{M_2(f_0)}{rR^2}.
 \label{eq:collision-number-tightness-limit}
\end{align}
Likewise,
\begin{align}
 \int_{K_{r,R}^c}m\,d\mu_t
 &\leq
 rM_0(\mu_t)
 +\int_{\{m>R\}}m\,d\mu_t
 +\frac1{R^2}M_2(\mu_t) \notag\\
 &\leq
 rM_0(f_0)
 +\int_{\{m>R\}}m\,d\mu_t
 +\frac{M_2(f_0)}{R^2}.
 \label{eq:collision-mass-tightness-limit}
\end{align}
It follows from \eqref{eq:small-mass-uniform} and
\eqref{eq:measure-limit-no-gelation} that
\begin{align}
 &\lim_{r\downarrow0}\lim_{R\uparrow\infty}
 \sup_{0\leq t\leq T}\mu_t(K_{r,R}^c)=0,
 \label{eq:collision-number-tightness-limit-curve}\\
 &\lim_{r\downarrow0}\lim_{R\uparrow\infty}
 \sup_{0\leq t\leq T}
 \int_{K_{r,R}^c}m\,d\mu_t=0.
 \label{eq:collision-mass-tightness-limit-curve}
\end{align}

Because
\(
 X^2\setminus K_{r,R}^2
 \subset
 (K_{r,R}^c\times X)\cup(X\times K_{r,R}^c),
\)
and the collision-rate measure is symmetric in \(x\) and \(x_1\), we have
\(
 \mathcal R_N(X^2\setminus K_{r,R}^2,t)
 \leq2\mathcal R_N(K_{r,R}^c\times X,t).
\)
Applying \cref{lem:localized-collision-rate} to
\(\mu_N(t)\) with \(A=K_{r,R}^c\), and then using
\eqref{eq:truncated-moments}, gives
\begin{align}
 &\mathcal R_N(X^2\setminus K_{r,R}^2,t) \leq
 2a_+\norm b_{L^1(\Sph)}
 \Biggl[
 \bigl(M_0(f_0)\mu_N(t)(K_{r,R}^c)\bigr)^{1-\gamma}
 \bigl(2M_0(f_0)M_2(f_0)\bigr)^\gamma \notag\\
 &\qquad\qquad+
 \left(
 M_0(f_0)\int_{K_{r,R}^c}m\,d\mu_N(t)
 +M_1(f_0)\mu_N(t)(K_{r,R}^c)
 \right)^{1-\gamma}
 \bigl(2M_1(f_0)M_2(f_0)\bigr)^\gamma
 \Biggr].
 \label{eq:localized-collision-tightness-N}
\end{align}
Since \(1-\gamma>0\), equations
\eqref{eq:collision-number-tightness-N-limit} and
\eqref{eq:collision-mass-tightness-N-limit} imply
\begin{equation}
 \lim_{r\downarrow0}\lim_{R\uparrow\infty}
 \sup_{N\geq1}\sup_{0\leq t\leq T}
 \mathcal R_N(X^2\setminus K_{r,R}^2,t)=0.
 \label{eq:collision-rate-tightness-N}
\end{equation}

The same argument applies to \(\mu_t\). Indeed,
\eqref{eq:limit-physical-moments} and
\cref{lem:localized-collision-rate} give
\begin{align}
 &\mathcal R(X^2\setminus K_{r,R}^2,t) \leq
 2a_+\norm b_{L^1(\Sph)}
 \Biggl[
 \bigl(M_0(f_0)\mu_t(K_{r,R}^c)\bigr)^{1-\gamma}
 \bigl(2M_0(f_0)M_2(f_0)\bigr)^\gamma \notag\\
 &\qquad\qquad+
 \left(
 M_0(f_0)\int_{K_{r,R}^c}m\,d\mu_t
 +M_1(f_0)\mu_t(K_{r,R}^c)
 \right)^{1-\gamma}
 \bigl(2M_1(f_0)M_2(f_0)\bigr)^\gamma
 \Biggr].
 \label{eq:localized-collision-tightness-limit}
\end{align}
Together with
\eqref{eq:collision-number-tightness-limit-curve} and
\eqref{eq:collision-mass-tightness-limit-curve}, this yields
\begin{equation}
 \lim_{r\downarrow0}\lim_{R\uparrow\infty}
 \sup_{0\leq t\leq T}
 \mathcal R(X^2\setminus K_{r,R}^2,t)=0.
 \label{eq:collision-rate-tightness-limit-curve}
\end{equation}
Adding the last two limits proves
\eqref{eq:collision-rate-tightness}.
\end{proof}

\subsection{Identification of the collision operator}
\begin{lemma}[Finite signed measure collision form]
\label{lem:measure-valued-collision-operator}
Let \(\mu\) be a finite nonnegative Radon measure on \(X\) satisfying
the physical-moment bounds in \eqref{eq:limit-physical-moments}.  Then,
for every bounded Borel function \(\psi:X\to\R\), the integral
\begin{align}
 \mathcal Q(\mu,\mu)[\psi]
 :=\frac12\int_{X^2}\int_0^1\int_{\Sph}
 &a(m,m_1,\alpha)E(x,x_1)^\gamma
 b\left(\frac{u}{\abs u}\cdot\omega\right)
 \Delta\psi
 d\omega\,d\alpha\,d\mu(x_1)\,d\mu(x)
 \label{eq:measure-static-collision-form}
\end{align}
is absolutely convergent.  Moreover, there exists a unique finite
signed Radon measure \(\mathbf Q(\mu,\mu)\) on \(X\) such that
\begin{equation}
 \int_X\psi\,d\mathbf Q(\mu,\mu)
 =\mathcal Q(\mu,\mu)[\psi]
 \qquad\text{for every bounded Borel }\psi.
 \label{eq:measure-collision-duality}
\end{equation}
More explicitly, for every Borel set \(A\subset X\),
\begin{align}
 \mathbf Q(\mu,\mu)(A)
 =\frac12\int_{X^2}\int_0^1\int_{\Sph}
 &a(m,m_1,\alpha)E(x,x_1)^\gamma
 b\left(\frac{u}{\abs u}\cdot\omega\right)
 \notag\\
 &\times
 \bigl[
 \one_A(x')+\one_A(x_1')
 -\one_A(x)-\one_A(x_1)
 \bigr]
 \,d\omega\,d\alpha\,d\mu(x_1)\,d\mu(x).
 \label{eq:measure-collision-set-form}
\end{align}
Its total variation satisfies
\begin{align}
 \norm{\mathbf Q(\mu,\mu)}_{\mathrm{TV}}
 \leq
 2a_+\norm{b}_{L^1(\Sph)}
 \int_{X^2}(1+S)E^\gamma
 \,d\mu(x_1)\,d\mu(x)\leq 2C_{\mathrm{rate}}.
 \label{eq:measure-collision-TV}
\end{align}
In particular,
\begin{equation}
 \abs{\mathcal Q(\mu,\mu)[\psi]}
 \leq
 2C_{\mathrm{rate}}\norm{\psi}_\infty
 \qquad\text{for every bounded Borel }\psi.
 \label{eq:measure-collision-bounded-test}
\end{equation}
\end{lemma}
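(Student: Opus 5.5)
The plan is to define \(\mathbf Q(\mu,\mu)\) directly by the set formula \eqref{eq:measure-collision-set-form}. We then check that it is a finite signed measure and extend the duality from indicators to bounded Borel functions.

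\textbf{Step 1: absolute convergence.} For bounded Borel \(\psi\) one has \(\abs{\Delta\psi}\leq4\norm\psi_\infty\). By \eqref{eq:model-linear-upper}, after integrating out \(\alpha\) and \(\omega\), the absolute integrand in \eqref{eq:measure-static-collision-form} is bounded by
\[
 4\norm\psi_\infty a_+\norm b_{L^1(\Sph)}(1+S)E^\gamma .
\]
\cref{lem:joint-rate} for measures, applied with the moment bounds \eqref{eq:limit-physical-moments}, shows that \(\int_{X^2}(1+S)E^\gamma\,d\mu\,d\mu\) is finite. Using \(M_1M_2-\abs P^2\leq M_1M_2\), it is also at most \(C_{\mathrm{rate}}/(a_+\norm b_{L^1})\). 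Measurability of the integrand in \((x,x_1,\alpha,\omega)\) follows from Borel measurability of the collision map \eqref{eq:model-first-output}--\eqref{eq:model-second-output}, with the convention \eqref{eq:zero-relative-velocity-convention}. Fubini then applies.

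\textbf{Step 2: the measure.} Set \(\mathbf Q(\mu,\mu)=\tfrac12(\nu_+-\nu_-)\), where
\[
 \nu_+(A)=\int k\,[\one_A(x')+\one_A(x_1')],\qquad
 \nu_-(A)=\int k\,[\one_A(x)+\one_A(x_1)],
\]
with \(k=aE^\gamma b\) integrated against \(d\omega\,d\alpha\,d\mu\,d\mu\). Both are nonnegative measures: countable additivity follows from monotone convergence. Both have total mass
\[
 2\int k\leq2a_+\norm b_{L^1}\int(1+S)E^\gamma\,d\mu\,d\mu<\infty .
\]
Hence \(\mathbf Q(\mu,\mu)\) is a finite signed Borel measure, and
\[
 \norm{\mathbf Q}_{\mathrm{TV}}\leq\tfrac12(\nu_+(X)+\nu_-(X)).
\]
This gives \eqref{eq:measure-collision-TV}. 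In particular the constant \(2a_+\norm b\int(1+S)E^\gamma\) is attained, and it is at most \(2C_{\mathrm{rate}}\) by Step 1. Every finite Borel measure on the Polish (locally compact, \(\sigma\)-compact) space \(X\) is Radon, so no extra regularity argument is needed.

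\textbf{Step 3: duality and uniqueness.} Identity \eqref{eq:measure-collision-duality} holds for indicators by construction, and it extends to simple functions by linearity. A bounded Borel \(\psi\) is a uniform limit of simple functions \(\psi_n\) with \(\norm{\psi_n}_\infty\leq\norm\psi_\infty\). Dominated convergence, with dominating function from Step 1, passes the right side to the limit, and the left side converges since \(\mathbf Q\) is finite. Uniqueness is immediate: two finite signed measures agreeing on all indicators coincide. Finally, \eqref{eq:measure-collision-bounded-test} follows from \(\abs{\int\psi\,d\mathbf Q}\leq\norm\psi_\infty\norm{\mathbf Q}_{\mathrm{TV}}\).

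The only delicate point is the joint Borel measurability of the post-collisional map, including at \(u=0\) and \(\theta\in\{0,1\}\). Since \(m,m_1>0\) we have \(\theta\in(0,1)\), and the map is continuous off \(\{u=0\}\), where the fixed convention applies. I expect this to be routine.
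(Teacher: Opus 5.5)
Your proposal is correct and is essentially the paper's argument: your $\nu_+$ and $\nu_-$ are exactly $\nu'+\nu_1'$ and $\nu+\nu_1$, the sums of the push-forwards of the finite collision parameter measure $\Lambda_\mu=aE^\gamma b\,d\omega\,d\alpha\,d\mu\,d\mu$ under the outgoing and incoming maps. Your extension from indicators through simple functions to bounded Borel $\psi$ is just the change-of-variables formula for push-forwards that the paper invokes directly, and your total-variation bound $\tfrac12(\nu_+(X)+\nu_-(X))=2\Lambda_\mu(X^2\times(0,1)\times\Sph)$, compared with $C_{\mathrm{rate}}$ via \cref{lem:joint-rate} and \eqref{eq:limit-physical-moments}, is the same as the paper's (one wording fix: ``attained'' should be dropped, since it is the upper bound, not the total variation itself, that equals that constant).
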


\begin{proof}
Introduce the finite nonnegative measure on
\(X^2\times(0,1)\times\Sph\) given by
\begin{align}
 d\Lambda_\mu(x,x_1,\alpha,\omega)
 &:=
 a(m,m_1,\alpha)E(x,x_1)^\gamma
 b\left(\frac{u}{\abs u}\cdot\omega\right)
 \,d\omega\,d\alpha\,d\mu(x_1)\,d\mu(x).
 \label{eq:collision-parameter-measure}
\end{align}
 By
\eqref{eq:model-linear-upper} and
\eqref{eq:model-angular-cutoff},
\begin{align}
 \Lambda_\mu\bigl(X^2\times(0,1)\times\Sph\bigr)
 &\leq
 a_+\norm{b}_{L^1(\Sph)}
 \int_{X^2}(1+S)E^\gamma
 \,d\mu(x_1)\,d\mu(x).
 \label{eq:collision-parameter-total-mass}
\end{align}
The measure version of \cref{lem:joint-rate}, together with
\eqref{eq:limit-physical-moments}, shows that the right-hand side is
bounded by \(C_{\mathrm{rate}}\) and \(\Lambda_\mu\) is finite.

Let \(\nu,\nu_1,\nu',\nu_1'\) be the push-forwards of
\(\Lambda_\mu\) under the four Borel maps
\begin{equation*}
 (x,x_1,\alpha,\omega)\mapsto x,\qquad
 (x,x_1,\alpha,\omega)\mapsto x_1,\qquad
 (x,x_1,\alpha,\omega)\mapsto x',\qquad
 (x,x_1,\alpha,\omega)\mapsto x_1',
\end{equation*}
respectively.  Each of these is a finite nonnegative Borel measure on
\(X\), and
\begin{equation}
 \nu(X)=\nu_1(X)=\nu'(X)=\nu_1'(X)
 =\Lambda_\mu\bigl(X^2\times(0,1)\times\Sph\bigr).
 \label{eq:four-marginal-masses}
\end{equation}
Since \(X=(0,\infty)\times\R^d\) is a locally compact Polish space,
these finite Borel measures are Radon measures. Define
\begin{equation}
 \mathbf Q(\mu,\mu)
 :=\frac12\bigl(\nu'+\nu_1'-\nu-\nu_1\bigr),
 \label{eq:measure-collision-definition}
\end{equation}
which is a finite signed Radon measure.  By the definition of a
push-forward measure, every bounded Borel function \(\psi\) satisfies
\begin{align*}
 \int_X\psi\,d\nu'
 &=\int_{X^2\times(0,1)\times\Sph}
 \psi(x')\,d\Lambda_\mu,\\
 \int_X\psi\,d\nu_1'
 &=\int_{X^2\times(0,1)\times\Sph}
 \psi(x_1')\,d\Lambda_\mu,\\
 \int_X\psi\,d\nu
 &=\int_{X^2\times(0,1)\times\Sph}
 \psi(x)\,d\Lambda_\mu,\\
 \int_X\psi\,d\nu_1
 &=\int_{X^2\times(0,1)\times\Sph}
 \psi(x_1)\,d\Lambda_\mu.
\end{align*}
Substituting these four identities into
\eqref{eq:measure-collision-definition} gives
\eqref{eq:measure-collision-duality}.  Taking
\(\psi=\one_A\) gives \eqref{eq:measure-collision-set-form}.

It remains to estimate the total variation.  The triangle inequality
for finite signed measures and \eqref{eq:four-marginal-masses} give
\begin{align*}
 \norm{\mathbf Q(\mu,\mu)}_{\mathrm{TV}}
 \leq\frac12
 \bigl(
 \nu'(X)+\nu_1'(X)+\nu(X)+\nu_1(X)
 \bigr)
 =2\Lambda_\mu\bigl(X^2\times(0,1)\times\Sph\bigr).
\end{align*}
Combining this identity with
\eqref{eq:collision-parameter-total-mass} and
\cref{lem:joint-rate} proves
\eqref{eq:measure-collision-TV}.  Finally,
\begin{equation*}
 \abs{\mathcal Q(\mu,\mu)[\psi]}
 =\left|\int_X\psi\,d\mathbf Q(\mu,\mu)\right|
 \leq\norm{\psi}_\infty
 \norm{\mathbf Q(\mu,\mu)}_{\mathrm{TV}},
\end{equation*}
which gives \eqref{eq:measure-collision-bounded-test}.  Uniqueness
follows because a finite signed measure is determined by its
integrals against bounded Borel functions.
\end{proof}
\begin{proposition}[Identification of the collision form]
\label{prop:collision-identification}
Along the subsequence in \cref{prop:global-narrow-limit}, for every
\(\psi\in C_c^1(X)\) and every \(T>0\),
\begin{equation}
 \lim_{k\to\infty}\sup_{0\leq t\leq T}
 \left|
 \mathcal Q_{N_k}(f_{N_k}(t),f_{N_k}(t))[\psi]
 -\mathcal Q(\mu_t,\mu_t)[\psi]
 \right|=0.
 \label{eq:collision-identification}
\end{equation}
\end{proposition}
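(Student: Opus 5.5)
The plan is to localize to a compact set with the collision-rate tightness \eqref{eq:collision-rate-tightness}, and then pass to the limit there using uniform narrow convergence of the product measures. Fix \(\psi\in C_c^1(X)\), \(T>0\) and \(\varepsilon>0\). For \(0<r<R\) write \(K=K_{r,R}\), and choose a continuous cutoff \(\zeta_{K}\in C_c(X^2;[0,1])\) with \(\zeta_K=1\) on \(K^2\) and support in \(K_{r/2,2R}^2\). Since \(\abs{\Delta\psi}\leq4\norm\psi_\infty\), both collision forms split into a localized part with integrand multiplied by \(\zeta_K(x,x_1)\) and a remainder. The remainders are bounded by \(2\norm\psi_\infty\) times
\[
\sup_{N}\mathcal R_N\bigl(X^2\setminus K^2,t\bigr)+\mathcal R\bigl(X^2\setminus K^2,t\bigr).
\]
By \eqref{eq:collision-rate-tightness}, we can choose \(r\) small and \(R\) large so that these remainders are below \(\varepsilon\), uniformly in \(t\in[0,T]\) and \(k\).

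On the localized part we remove the cutoff \(\chi_{N_k}\). On \(K_{r/2,2R}^2\) we have \(S\leq4R\), so \(\chi_{N_k}(S)=1\) as soon as \(N_k\geq4R\). The localized truncated form then coincides with the untruncated one evaluated at \(\mu_{N_k}(t)\). It remains to show that
\[
\int_{X^2}\zeta_K(x,x_1)\,W_\psi(x,x_1)\,d\mu_{N_k}(t)(x_1)\,d\mu_{N_k}(t)(x)
\longrightarrow
\int_{X^2}\zeta_K(x,x_1)\,W_\psi(x,x_1)\,d\mu_t(x_1)\,d\mu_t(x)
\]
uniformly in \(t\in[0,T]\), where
\[
W_\psi(x,x_1)=\tfrac12\int_0^1\int_{\Sph}a\,E^\gamma\,b\Bigl(\tfrac{u}{\abs u}\cdot\omega\Bigr)\Delta\psi\,d\omega\,d\alpha .
\]

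The main obstacle is that \(W_\psi\) need not be continuous: \(b\) is only measurable, and \(u/\abs u\) is discontinuous at \(u=0\). To deal with this, I would use rotation invariance of \(d\omega\) to rewrite \(W_\psi\). Set \(\omega=\rho_{\hat u}\sigma\) with \(\hat u=u/\abs u\), where \(\rho_{\hat u}\) is a rotation taking \(e_0\) to \(\hat u\). Then \(R_\omega u\) becomes \(\abs u\,\rho_{\hat u}R_\sigma e_0\), and the integrand depends on \(b(e_0\cdot\sigma)\) times a function \(\Delta\psi\) that is continuous in \((x,x_1,\alpha)\) for each \(\sigma\).

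Since \(\rho_{\hat u}\) cannot be chosen continuously, the cleaner route is different. Approximate \(b\) in \(L^1(\Sph)\) by a continuous \(b_\delta\). The resulting error is bounded by
\[
4\norm\psi_\infty\,a_+(1+4R)\,(4R)^{\gamma}(2R)^{2\gamma}\,\norm{b-b_\delta}_{L^1}\,M_0(f_0)^2,
\]
uniformly in \(t\) and \(k\). With \(b_\delta\) continuous, the map \((x,x_1,\alpha,\omega)\mapsto a\,E^\gamma\,b_\delta(\hat u\cdot\omega)\,\Delta\psi\) is continuous off \(\{u=0\}\). Near \(u=0\) the factor \(E^\gamma\) tends to \(0\) while the rest stays bounded, so the map is continuous everywhere on \(K_{r/2,2R}^2\times[0,1]\times\Sph\). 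Here \(v',v_1'\) depend continuously on the data, \(s\) is continuous for \(\alpha\in(0,1)\), and \(\abs{(1-\alpha)s}\lesssim\alpha^{-1/2}\) is integrable against the bounded \(g\).

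The singularities of \(s\) at \(\alpha\to0,1\) need one more step. For \(\alpha\) in \([\delta,1-\delta]\) everything is uniformly continuous on the compact set; the tails in \(\alpha\) contribute at most \(C\delta\norm g_\infty\norm\psi_\infty\) since \(\abs{\Delta\psi}\leq4\norm\psi_\infty\). Dominated convergence in \((\alpha,\omega)\) then shows that the \(b_\delta\)-version of \(\zeta_K W_\psi\) is in \(C_b(X^2)\), up to an \(O(\delta)\) error.

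Finally, uniform narrow convergence \(\mu_{N_k}\to\mu\) in \(C([0,T];\mathcal M_+(X))\) implies uniform narrow convergence of the products \(\mu_{N_k}(t)\otimes\mu_{N_k}(t)\to\mu_t\otimes\mu_t\) against \(C_b(X^2)\) test functions. I would prove this by approximating on a compact set by finite sums of tensor products \(\phi_1(x)\phi_2(x_1)\), using Stone--Weierstrass together with the uniform tightness from Step~1 of \cref{prop:global-narrow-limit}. Choosing \(r\), \(R\) and \(\delta\) first, and then letting \(k\to\infty\), gives a \(\limsup\) of order \(\varepsilon\), which proves \eqref{eq:collision-identification}.
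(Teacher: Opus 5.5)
Your proposal is correct and follows essentially the same route as the paper. Both arguments localize with the collision-rate tightness \eqref{eq:collision-rate-tightness}, remove $\chi_{N_k}$ once $N_k\geq 4R$, approximate $b$ in $L^1(\Sph)$ by continuous kernels and use dominated convergence to obtain a continuous compactly supported localized kernel, and then pass to the limit uniformly in $t$ through tensor-product (Stone--Weierstrass) approximation. The differences are cosmetic: the paper concludes that the limiting kernel $G_\psi$ is itself continuous, as a uniform limit on compacts, instead of carrying the $b_\delta$-error as a separate term, and your extra truncation near $\alpha\in\{0,1\}$ is unnecessary because $\abs{\Delta\psi}\leq 4\norm{\psi}_\infty$ already gives a bounded dominating function at each fixed $\alpha\in(0,1)$.
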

\begin{proof}
Fix \(\psi\in C_c^1(X)\) and \(T>0\).  Throughout the proof, write
\(\mu_{N_k}(t)=f_{N_k}(t,x)\,dx\).

\noindent\underline{\textbf{Step 1: continuity of the angularly averaged
collision kernel.}}
Define
\begin{align}
 G_{N,\psi}(x,x_1)
 &:=
 \frac12\int_0^1\int_{\Sph}
 a_N(m,m_1,\alpha)E(x,x_1)^\gamma
 b\left(\frac{u}{\abs u}\cdot\omega\right)
 \Delta\psi
 \,d\omega\,d\alpha,
 \label{eq:identification-kernel-N}\\
 G_\psi(x,x_1)
 &:=
 \frac12\int_0^1\int_{\Sph}
 a(m,m_1,\alpha)E(x,x_1)^\gamma
 b\left(\frac{u}{\abs u}\cdot\omega\right)
 \Delta\psi
 \,d\omega\,d\alpha.
 \label{eq:identification-kernel-limit}
\end{align}
Then
\begin{align}
 \mathcal Q_{N_k}(f_{N_k}(t),f_{N_k}(t))[\psi]
 &=
 \int_{X^2}G_{N_k,\psi}(x,x_1)
 \,d\mu_{N_k}(t)(x_1)\,d\mu_{N_k}(t)(x),
 \label{eq:identification-QN-kernel}\\
 \mathcal Q(\mu_t,\mu_t)[\psi]
 &=
 \int_{X^2}G_\psi(x,x_1)
 \,d\mu_t(x_1)\,d\mu_t(x).
 \label{eq:identification-Q-kernel}
\end{align}

We first prove that \(G_\psi\in C(X^2)\).  Fix \(0<r<R\) and set
\begin{equation*}
 A_{r,R}
 :=
 \max_{\substack{r\leq m,m_1\leq R\\0\leq\alpha\leq1}}
 a(m,m_1,\alpha)<\infty.
\end{equation*}
For \((x,x_1)\in K_{r,R}^2\), \eqref{eq:model-reduced-energy} gives
\begin{equation}
 0\leq E(x,x_1)
 \leq m|v|^2+m_1|v_1|^2
 \leq2R^3.
 \label{eq:identification-energy-compact}
\end{equation}

Choose \(b_\ell\in C([-1,1])\), \(b_\ell\geq0\), such that, for any
fixed \(e\in\Sph\),
\begin{equation}
 \norm{b_\ell-b}_{L^1(\Sph)}
 :=
 \int_{\Sph}
 \left|b_\ell(e\cdot\omega)-b(e\cdot\omega)\right|\,d\omega
 \longrightarrow0.
 \label{eq:identification-angular-approximation}
\end{equation}
By rotational invariance, the integral in
\eqref{eq:identification-angular-approximation} is independent of
\(e\).  Define
\begin{align}
 G_\psi^{(\ell)}(x,x_1)
 :=
 \frac12\int_0^1\int_{\Sph}
 &a(m,m_1,\alpha)E(x,x_1)^\gamma
 b_\ell\left(\frac{u}{\abs u}\cdot\omega\right)
 \Delta\psi
 \,d\omega\,d\alpha.
 \label{eq:identification-continuous-angular-kernel}
\end{align}

Let \((x_n,x_{1,n})\to(x,x_1)\) in \(K_{r,R}^2\), and denote the
corresponding quantities by \(u_n,E_n,x_n',x_{1,n}'\).  For fixed
\((\alpha,\omega)\in(0,1)\times\Sph\), if \(u\neq0\), then
\begin{align}
 &a(m_n,m_{1,n},\alpha)E_n^\gamma
 b_\ell\left(\frac{u_n}{\abs{u_n}}\cdot\omega\right)
 \bigl[
 \psi(x_n')+\psi(x_{1,n}')
 -\psi(x_n)-\psi(x_{1,n})
 \bigr]
 \notag\\
 &\qquad\longrightarrow
 a(m,m_1,\alpha)E^\gamma
 b_\ell\left(\frac{u}{\abs u}\cdot\omega\right)
 \bigl[
 \psi(x')+\psi(x_1')
 -\psi(x)-\psi(x_1)
 \bigr].
 \label{eq:identification-pointwise-nonzero-u}
\end{align}
If \(u=0\), then \(E_n\to0\), and
\begin{align}
 &\left|
 a(m_n,m_{1,n},\alpha)E_n^\gamma
 b_\ell\left(\frac{u_n}{\abs{u_n}}\cdot\omega\right)
 \bigl[
 \psi(x_n')+\psi(x_{1,n}')
 -\psi(x_n)-\psi(x_{1,n})
 \bigr]
 \right|
 \notag\\
 &\qquad\leq
 4A_{r,R}\norm{\psi}_\infty
 \norm{b_\ell}_\infty E_n^\gamma
 \longrightarrow0.
 \label{eq:identification-pointwise-zero-u}
\end{align}
Moreover, for every \(n\),
\begin{align}
 &\left|
 a(m_n,m_{1,n},\alpha)E_n^\gamma
 b_\ell\left(\frac{u_n}{\abs{u_n}}\cdot\omega\right)
 \bigl[
 \psi(x_n')+\psi(x_{1,n}')
 -\psi(x_n)-\psi(x_{1,n})
 \bigr]
 \right|\leq
 4A_{r,R}(2R^3)^\gamma
 \norm{b_\ell}_\infty\norm{\psi}_\infty.
 \label{eq:identification-angular-dominating-function}
\end{align}
 Hence
\eqref{eq:identification-pointwise-nonzero-u},
\eqref{eq:identification-pointwise-zero-u}, and dominated convergence
give
\(
 G_\psi^{(\ell)}(x_n,x_{1,n})
 \to G_\psi^{(\ell)}(x,x_1).
\)
Thus \(G_\psi^{(\ell)}\in C(K_{r,R}^2)\).  Furthermore,
\begin{align}
 \sup_{(x,x_1)\in K_{r,R}^2}
 \left|G_\psi^{(\ell)}(x,x_1)-G_\psi(x,x_1)\right|
 &\leq
 2A_{r,R}(2R^3)^\gamma\norm{\psi}_\infty
 \norm{b_\ell-b}_{L^1(\Sph)}
 \longrightarrow0.
 \label{eq:identification-uniform-angular-limit}
\end{align}
Therefore \(G_\psi\in C(K_{r,R}^2)\).  Since \(0<r<R\) were
arbitrary,
\(
 G_\psi\in C(X^2).
 \)

\noindent\underline{\textbf{Step 2: compact localization of the
collision kernel.}}
Fix \(0<r<R\).  Choose
\(\vartheta_{r,R}\in C_c(X;[0,1])\) such that
\begin{equation}
 \vartheta_{r,R}=1\quad\text{on }K_{r,R},
 \qquad
 \operatorname{supp}\vartheta_{r,R}
 \subset K_{r/2,2R},
 \label{eq:identification-one-particle-cutoff}
\end{equation}
and set
\begin{equation}
 \Theta_{r,R}(x,x_1)
 :=\vartheta_{r,R}(x)\vartheta_{r,R}(x_1),
 \qquad
 H_{r,R,\psi}(x,x_1)
 :=\Theta_{r,R}(x,x_1)G_\psi(x,x_1).
 \label{eq:identification-two-particle-cutoff}
\end{equation}
Then
\begin{equation}
 H_{r,R,\psi}\in C_c(X^2),
 \qquad
 0\leq1-\Theta_{r,R}
 \leq\one_{X^2\setminus K_{r,R}^2}.
 \label{eq:identification-cutoff-properties}
\end{equation}

On \(\operatorname{supp}\Theta_{r,R}\), one has
\(m,m_1\leq2R\), and hence \(S=m+m_1\leq4R\).  Hence for
\(N_k\geq4R\),
\begin{equation}
 \chi_{N_k}(S)=1,
 \qquad
 a_{N_k}(m,m_1,\alpha)=a(m,m_1,\alpha),
 \qquad
 \Theta_{r,R}G_{N_k,\psi}=H_{r,R,\psi}.
 \label{eq:identification-cutoff-removal-compact}
\end{equation}

The estimate \(\abs{\Delta\psi}\leq4\norm{\psi}_\infty\) gives
\begin{align}
 \abs{G_{N,\psi}(x,x_1)}
 &\leq
 2\norm{\psi}_\infty
 \int_0^1\int_{\Sph}
 a_N(m,m_1,\alpha)E^\gamma
 b\left(\frac{u}{\abs u}\cdot\omega\right)
 \,d\omega\,d\alpha,
 \label{eq:identification-kernel-rate-bound-N}\\
 \abs{G_\psi(x,x_1)}
 &\leq
 2\norm{\psi}_\infty
 \int_0^1\int_{\Sph}
 a(m,m_1,\alpha)E^\gamma
 b\left(\frac{u}{\abs u}\cdot\omega\right)
 \,d\omega\,d\alpha.
 \label{eq:identification-kernel-rate-bound-limit}
\end{align}
Consequently,
\begin{align}
 &\int_{X^2}
 (1-\Theta_{r,R})\abs{G_{N,\psi}(x,x_1)}
 \,d\mu_N(t)(x_1)\,d\mu_N(t)(x)\leq
 2\norm{\psi}_\infty
 \mathcal R_N\bigl(X^2\setminus K_{r,R}^2,t\bigr),
 \label{eq:identification-tail-bound-N}\\
 &\int_{X^2}
 (1-\Theta_{r,R})\abs{G_\psi(x,x_1)}
 \,d\mu_t(x_1)\,d\mu_t(x)\leq
 2\norm{\psi}_\infty
 \mathcal R\bigl(X^2\setminus K_{r,R}^2,t\bigr).
 \label{eq:identification-tail-bound-limit}
\end{align}

\noindent\underline{\textbf{Step 3: uniform convergence of the
localized product integrals.}}
Let \(\varepsilon>0\).  Since
\(H_{r,R,\psi}\in C_c(X^2)\), by Stone--Weierstrass theorem, there exist
\(\phi_j,\phi_{1,j}\in C_c(X)\), \(1\leq j\leq J\), such that
\begin{equation}
 H_{r,R,\psi}^{(J)}(x,x_1)
 :=\sum_{j=1}^J\phi_j(x)\phi_{1,j}(x_1),
 \qquad
 \norm{H_{r,R,\psi}-H_{r,R,\psi}^{(J)}}_\infty
 <\varepsilon.
 \label{eq:identification-tensor-approximation}
\end{equation}
For every \(t\in[0,T]\),
\begin{align}
 &\left|
 \int_{X^2}H_{r,R,\psi}\,d\mu_{N_k}(t)\,d\mu_{N_k}(t)
 -
 \int_{X^2}H_{r,R,\psi}\,d\mu_t\,d\mu_t
 \right|\leq
 2M_0(f_0)^2\varepsilon
 \notag\\
 &\qquad+
 \sum_{j=1}^J
 \left|
 \left(\int_X\phi_j\,d\mu_{N_k}(t)\right)
 \left(\int_X\phi_{1,j}\,d\mu_{N_k}(t)\right)
 -
 \left(\int_X\phi_j\,d\mu_t\right)
 \left(\int_X\phi_{1,j}\,d\mu_t\right)
 \right|.
 \label{eq:identification-product-first-bound}
\end{align}
For each \(1\leq j\leq J\),
\begin{align}
 &\left|
 \left(\int_X\phi_j\,d\mu_{N_k}(t)\right)
 \left(\int_X\phi_{1,j}\,d\mu_{N_k}(t)\right)
 -
 \left(\int_X\phi_j\,d\mu_t\right)
 \left(\int_X\phi_{1,j}\,d\mu_t\right)
 \right|
 \notag\\
 &\quad\leq
 M_0(f_0)\norm{\phi_{1,j}}_\infty
 \left|\int_X\phi_j\,d\mu_{N_k}(t)
       -\int_X\phi_j\,d\mu_t\right|
 \notag\\
 &\qquad+
 M_0(f_0)\norm{\phi_j}_\infty
 \left|\int_X\phi_{1,j}\,d\mu_{N_k}(t)
       -\int_X\phi_{1,j}\,d\mu_t\right|.
 \label{eq:identification-product-factor-bound}
\end{align}
Taking the supremum over \(t\in[0,T]\), then using
\eqref{eq:narrow-compact-time}, gives
\begin{align}
 &\limsup_{k\to\infty}
 \sup_{0\leq t\leq T}
 \left|
 \int_{X^2}H_{r,R,\psi}\,d\mu_{N_k}(t)\,d\mu_{N_k}(t)
 -
 \int_{X^2}H_{r,R,\psi}\,d\mu_t\,d\mu_t
 \right|
 \leq2M_0(f_0)^2\varepsilon.
 \label{eq:identification-localized-limsup}
\end{align}
Letting \(\varepsilon\downarrow0\), we obtain
\begin{equation}
 \lim_{k\to\infty}
 \sup_{0\leq t\leq T}
 \left|
 \int_{X^2}H_{r,R,\psi}\,d\mu_{N_k}(t)\,d\mu_{N_k}(t)
 -
 \int_{X^2}H_{r,R,\psi}\,d\mu_t\,d\mu_t
 \right|=0.
 \label{eq:identification-localized-product-limit}
\end{equation}

\noindent\underline{\textbf{Step 4: removal of the localization.}}
For \(k\) sufficiently large that \(N_k\geq4R\),
\eqref{eq:identification-QN-kernel},
\eqref{eq:identification-Q-kernel}, and
\eqref{eq:identification-cutoff-removal-compact} give
\begin{align}
 &\left|
 \mathcal Q_{N_k}(f_{N_k}(t),f_{N_k}(t))[\psi]
 -\mathcal Q(\mu_t,\mu_t)[\psi]
 \right|
 \notag\\
 &\quad\leq
 \left|
 \int_{X^2}H_{r,R,\psi}
 \,d\mu_{N_k}(t)\,d\mu_{N_k}(t)
 -
 \int_{X^2}H_{r,R,\psi}
 \,d\mu_t\,d\mu_t
 \right|
 \notag\\
 &\qquad+
 \int_{X^2}(1-\Theta_{r,R})
 \abs{G_{N_k,\psi}}
 \,d\mu_{N_k}(t)\,d\mu_{N_k}(t)
 \notag\\
 &\qquad+
 \int_{X^2}(1-\Theta_{r,R})
 \abs{G_\psi}
 \,d\mu_t\,d\mu_t.
 \label{eq:identification-full-decomposition}
\end{align}
Using \eqref{eq:identification-tail-bound-N} and
\eqref{eq:identification-tail-bound-limit}, we obtain
\begin{align}
 &\left|
 \mathcal Q_{N_k}(f_{N_k}(t),f_{N_k}(t))[\psi]
 -\mathcal Q(\mu_t,\mu_t)[\psi]
 \right|
 \notag\\
 &\quad\leq
 \left|
 \int_{X^2}H_{r,R,\psi}
 \,d\mu_{N_k}(t)\,d\mu_{N_k}(t)
 -
 \int_{X^2}H_{r,R,\psi}
 \,d\mu_t\,d\mu_t
 \right|
 \notag\\
 &\qquad+
 2\norm{\psi}_\infty
 \left[
 \mathcal R_{N_k}\bigl(X^2\setminus K_{r,R}^2,t\bigr)
 +
 \mathcal R\bigl(X^2\setminus K_{r,R}^2,t\bigr)
 \right].
 \label{eq:identification-full-rate-bound}
\end{align}
Taking the supremum over \(t\in[0,T]\), passing to the upper limit
\(k\to\infty\), and applying
\eqref{eq:identification-localized-product-limit}, we find
\begin{align}
 &\limsup_{k\to\infty}\sup_{0\leq t\leq T}
 \left|
 \mathcal Q_{N_k}(f_{N_k}(t),f_{N_k}(t))[\psi]
 -\mathcal Q(\mu_t,\mu_t)[\psi]
 \right|
 \notag\\
 &\quad\leq
 2\norm{\psi}_\infty
 \sup_{0\leq t\leq T}
 \left[
 \sup_{N\geq1}
 \mathcal R_N\bigl(X^2\setminus K_{r,R}^2,t\bigr)
 +
 \mathcal R\bigl(X^2\setminus K_{r,R}^2,t\bigr)
 \right].
 \label{eq:identification-final-tail-bound}
\end{align}
Finally, \eqref{eq:collision-rate-tightness} yields
\begin{align*}
 &\limsup_{k\to\infty}\sup_{0\leq t\leq T}
 \left|
 \mathcal Q_{N_k}(f_{N_k}(t),f_{N_k}(t))[\psi]
 -\mathcal Q(\mu_t,\mu_t)[\psi]
 \right|\\
 &\quad\leq
 2\norm{\psi}_\infty
 \lim_{r\downarrow0}\lim_{R\uparrow\infty}
 \sup_{0\leq t\leq T}
 \left[
 \sup_{N\geq1}
 \mathcal R_N\bigl(X^2\setminus K_{r,R}^2,t\bigr)
 +
 \mathcal R\bigl(X^2\setminus K_{r,R}^2,t\bigr)
 \right]
 =0.
\end{align*}
This proves \eqref{eq:collision-identification}.
\end{proof}
\begin{corollary}[Measure equation and total-variation regularity]
\label{cor:measure-equation}
The family
\(
 (\mathbf Q(\mu_t,\mu_t))_{t\geq0}
\)
is a finite signed-measure kernel on \(X\).  Moreover, for every
\(0\leq s\leq t\) and every bounded Borel \(\psi:X\to\R\),
\begin{equation}
 \int_X\psi\,d\mu_t-\int_X\psi\,d\mu_s
 =\int_s^t\mathcal Q(\mu_\tau,\mu_\tau)[\psi]\,d\tau.
 \label{eq:measure-bounded-test}
\end{equation}
In particular,
\begin{equation}
 \norm{\mu_t-\mu_s}_{\mathrm{TV}}
 \leq2C_{\mathrm{rate}}\abs{t-s}.
 \label{eq:measure-TV-Lipschitz}
\end{equation}
\end{corollary}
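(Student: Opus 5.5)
The plan is to pass to the limit in the truncated weak identity \eqref{eq:truncated-weak-identity} along the subsequence $N_k$ of \cref{prop:global-narrow-limit}, first for smooth compactly supported tests. Then extend to bounded Borel tests by a monotone-class argument. The TV estimate will follow from the uniform bound \eqref{eq:measure-collision-bounded-test}.

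\emph{Kernel property.} The claim that $(\mathbf Q(\mu_t,\mu_t))_{t\ge0}$ is a finite signed-measure kernel means that $t\mapsto\mathbf Q(\mu_t,\mu_t)(A)$ is Borel for each Borel $A$. Each $\mathbf Q(\mu_t,\mu_t)$ is finite with total variation at most $2C_{\mathrm{rate}}$ by \cref{lem:measure-valued-collision-operator}. For $\psi\in C_c^1(X)$, \cref{prop:collision-identification} shows that $t\mapsto\mathcal Q(\mu_t,\mu_t)[\psi]$ is a uniform limit on $[0,T]$ of the maps $t\mapsto\mathcal Q_{N_k}(f_{N_k}(t),f_{N_k}(t))[\psi]$. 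Each of these is continuous, since it equals $\int\psi\,\mathbf Q_{N_k}\,dx$ and $\mathbf Q_{N_k}(f_{N_k}(t),f_{N_k}(t))$ is $L^1$-valued. Rather than rely on that regularity, it suffices to note that the maps are measurable, which holds because $f_{N_k}\in W^{1,\infty}(0,\infty;L^1)$. Hence $t\mapsto\mathcal Q(\mu_t,\mu_t)[\psi]$ is Borel and bounded by $2C_{\mathrm{rate}}\|\psi\|_\infty$.

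\emph{Identity for $C_c^1$ tests.} Write \eqref{eq:truncated-weak-identity} for $N=N_k$ between $s$ and $t$. The left side converges to $\int\psi\,d\mu_t-\int\psi\,d\mu_s$ by \eqref{eq:narrow-compact-time}. The right side converges by \eqref{eq:collision-identification}, using uniform convergence on $[s,t]$. This gives \eqref{eq:measure-bounded-test} for $\psi\in C_c^1(X)$.

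\emph{Extension to bounded Borel tests; main obstacle.} Let $\mathcal H$ be the class of bounded Borel $\psi$ for which (i) $t\mapsto\mathcal Q(\mu_t,\mu_t)[\psi]$ is Borel and (ii) \eqref{eq:measure-bounded-test} holds. The class $\mathcal H$ is a vector space containing constants, since $\Delta 1=0$ and $\mu_t(X)$ is constant. It is closed under bounded pointwise monotone (indeed bounded pointwise) convergence. Indeed, $\int\psi_n\,d\mu_t\to\int\psi\,d\mu_t$ by dominated convergence. Likewise $\mathcal Q(\mu_\tau,\mu_\tau)[\psi_n]\to\mathcal Q(\mu_\tau,\mu_\tau)[\psi]$ for each $\tau$ by dominated convergence against the finite measure $\Lambda_{\mu_\tau}$ of \eqref{eq:collision-parameter-measure}. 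Measurability is preserved by the pointwise limit, and the time integrals converge by dominated convergence with bound $2C_{\mathrm{rate}}\sup_n\|\psi_n\|_\infty$. Since $C_c^1(X)$ is closed under multiplication and generates the Borel $\sigma$-algebra of $X$, the functional monotone class theorem gives that $\mathcal H$ contains all bounded Borel functions. Taking $\psi=\one_A$ yields the kernel measurability. The delicate point is exactly this measurability in $t$ for non-continuous $\psi$; the monotone class argument handles it without any additional continuity of $t\mapsto\mu_t$ in TV.

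\emph{TV Lipschitz bound.} For $\|\psi\|_\infty\le1$ Borel, \eqref{eq:measure-bounded-test} and \eqref{eq:measure-collision-bounded-test} give
\[
\Bigl|\int\psi\,d\mu_t-\int\psi\,d\mu_s\Bigr|\le 2C_{\mathrm{rate}}|t-s|.
\]
Each $\mu_\tau$ satisfies \eqref{eq:limit-physical-moments}, so the lemma applies uniformly in $\tau$. Taking the supremum over $\psi$, or choosing $\psi=\one_{P}-\one_{P^c}$ for a Hahn set $P$, yields \eqref{eq:measure-TV-Lipschitz}.
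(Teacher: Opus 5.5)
Your proof is correct, but it is organized differently from the paper's.

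The paper proceeds in four stages:
\begin{enumerate}
\item It proves the identity only for $\psi\in C_c^1(X)$.
\item It gets \eqref{eq:measure-TV-Lipschitz} from that identity via \eqref{eq:measure-collision-bounded-test}, density of $C_c^1(X)$ in $C_0(X)$, and the duality formula for the total variation.
\item It uses this TV continuity to make $\tau\mapsto\mu_\tau(A)$ measurable. It then propagates measurability to $\mu_\tau\otimes\mu_\tau$, to $\Lambda_{\mu_\tau}$, and to its four push-forwards, and concludes that $\mathbf Q(\mu_\tau,\mu_\tau)$ is a kernel.
\item Only then does it reach bounded Borel tests. It identifies $\mu_t-\mu_s$ with the finite signed measure $A\mapsto\int_s^t\mathbf Q(\mu_\tau,\mu_\tau)(A)\,d\tau$, using their agreement on $C_c^1$ tests.
\end{enumerate}

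You reverse the order. You build measurability in $\tau$ into the class $\mathcal H$, and you use dominated convergence against the finite measures $\Lambda_{\mu_\tau}$ together with the uniform bound \eqref{eq:measure-collision-bounded-test}. One application of the functional monotone class theorem then gives the bounded Borel identity and the kernel property (take $\psi=\one_A$) simultaneously. The TV bound follows from a Hahn-set test, with no duality argument.

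What your route buys: it is shorter, and it avoids both the structural kernel argument and the check that the time-integrated collision measure is countably additive. Its only extra input is measurability in $\tau$ for $C_c^1$ tests. You correctly obtain this as a uniform limit of the approximate rates, and the paper uses the same fact implicitly in its first step.

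What the paper's route buys: it shows that the individual gain and loss marginals, not just their combination $\mathbf Q(\mu_\tau,\mu_\tau)$, are measurable in time. It also derives that measurability from TV continuity rather than from the equation.

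A small caveat on your measurability paragraph. Your aside that the approximate maps are continuous because $\mathbf Q_{N_k}$ is $L^1$-valued is not justified as stated. Also, measurability deduced from $f_{N_k}\in W^{1,\infty}(0,\infty;L^1(X))$ is a priori only Lebesgue measurability, since the density representation holds only for a.e.\ $t$. You explicitly do not rely on continuity, and Lebesgue measurability suffices everywhere it is used, so this does not affect the argument.
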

\begin{proof}
\noindent\underline{\textbf{Step 1: the identity for smooth tests.}}
Let \(\psi\in C_c^1(X)\).  By
\eqref{eq:truncated-weak-identity},
\begin{equation*}
 \int_X\psi\,d\mu_{N_k}(t)-\int_X\psi\,d\mu_{N_k}(s)
 =
 \int_s^t
 \mathcal Q_{N_k}(f_{N_k}(\tau),f_{N_k}(\tau))[\psi]\,d\tau.
\end{equation*}
Passing to the limit using \eqref{eq:narrow-compact-time} and
\eqref{eq:collision-identification} gives
\begin{equation}
 \int_X\psi\,d\mu_t-\int_X\psi\,d\mu_s
 =
 \int_s^t\mathcal Q(\mu_\tau,\mu_\tau)[\psi]\,d\tau.
 \label{eq:measure-smooth-test}
\end{equation}

\noindent\underline{\textbf{Step 2: total-variation regularity.}}
By \eqref{eq:measure-collision-bounded-test},
\begin{equation*}
 \left|\int_X\psi\,d(\mu_t-\mu_s)\right|
 \leq
 2C_{\mathrm{rate}}\abs{t-s}\norm{\psi}_\infty,
 \qquad \psi\in C_c^1(X).
\end{equation*}
Since \(C_c^1(X)\) is dense in \(C_0(X)\) under the uniform norm, the
same estimate holds for every \(\psi\in C_0(X)\).  The dual
characterization of the total variation of a finite signed Radon
measure therefore yields
\(
 \norm{\mu_t-\mu_s}_{\mathrm{TV}}
 \leq2C_{\mathrm{rate}}\abs{t-s},
\)
which is \eqref{eq:measure-TV-Lipschitz}.

\noindent\underline{\textbf{Step 3: the collision-measure kernel.}}
The total-variation continuity \eqref{eq:measure-TV-Lipschitz} implies that
\(\tau\mapsto\mu_\tau(A)\) is measurable for every Borel set
\(A\subset X\).  Thus \((\mu_\tau)_{\tau\geq0}\) is a finite-measure
kernel. Moreover, since
\[
 (\mu_\tau\otimes\mu_\tau)(A\times B)
 =\mu_\tau(A)\mu_\tau(B)
\]
is measurable in \(\tau\) for every Borel rectangle \(A\times B\),
a monotone-class argument shows that
\((\mu_\tau\otimes\mu_\tau)_{\tau\geq0}\) is a finite-measure kernel
on \(X^2\).
Since the collision weight in
\eqref{eq:collision-parameter-measure} is nonnegative and Borel, and
the four maps
\begin{equation*}
 (x,x_1,\alpha,\omega)\mapsto x,\qquad
 (x,x_1,\alpha,\omega)\mapsto x_1,\qquad
 (x,x_1,\alpha,\omega)\mapsto x',\qquad
 (x,x_1,\alpha,\omega)\mapsto x_1'
\end{equation*}
are Borel, the collision parameter measures
\((\Lambda_{\mu_\tau})_{\tau\geq0}\) and their four push-forwards are
finite-measure kernels.  By
\eqref{eq:measure-collision-definition},
\((\mathbf Q(\mu_\tau,\mu_\tau))_{\tau\geq0}\) is therefore a finite
signed-measure kernel.

\noindent\underline{\textbf{Step 4: extension to bounded Borel tests.}}
For fixed \(0\leq s\leq t\), define
\begin{equation*}
 \mathbf M_{s,t}(A)
 :=
 \int_s^t\mathbf Q(\mu_\tau,\mu_\tau)(A)\,d\tau,
 \qquad A\subset X\ \text{Borel}.
\end{equation*}
It is a finite signed Radon measure, and
\begin{equation*}
 \norm{\mathbf M_{s,t}}_{\mathrm{TV}}
 \leq
 \int_s^t
 \norm{\mathbf Q(\mu_\tau,\mu_\tau)}_{\mathrm{TV}}\,d\tau
 \leq2C_{\mathrm{rate}}\abs{t-s}.
\end{equation*}
By \eqref{eq:measure-collision-duality} and
\eqref{eq:measure-smooth-test}, the measures
\(\mathbf M_{s,t}\) and \(\mu_t-\mu_s\) agree against every
\(\psi\in C_c^1(X)\).  Hence they are equal as finite signed Radon
measures:
\begin{equation}
 \mu_t-\mu_s=\mathbf M_{s,t}.
 \label{eq:measure-integral-identity}
\end{equation}
Testing \eqref{eq:measure-integral-identity} against an arbitrary
bounded Borel function \(\psi\) gives
\eqref{eq:measure-bounded-test}.
\end{proof}
\subsection{Absolute continuity and the strong formulation}

The preceding \cref{cor:measure-equation} provides the two ingredients needed below:
the measure equation \eqref{eq:measure-bounded-test} is valid for bounded Borel tests, and the
collision measures form a signed-measure kernel \((\mathbf Q(\mu_\tau,\mu_\tau))_{\tau\geq0}\) in time.  We use the
first property to exclude singular components of the limiting
measures \(\mu_t\) and the second to recover a jointly measurable
\(L^1(X)\)-valued collision term \(\mathbf Q(f(t),f(t))\).  

\begin{lemma}[Absolute continuity of mixed gain marginals]
\label{lem:cross-gain-absolute-continuity}
Let \(\rho,\nu\) be finite nonnegative Radon measures on \(X\), and
define
\begin{align}
 d\Lambda_{\rho,\nu}(x,x_1,\alpha,\omega)
 &:=
 a(m,m_1,\alpha)E(x,x_1)^\gamma
 b\left(\frac{u}{\abs u}\cdot\omega\right)
 \,d\omega\,d\alpha\,d\nu(x_1)\,d\rho(x).
 \label{eq:cross-collision-parameter-measure}
\end{align}
Assume that \(\Lambda_{\rho,\nu}\) is finite.  Let
\(\Gamma_0(\rho,\nu)\) and \(\Gamma_1(\rho,\nu)\) be its push-forwards
under the two outgoing maps:
\begin{align}
 \Gamma_0(\rho,\nu)
 &:=
 \bigl[(x,x_1,\alpha,\omega)\mapsto x'\bigr]_\#
 \Lambda_{\rho,\nu},
 \label{eq:first-cross-gain}\\
 \Gamma_1(\rho,\nu)
 &:=
 \bigl[(x,x_1,\alpha,\omega)\mapsto x_1'\bigr]_\#
 \Lambda_{\rho,\nu}.
 \label{eq:second-cross-gain}
\end{align}
If either \(\rho\ll dx\) or \(\nu\ll dx\), then
\begin{equation}
 \Gamma_0(\rho,\nu)\ll dx,
 \qquad
 \Gamma_1(\rho,\nu)\ll dx.
 \label{eq:cross-gain-absolute-continuity}
\end{equation}
Here \(dx\) denotes Lebesgue measure on the space
\(X\).
Consequently, if \(\mu=f\,dx+\sigma\) is the Lebesgue decomposition
of a finite nonnegative measure and \(\Lambda_{\mu,\mu}\) is finite,
then in the expansion
\begin{equation}
 \Gamma_i(\mu,\mu)
 =
 \Gamma_i(f\,dx,f\,dx)
 +\Gamma_i(f\,dx,\sigma)
 +\Gamma_i(\sigma,f\,dx)
 +\Gamma_i(\sigma,\sigma),
 \qquad i=0,1,
\end{equation}
the first three terms are absolutely continuous.  Hence only
\(\Gamma_i(\sigma,\sigma)\) can contribute to the singular part of an
outgoing gain marginal.
\end{lemma}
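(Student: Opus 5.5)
The plan is a Fubini argument that reduces the claim to a statement about the outgoing maps for frozen auxiliary variables. Let \(Z\subset X\) be Borel with Lebesgue measure zero. Take first the case \(\rho\ll dx\). Since the weight \(a E^\gamma b\) is nonnegative and Borel, Tonelli's theorem gives
\begin{equation*}
 \Gamma_0(\rho,\nu)(Z)
 =\int_{\Sph}\int_0^1\int_X\left(\int_X\one_Z(x')\,a\,E^\gamma b\,d\rho(x)\right)d\nu(x_1)\,d\alpha\,d\omega .
\end{equation*}
It therefore suffices to fix \((x_1,\alpha,\omega)\in X\times(0,1)\times\Sph\) and show that the set \(\{x:\ x'\in Z,\ u\neq0\}\) is Lebesgue-null. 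The set \(\{u=0\}\) may be dropped because there \(E^\gamma=0\), so it carries no \(\Lambda_{\rho,\nu}\)-mass. Once the preimage is null, \(\rho\ll dx\) makes the inner integral vanish, and so \(\Gamma_0(\rho,\nu)(Z)=0\). The same argument applies to \(\Gamma_1\), using the map \(x\mapsto x_1'\).

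The key step is the analysis of the frozen map \(T:x=(m,v)\mapsto x'=(\alpha S,\ V+(1-\alpha)sR_\omega u)\) on the open set \(U=\{m>0,\ v\neq v_1\}\). By \eqref{eq:model-first-output} this map is \(C^1\) there, because \(s\) depends smoothly on \(m\) through \(\theta=m/S\). Its Jacobian is block triangular: \(m'=\alpha(m+m_1)\) depends only on \(m\), with \(\partial m'/\partial m=\alpha\neq0\). The \(v\)-block is \(\frac mS I+(1-\alpha)sR_\omega\). Since \(R_\omega\) is a reflection with eigenvalues \(\pm1\), its determinant equals \((m/S+(1-\alpha)s)(m/S-(1-\alpha)s)^{d-1}\). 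Squaring the equation \(\theta=(1-\alpha)s\) gives \(\theta^2\alpha(1-\alpha)=(1-\alpha)^2\theta(1-\theta)\), that is, \(\theta=1-\alpha\). So the Jacobian vanishes only on the hyperplane \(\{m=(1-\alpha)m_1/\alpha\}\), which is Lebesgue-null. I will then invoke the standard fact that a \(C^1\) map whose Jacobian is nonzero off a null set pulls back null sets to null sets. To prove it, cover \(U\) minus the singular hyperplane by countably many open sets on which \(T\) is a diffeomorphism onto its image, and apply the change of variables on each. For \(x\mapsto x_1'\) the computation is identical. The \(v\)-block becomes \(\frac mS I-\alpha sR_\omega\), which is singular only where \(\theta=\alpha\), again a single value of \(m\).

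For the case \(\nu\ll dx\), I will reverse the roles: freeze \(x\), integrate first in \(x_1\), and study \(x_1\mapsto x'\) and \(x_1\mapsto x_1'\). Here \(\partial m'/\partial m_1=\alpha\) and the \(v_1\)-block is \(\frac{m_1}{S}I-(1-\alpha)sR_\omega\) (respectively \(\frac{m_1}{S}I+\alpha sR_\omega\)). These degenerate only where \(1-\theta=(1-\alpha)s\), i.e.\ \(\theta=\alpha\) (respectively \(\theta=1-\alpha\)), again a single value of \(m_1\). The rest of the argument is unchanged.

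For the consequence, I will use that \(\Lambda_{\rho,\nu}\) is bilinear in \((\rho,\nu)\). Writing \(\mu=f\,dx+\sigma\), each of the four pieces \(\Lambda_{f dx,f dx},\dots,\Lambda_{\sigma,\sigma}\) is dominated by \(\Lambda_{\mu,\mu}\) and hence finite. The first part of the lemma then applies to the three pieces containing \(f\,dx\), and push-forward is additive. The main obstacle is the Jacobian and null-preimage step. One has to check that \(s\) is genuinely smooth in \(m\) on \(U\) and that the degenerate locus is exactly one hyperplane in the mass variable. One must also make sure that the borderline Borel convention \eqref{eq:zero-relative-velocity-convention} is harmless, which it is because it only affects the set \(u=0\), where the weight vanishes.
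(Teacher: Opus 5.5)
Your proposal is correct and follows essentially the same route as the paper: Tonelli with \((x_1,\alpha,\omega)\) frozen, the block-triangular Jacobian whose determinant vanishes only on a single mass hyperplane, a countable local-diffeomorphism cover giving null preimages, and then bilinearity for the Lebesgue decomposition. The differences are minor. For \(\nu\ll dx\) you compute the \(x_1\)-Jacobians directly, whereas the paper reduces to the first case through the swap \((x,x_1,\alpha,\omega)\mapsto(x_1,x,1-\alpha,-\omega)\). In your determinant the power \(d-1\) should sit on \(\theta+(1-\alpha)s\) rather than on \(\theta-(1-\alpha)s\), because the eigenvalue \(-1\) of \(R_\omega\) is simple; this does not affect your conclusion that the Jacobian can vanish only where \(\theta=1-\alpha\).
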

\begin{proof}
Let \(A\subset X\) be Borel with \(dx(A)=0\).

\noindent\underline{\textbf{Step 1: Jacobians.}}
Assume first that \(d\rho=f\,dx\).  Fix
\(x_1=(m_1,v_1)\), \(\alpha\in(0,1)\), and \(\omega\in\Sph\), and set
\(
 F_0(x):=x',
 F_1(x):=x_1'.
\)
Since \(D_vV=\theta I_d\) and \(D_vu=I_d\),
\begin{equation}
 D_vv'=\theta I_d+(1-\alpha)sR_\omega,
 \qquad
 D_vv_1'=\theta I_d-\alpha sR_\omega.
\end{equation}
Moreover, \(m'=\alpha(m+m_1)\) and
\(m_1'=(1-\alpha)(m+m_1)\) are independent of \(v\).  Therefore,
\begin{equation}
 D_xF_0=
 \begin{pmatrix}
  \alpha&0\\
  \partial_m v'&\theta I_d+(1-\alpha)sR_\omega
 \end{pmatrix},
 \qquad
 D_xF_1=
 \begin{pmatrix}
  1-\alpha&0\\
  \partial_m v_1'&\theta I_d-\alpha sR_\omega
 \end{pmatrix}.
\end{equation}
Since \(R_\omega\) has eigenvalue \(-1\) in the \(\omega\)-direction
and eigenvalue \(1\) on \(\omega^\perp\),
\begin{align*}
 \abs{\det D_xF_0}
 &=
 \alpha
 \abs{\theta-(1-\alpha)s}
 \bigl(\theta+(1-\alpha)s\bigr)^{d-1},\\
 \abs{\det D_xF_1}
 &=
 (1-\alpha)
 \bigl(\theta+\alpha s\bigr)
 \abs{\theta-\alpha s}^{d-1},
\end{align*}
For the fixed parameters \((x_1,\alpha,\omega)\), define the critical
sets
\begin{equation}
 Z_i:=\{x\in X:\det D_xF_i(x)=0\},
 \qquad i=0,1.
\end{equation}
The first determinant can vanish only when
\(
 \alpha=1-\theta.
\)
For \(d\geq2\), the second determinant can vanish only when
\(
 \alpha=\theta.
\)
When \(d=1\),
\(\abs{\det D_xF_1}=(1-\alpha)(\theta+\alpha s)>0\).
It follows that
\begin{equation}
 Z_0\subset\{x\in X:\alpha=1-\theta\}=\left\{\dfrac{(1-\alpha)m_1}{\alpha}\right\}\times\R^d,
 \qquad
 Z_1\subset\{x\in X:\alpha=\theta\}=\left\{\dfrac{\alpha m_1}{1-\alpha}\right\}\times\R^d.
 \label{eq:cross-gain-two-resonances}
\end{equation}
In particular, \(Z_1=\varnothing\) when \(d=1\). Hence $dx(Z_i)=0$ for \(i=0,1\).

\noindent\underline{\textbf{Step 2: null preimages.}}
Fix \(i\in\{0,1\}\).  At every point of \(X\setminus Z_i\), the
inverse-function theorem gives a neighborhood on which \(F_i\) is a
\(C^1\)-diffeomorphism.  By second countability, there are countably
many such neighborhoods \(U_{i,n}\) covering \(X\setminus Z_i\), with
locally Lipschitz inverse maps
\begin{equation*}
 G_{i,n}:F_i(U_{i,n})\longrightarrow U_{i,n}.
\end{equation*}
It follows that
\begin{equation}\label{eq:cross-gain-preimage-decomposition}
 F_i^{-1}(A)
 \subset
 Z_i\cup
 \bigcup_{n=1}^{\infty}
 G_{i,n}\bigl(A\cap F_i(U_{i,n})\bigr).
\end{equation}
For every \(n\), the set
\(A\cap F_i(U_{i,n})\) is Lebesgue-null because it is contained in
\(A\).  Since \(G_{i,n}\) is Lipschitz,
\(G_{i,n}(A\cap F_i(U_{i,n}))\) is also Lebesgue-null.  Since \(dx(Z_i)=0\), \eqref{eq:cross-gain-preimage-decomposition} gives
\(dx(F_i^{-1}(A))=0.\)
Tonelli's theorem now gives
\begin{align*}
 \Gamma_i(\rho,\nu)(A)
 &=
 \int_X\int_0^1\int_{\Sph}
 \left[
 \int_{F_i^{-1}(A)}
 a(m,m_1,\alpha)E(x,x_1)^\gamma
 b\left(\frac{u}{\abs u}\cdot\omega\right)
 f(x)\,dx
 \right]
 d\omega\,d\alpha\,d\nu(x_1)
 =0.
\end{align*}
Hence \(\Gamma_0(\rho,\nu),\Gamma_1(\rho,\nu)\ll dx.\)
If instead \(\nu\ll dx\), apply the preceding argument after the
Borel change of variables
\(
 (x,x_1,\alpha,\omega)
 \mapsto
 (x_1,x,1-\alpha,-\omega),
\)
which exchanges the two incoming variables and the two
outgoing labels, while preserving the collision weight by the
symmetries in
\eqref{eq:model-fixed-share}--\eqref{eq:model-rate-disintegration}.  Thus the same
conclusion holds when \(\nu\ll dx\).

\noindent\underline{\textbf{Step 3: the Lebesgue decomposition.}}
Let \(\mu=f\,dx+\sigma\).  By bilinearity,
\begin{equation*}
 \Gamma_i(\mu,\mu)
 =
 \Gamma_i(f\,dx,f\,dx)
 +\Gamma_i(f\,dx,\sigma)
 +\Gamma_i(\sigma,f\,dx)
 +\Gamma_i(\sigma,\sigma),
 \qquad i=0,1.
\end{equation*}
The first three terms are absolutely continuous by Step~2.  Hence only
\(\Gamma_i(\sigma,\sigma)\) can contribute to the singular part of
\(\Gamma_i(\mu,\mu)\), proving the final assertion.
\end{proof}
\begin{proposition}[Absolute continuity and completion of the global
construction]
\label{prop:absolute-continuity-strong}
For every \(t\geq0\), the measure \(\mu_t\) is absolutely continuous:
\(\mu_t=f(t,x)\,dx\) for a nonnegative density \(f(t)\in L^1(X)\).
This density is the global integral weak solution asserted in
\cref{thm:model-global-no-gelation}.
\end{proposition}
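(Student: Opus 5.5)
The plan follows the outline in the Introduction. Fix \(T>0\) and let \(\mu_t=f(t)\,dx+\sigma_t\) be the Lebesgue decomposition. The first goal is to place all singular parts \(\sigma_t\), \(t\in[0,T]\), on one common Lebesgue-null Borel set \(Z_T\).

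\textbf{Step 1: a common null set.} By \eqref{eq:measure-TV-Lipschitz}, the curve \(t\mapsto\mu_t\) is Lipschitz in total variation. Choose a countable dense set \(\{t_n\}\subset[0,T]\), and for each \(n\) a null Borel set \(Z_n\) carrying \(\sigma_{t_n}\). Put \(Z_T=\bigcup_n Z_n\). For general \(t\), pick \(t_{n_k}\to t\). Then \(\sigma_{t_{n_k}}\to\sigma_t\) in TV, because the map \(\mu\mapsto\) (singular part) is a contraction in TV. Each \(\sigma_{t_{n_k}}\) is carried by \(Z_T\), so \(\sigma_t(X\setminus Z_T)=0\). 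Moreover, the absolutely continuous part gives no mass to \(Z_T\), so \(\sigma_t(X)=\mu_t(Z_T)\).

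\textbf{Step 2: test on \(Z_T\).} Apply \eqref{eq:measure-bounded-test} with \(\psi=\one_{Z_T}\):
\[
\sigma_t(X)-\sigma_s(X)=\int_s^t\mathbf Q(\mu_\tau,\mu_\tau)(Z_T)\,d\tau .
\]
By \eqref{eq:measure-collision-definition}, \(\mathbf Q(\mu,\mu)(Z_T)=\tfrac12[\Gamma_0(\mu,\mu)+\Gamma_1(\mu,\mu)](Z_T)-\tfrac12(\nu+\nu_1)(Z_T)\). Lemma~\ref{lem:cross-gain-absolute-continuity} shows that the gain terms charge \(Z_T\) only through \(\Gamma_i(\sigma_\tau,\sigma_\tau)\). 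Hence the gain on \(Z_T\) is at most \(\tfrac12\sum_i\Gamma_i(\sigma_\tau,\sigma_\tau)(X)=\Lambda_{\sigma_\tau,\sigma_\tau}(\text{total})\). The loss term is \(\tfrac12(\nu+\nu_1)(Z_T)\). Since \(\nu(Z_T)=\int\one_{Z_T}(x)\,d\Lambda_{\mu,\mu}\), the loss is at least the mass of \(\Lambda\) restricted to \(\{x\in Z_T\}\cup\{x_1\in Z_T\}\), counted with weights \(\tfrac12\) each. That mass is at least \(\Lambda_{\sigma,\sigma}(\text{total})\), because on \(\{x,x_1\in Z_T\}\) both indicators equal \(1\), and \(\mu\otimes\mu\) restricted to \(Z_T\times Z_T\) is \(\sigma\otimes\sigma\). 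Therefore \(\mathbf Q(\mu_\tau,\mu_\tau)(Z_T)\le0\), and \(\sigma_t(X)\) is nonincreasing on \([0,T]\). Since \(\sigma_0=0\) because \(\mu_0=f_0\,dx\), we get \(\sigma_t=0\) for all \(t\le T\). As \(T\) is arbitrary, \(\mu_t=f(t)\,dx\) for every \(t\ge0\).

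\textbf{Step 3: strong formulation.} Now \(\mu_\tau\ll dx\), so Lemma~\ref{lem:cross-gain-absolute-continuity} gives \(\Gamma_i(\mu_\tau,\mu_\tau)\ll dx\), and the loss marginals \(\nu,\nu_1\) are clearly absolutely continuous. Hence \(\mathbf Q(\mu_\tau,\mu_\tau)=\mathbf Q(f(\tau),f(\tau))\,dx\) with \(\|\mathbf Q(f,f)\|_{L^1}\le2C_{\mathrm{rate}}\) by \eqref{eq:measure-collision-TV}.

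Joint measurability of \((\tau,x)\mapsto \mathbf Q(f(\tau),f(\tau))(x)\) is obtained as follows. The time-dependent family \(\mathbf Q(\mu_\tau,\mu_\tau)\,d\tau\) defines a finite signed measure on \([0,T]\times X\) via the kernel property of Corollary~\ref{cor:measure-equation}. This measure is absolutely continuous with respect to \(d\tau\,dx\). Its Radon--Nikodym density supplies a jointly measurable version agreeing with the sectionwise densities for a.e. \(\tau\). Since \(L^1(X)\) is separable, this gives a strongly measurable, essentially bounded \(L^1\)-valued map, hence \(\mathbf Q(f,f)\in L^\infty(0,\infty;L^1(X))\). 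The measure identity \eqref{eq:measure-integral-identity} then becomes the Bochner identity \eqref{eq:model-Bochner-identity}, and it yields \(f\in W^{1,\infty}(0,\infty;L^1)\) together with continuity into \(L^1\).

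\textbf{Conclusion.} The moment laws \eqref{eq:model-moment-laws}, the \(\Phi\)-bound and \eqref{eq:model-limit-no-gelation} come from Proposition~\ref{prop:global-narrow-limit}. The weighted-\(L^\infty\) class follows from \(M_0,M_1\) conservation and \(M_2\le M_2(f_0)\).

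\textbf{Main obstacle.} I expect the delicate point to be Step 2's sign argument. One has to match the singular gain \(\Gamma_i(\sigma,\sigma)\) exactly against the loss from the \(\sigma\otimes\sigma\) block, while making sure that the mixed losses only help. One also has to justify the common null set of Step 1 via TV-continuity rather than only narrow continuity.
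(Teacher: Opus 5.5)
Your proposal is correct and follows essentially the same route as the paper. You place all singular parts on a common null set $Z_T$ via TV-Lipschitz continuity and contractivity of the singular projection. You test with $\one_{Z_T}$, bounding the $\sigma\otimes\sigma$ gain by the $Z_T\times Z_T$ loss block, with the mixed losses only helping. You then obtain the $L^1$-valued collision density from the space--time Radon--Nikodym argument and separability of $L^1(X)$.
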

\begin{proof}
  Fix \(T>0\).  We first prove that \(\mu_t\) is absolutely continuous
for every \(t\in[0,T]\).  Step~1 uses the total-variation continuity
of \(t\mapsto\mu_t\) to find a single Lebesgue-null Borel set \(Z_T\)
such that $\sigma_t(X\setminus Z_T)=0$ for every $t\in[0,T]$.  This allows us to use \(\one_{Z_T}\) as one fixed
bounded Borel test throughout the interval.  In Step~2, we only need to consider the singular--singular outgoing gains, which are bounded by the corresponding
singular--singular incoming losses.  It follows that
\(
 \mathcal Q(\mu_t,\mu_t)[\one_{Z_T}]\leq0,
\)
so \(t\mapsto\mu_t(Z_T)=\sigma_t(X)\) is nonincreasing.  Since
\(\sigma_0=0\), the singular part vanishes on \([0,T]\).

Once \(\mu_t\ll dx\) has been established, Steps~3--5 show that the
collision measure is also absolutely continuous, choose its density
jointly measurably in time, and rewrite the measure equation as an
\(L^1(X)\)-valued Bochner equation.

\noindent\underline{\textbf{Step 1: concentration of all singular parts
on a fixed null set.}}
For each \(t\in[0,T]\), write the Lebesgue decomposition
\begin{equation}
 \mu_t=f_t\,dx+\sigma_t,
 \qquad
 f_t\in L^1(X),\quad \sigma_t\perp dx.
\end{equation}
Since the singular projection in the Lebesgue decomposition is linear and
contractive on finite signed Radon measures,
\begin{equation}
 \norm{\sigma_t-\sigma_s}_{\mathrm{TV}}
 \leq\norm{\mu_t-\mu_s}_{\mathrm{TV}}
 \leq2C_{\mathrm{rate}}\abs{t-s},
 \qquad 0\leq s,t\leq T,
\end{equation}
by \eqref{eq:measure-TV-Lipschitz}. Choose a countable dense set
\(\{t_j\}_{j\geq1}\subset[0,T]\).  For every \(j\), choose a Borel set
\(Z_j\subset X\) such that
\begin{equation}
 dx(Z_j)=0,\quad
 \sigma_{t_j}(X\setminus Z_j)=0,\quad  Z_T:=\bigcup_{j\geq1}Z_j
\end{equation}
Then \(dx(Z_T)=0\).  Given
\(t\in[0,T]\), choose \(t_{j_k}\to t\).  Since \(Z_{j_k}\subset Z_T\),
\begin{equation}
 \sigma_t(X\setminus Z_T)
 \leq
 \|\sigma_t-\sigma_{t_{j_k}}\|_{\mathrm{TV}}
 +\sigma_{t_{j_k}}(X\setminus Z_T)\leq 2C_{\mathrm{rate}}\abs{t-t_{j_k}}
 \to 0.
\end{equation}
Thus
\begin{equation}
   \sigma_t(X\setminus Z_T)=0
 \qquad\text{for every }t\in[0,T].
\end{equation}
Hence all the singular parts are concentrated on the same
Lebesgue-null Borel set \(Z_T\).  Since \(f_t\,dx(Z_T)=0\), one has
\begin{equation}
 \mu_t\!\restriction_{Z_T}=\sigma_t,
 \qquad
 \mu_t(Z_T)=\sigma_t(X),
 \qquad 0\leq t\leq T.
 \label{eq:common-singular-support}
\end{equation}

\noindent\underline{\textbf{Step 2: ruling out the singular part.}}
Fix \(\tau\in[0,T]\). By \cref{lem:cross-gain-absolute-continuity},
\begin{equation*}
 \Gamma_i(f_\tau\,dx,f_\tau\,dx),\ 
 \Gamma_i(f_\tau\,dx,\sigma_\tau),\ 
 \Gamma_i(\sigma_\tau,f_\tau\,dx)\ll dx,
 \qquad i=0,1.
\end{equation*}  Since \(dx(Z_T)=0\),
\begin{equation}
 \Gamma_i(\mu_\tau,\mu_\tau)(Z_T)
 =
 \Gamma_i(\sigma_\tau,\sigma_\tau)(Z_T)
 \leq
 \Lambda_{\sigma_\tau,\sigma_\tau}(X^2\times (0,1)\times \Sph).
\end{equation}
Since \(\mu_\tau|_{Z_T}=\sigma_\tau\)  and \eqref{eq:common-singular-support},
\begin{align*}
 &\int_{X^2\times (0,1)\times \Sph}
 \bigl[\one_{Z_T}(x)+\one_{Z_T}(x_1)\bigr]
 \,d\Lambda_{\mu_\tau,\mu_\tau}\\
 &\qquad=
 \Lambda_{\sigma_\tau,\mu_\tau}(X^2\times (0,1)\times \Sph)
 +\Lambda_{\mu_\tau,\sigma_\tau}(X^2\times (0,1)\times \Sph)\\
 &\qquad=
 2\Lambda_{\sigma_\tau,\sigma_\tau}(X^2\times (0,1)\times \Sph)
 +\Lambda_{\sigma_\tau,f_\tau\, dx}(X^2\times (0,1)\times \Sph)
 +\Lambda_{f_\tau\, dx,\sigma_\tau}(X^2\times (0,1)\times \Sph).
\end{align*}
Using \eqref{eq:measure-collision-set-form}, we obtain
\begin{align}
 \mathcal Q(\mu_\tau,\mu_\tau)[\one_{Z_T}]
 &=
 \frac12\left[
 \Gamma_0(\mu_\tau,\mu_\tau)(Z_T)
 +\Gamma_1(\mu_\tau,\mu_\tau)(Z_T)-
 \int_{X^2\times(0,1)\times\Sph}
 (\one_{Z_T}(x)+\one_{Z_T}(x_1))
 \,d\Lambda_{\mu_\tau,\mu_\tau}
 \right]
 \notag\\
 &\leq
 -\frac12\left[
 \Lambda_{\sigma_\tau,f_\tau\,dx}
 \bigl(X^2\times(0,1)\times\Sph\bigr)
 +
 \Lambda_{f_\tau\,dx,\sigma_\tau}
 \bigl(X^2\times(0,1)\times\Sph\bigr)
 \right]
 \leq0.
 \label{eq:singular-number-dissipation}
\end{align}
Applying \eqref{eq:measure-bounded-test} with
\(\psi=\one_{Z_T}\),
\eqref{eq:common-singular-support} and
\eqref{eq:singular-number-dissipation} give
\begin{equation}
 \sigma_t(X)-\sigma_s(X)
 =
 \int_s^t
 \mathcal Q(\mu_\tau,\mu_\tau)[\one_{Z_T}]\,d\tau
 \leq0,
 \qquad 0\leq s\leq t\leq T.
\end{equation}
Since \(\mu_0=f_0\,dx\), one has \(\sigma_0=0\).  Positivity of
\(\sigma_t\) therefore implies \(\sigma_t=0\) for every \(t\in[0,T]\).
As \(T>0\) was arbitrary,
\begin{equation}
 \mu_t=f_t\,dx
 \qquad\text{for every }t\geq0.
 \label{eq:limit-measure-absolute-continuity}
\end{equation}

\noindent\underline{\textbf{Step 3: absolute continuity of the
collision measure.}}
Fix \(t\geq0\).  By
\eqref{eq:limit-measure-absolute-continuity}, \(\mu_t\ll dx\).
Since \(\Lambda_{\mu_t,\mu_t}=\Lambda_{\mu_t}\), applying
\cref{lem:cross-gain-absolute-continuity} with
\(\rho=\nu=\mu_t\) gives
\(
 \Gamma_0(\mu_t,\mu_t)\ll dx,
 \Gamma_1(\mu_t,\mu_t)\ll dx.
\)
Thus the two outgoing marginals are absolutely continuous.

We next consider the two incoming marginals.  Let \(A\subset X\) be
Borel with \(dx(A)=0\).  Since \(\mu_t\ll dx\), one has
\(\mu_t(A)=0\).  By
\eqref{eq:collision-parameter-measure},
\(\Lambda_{\mu_t}\) has a nonnegative density with respect to the
product measure
\(
 \mu_t\otimes\mu_t\otimes d\alpha\otimes d\omega.
\)
Consequently, Tonelli's theorem gives
\begin{align*}
 \int_{X^2\times(0,1)\times\Sph}
 \one_A(x)\,d\Lambda_{\mu_t}
 &=
 \Lambda_{\mu_t}
 \bigl(A\times X\times(0,1)\times\Sph\bigr)
 =0,\\
 \int_{X^2\times(0,1)\times\Sph}
 \one_A(x_1)\,d\Lambda_{\mu_t}
 &=
 \Lambda_{\mu_t}
 \bigl(X\times A\times(0,1)\times\Sph\bigr)
 =0,
\end{align*}
which are precisely the values on \(A\) of the push-forwards of
\(\Lambda_{\mu_t}\) under the two incoming maps
\(
 (x,x_1,\alpha,\omega)\mapsto x,
 (x,x_1,\alpha,\omega)\mapsto x_1.
\)
Hence both incoming marginals are also absolutely continuous with
respect to \(dx\).

For the chosen null set \(A\), the four terms in
\eqref{eq:measure-collision-definition} therefore satisfy
\begin{align*}
 \mathbf Q(\mu_t,\mu_t)(A)
 &=
 \frac12\Biggl[
 \Gamma_0(\mu_t,\mu_t)(A)
 +\Gamma_1(\mu_t,\mu_t)(A)\\
 &
 -\int_{X^2\times(0,1)\times\Sph}
 \one_A(x)\,d\Lambda_{\mu_t}
 -\int_{X^2\times(0,1)\times\Sph}
 \one_A(x_1)\,d\Lambda_{\mu_t}
 \Biggr]
 =0.
\end{align*}
Since \(A\) was an arbitrary \(dx\)-null Borel set, we conclude that
\begin{equation}
 \mathbf Q(\mu_t,\mu_t)\ll dx
 \qquad\text{for every }t\geq0.
 \label{eq:limit-collision-measure-ac}
\end{equation}

\noindent\underline{\textbf{Step 4: a jointly measurable collision
density.}}
Fix \(T>0\). 

\noindent\emph{(a) Construction of a space--time density.} By \cref{cor:measure-equation},
\(t\mapsto\mathbf Q(\mu_t,\mu_t)\) is a finite signed-measure kernel
on \(X\).  Since its total variation is uniformly bounded by
\eqref{eq:measure-collision-TV}, kernel integration defines a
finite signed measure on \((0,T)\times X\) by
\begin{equation}
 \mathsf Q_T(B)
 :=
 \int_0^T
 \mathbf Q(\mu_\tau,\mu_\tau)(B_\tau)\,d\tau,
 \qquad
 B_\tau:=\{x\in X:(\tau,x)\in B\}.
 \label{eq:space-time-collision-measure}
\end{equation}
Moreover,
\begin{equation*}
 \norm{\mathsf Q_T}_{\mathrm{TV}}
 \leq
 \int_0^T
 \norm{\mathbf Q(\mu_\tau,\mu_\tau)}_{\mathrm{TV}}\,d\tau
 \leq2TC_{\mathrm{rate}}.
\end{equation*}
If \((d\tau\otimes dx)(B)=0\), then \(dx(B_\tau)=0\) for almost every
\(\tau\) by Fubini's theorem.
\eqref{eq:limit-collision-measure-ac} then gives
\(
 \mathbf Q(\mu_\tau,\mu_\tau)(B_\tau)=0
\)
for almost every \(\tau\).  Hence
\(
 \mathsf Q_T\ll d\tau\otimes dx.
\)
The Radon--Nikodym theorem therefore gives a jointly measurable
function \(\mathfrak Q^T\in L^1((0,T)\times X)\) such that
\begin{equation*}
 d\mathsf Q_T(\tau,x)
 =
 \mathfrak Q^T(\tau,x)\,d\tau\,dx.
\end{equation*}

\noindent\emph{(b) Identification of almost every time slice.}
Let \(\mathcal A\) be a countable algebra generating the Borel
\(\sigma\)-algebra of \(X\).  For every Borel set
\(I\subset(0,T)\) and every \(A\in\mathcal A\),
\begin{equation*}
 \int_I\int_A\mathfrak Q^T(\tau,x)\,dx\,d\tau
 =
 \mathsf Q_T(I\times A)
 =
 \int_I\mathbf Q(\mu_\tau,\mu_\tau)(A)\,d\tau.
\end{equation*}
Hence there is a null set \(N_A\subset(0,T)\) such that
\begin{equation*}
 \int_A \mathfrak Q^T(\tau,x)\,dx
 =
 \mathbf Q(\mu_\tau,\mu_\tau)(A),
 \qquad
 \tau\in(0,T)\setminus N_A.
\end{equation*}
By Fubini's theorem, there is also a null set \(N_0\subset(0,T)\) such
that \(\mathfrak Q^T(\tau,\cdot)\in L^1(X)\) for every
\(\tau\in(0,T)\setminus N_0\).  Since \(\mathcal A\) is countable,
\begin{equation*}
 N_T:=N_0\cup\bigcup_{A\in\mathcal A}N_A
\end{equation*}
is null.  Hence, for every \(\tau\in(0,T)\setminus N_T\),
\begin{equation*}
 \int_A \mathfrak Q^T(\tau,x)\,dx
 =
 \mathbf Q(\mu_\tau,\mu_\tau)(A)
 \qquad\text{for every }A\in\mathcal A.
\end{equation*}
Fix such a \(\tau\).  The set functions
\begin{equation*}
 B\longmapsto\int_B \mathfrak Q^T(\tau,x)\,dx,
 \qquad
 B\longmapsto\mathbf Q(\mu_\tau,\mu_\tau)(B)
\end{equation*}
are finite signed measures on \(X\) and agree on the algebra
\(\mathcal A\).  Since \(\mathcal A\) generates the Borel
\(\sigma\)-algebra of \(X\), uniqueness of finite signed measures
gives
\begin{equation}
 \mathfrak Q^T(\tau,\cdot)\,dx
 =
 \mathbf Q(\mu_\tau,\mu_\tau),
 \qquad
 \tau\in(0,T)\setminus N_T.
 \label{eq:space-time-density-slices}
\end{equation}
It follows from \eqref{eq:space-time-density-slices} and
\eqref{eq:measure-collision-TV} that
\begin{equation*}
 \|\mathfrak Q^T(\tau)\|_{L^1(X)}
 =
 \norm{\mathbf Q(\mu_\tau,\mu_\tau)}_{\mathrm{TV}}
 \leq2C_{\mathrm{rate}}
 \qquad\text{for almost every }\tau\in(0,T).
\end{equation*}
Redefine \(\mathfrak Q^T(\tau,\cdot)=0\) for \(\tau\in N_T\).
Joint measurability and the separability of \(L^1(X)\) show that
\(\tau\mapsto\mathfrak Q^T(\tau)\) is strongly measurable as an
\(L^1(X)\)-valued map.

\noindent\emph{(c) Globalization in time.}
Apply this construction with \(T=n\) for each \(n\in\mathbb N\), and
denote the resulting density by \(\mathfrak Q^n\).  Define
\(\mathfrak Q(0)=0\) and
\begin{equation*}
 \mathfrak Q(t):=\mathfrak Q^n(t),
 \qquad
 t\in[n-1,n),\quad t>0,\quad n\geq1.
\end{equation*}
This gives a strongly measurable function
\(
 \mathfrak Q\in L^\infty(0,\infty;L^1(X)).
\)
Writing \(\mathfrak Q_t=\mathfrak Q(t,\cdot)\), we have
\begin{equation}
 \mathfrak Q_t(x)\,dx
 =
 \mathbf Q(\mu_t,\mu_t),
 \qquad
 \norm{\mathfrak Q_t}_{L^1(X)}
 \leq2C_{\mathrm{rate}}
 \quad\text{for almost every }t>0.
 \label{eq:limit-collision-density-bound}
\end{equation}

\noindent\underline{\textbf{Step 5: the \(L^1\)-valued collision
equation.}}
Set \(f(t)=f_t\).  Since
\(\mu_t-\mu_s=(f(t)-f(s))\,dx\),
\eqref{eq:measure-TV-Lipschitz} gives
\begin{equation}
 \norm{f(t)-f(s)}_{L^1(X)}
 =
 \norm{\mu_t-\mu_s}_{\mathrm{TV}}
 \leq2C_{\mathrm{rate}}\abs{t-s},
 \qquad s,t\geq0.
 \label{eq:limit-density-Lipschitz}
\end{equation}
By \eqref{eq:limit-collision-density-bound},
\(t\mapsto\mathfrak Q_t\) is Bochner integrable on every bounded time
interval.  For every \(0\leq s\leq t\) and every Borel set
\(A\subset X\),
\eqref{eq:measure-integral-identity} and
\eqref{eq:limit-collision-density-bound} give
\begin{align*}
 &\int_A[f(t,x)-f(s,x)]\,dx
=
 \int_s^t\mathbf Q(\mu_\tau,\mu_\tau)(A)\,d\tau\\
 &\qquad=
 \int_s^t\int_A\mathfrak Q_\tau(x)\,dx\,d\tau=
 \int_A
 \left(\int_s^t\mathfrak Q_\tau\,d\tau\right)(x)\,dx.
\end{align*}
Since this holds for every Borel set \(A\),
\begin{equation}
 f(t)-f(s)
 =
 \int_s^t\mathfrak Q_\tau\,d\tau
 \qquad\text{in }L^1(X).
 \label{eq:limit-L1-increment}
\end{equation}
For almost every \(\tau>0\), define
\(
 \mathbf Q(f(\tau),f(\tau)):=\mathfrak Q_\tau.
\)
Then
\begin{equation*}
 \mathbf Q(f(\tau),f(\tau))\,dx
 =
 \mathbf Q(\mu_\tau,\mu_\tau)
 \qquad\text{for almost every }\tau>0.
\end{equation*}
Taking \(s=0\) in \eqref{eq:limit-L1-increment}, we obtain
\begin{equation}
 f(t)
 =
 f_0+\int_0^t\mathbf Q(f(\tau),f(\tau))\,d\tau
 \qquad\text{in }L^1(X),
 \label{eq:limit-Bochner-identity}
\end{equation}
where the integral is a Bochner integral.  Consequently,
\begin{equation}
 \mathbf Q(f,f)\in L^\infty(0,\infty;L^1(X)),
 \qquad
 \partial_t f=\mathbf Q(f,f)
 \quad\text{for almost every }t>0.
\end{equation}
Equation \eqref{eq:limit-Bochner-identity}, together with
\eqref{eq:limit-collision-density-bound}, gives
\(
 f\in W^{1,\infty}(0,\infty;L^1(X)).
\)
Moreover, since \(\mu_t=f(t,x)\,dx\),
\eqref{eq:measure-bounded-test} becomes
\eqref{eq:model-bounded-test-identity}.  Finally,
\eqref{eq:limit-physical-moments} gives
\begin{equation}
 \sup_{t\geq0}
 \int_X(1+m+m|v|^2)f(t,x)\,dx 
 \leq
 M_0(f_0)+M_1(f_0)+M_2(f_0).
\end{equation}
Together with
\eqref{eq:limit-collision-density-bound} and
\eqref{eq:limit-Bochner-identity}, this proves that \(f\) is the
global integral weak solution in
\cref{def:model-integral-solution}.
\end{proof}
\begin{proof}[Proof of \cref{thm:model-global-no-gelation}]
If \(M_0(f_0)=0\), then \(f_0=0\) almost everywhere, and the zero
solution has all the asserted properties.  Assume \(M_0(f_0)>0\).
By \cref{lem:Phi-properties}, the function \(\Phi\) defined in
\eqref{eq:Phi-definition} satisfies
\eqref{eq:model-superlinear-weight}.  Let \(f_N\) be the approximate
solutions supplied by \cref{prop:truncated-solutions}.  Then
\cref{thm:Phi-propagation,prop:no-gelation} give
\eqref{eq:model-uniform-Phi-bound} and
\eqref{eq:model-approximate-no-gelation}.

By \cref{prop:global-narrow-limit}, a subsequence of the approximate
solutions converges to a narrowly continuous measure-valued curve
\((\mu_t)_{t\geq0}\) satisfying
\eqref{eq:limit-physical-moments},
\eqref{eq:limit-Phi-bound}, and
\eqref{eq:measure-limit-no-gelation}.  The collision-form
identification in
\cref{prop:collision-identification,cor:measure-equation}, followed by
\cref{prop:absolute-continuity-strong}, yields
\(\mu_t=f(t,x)\,dx\), where \(f\) is a global integral weak solution
satisfying \eqref{eq:model-global-solution-class} and
\eqref{eq:model-global-collision-density}.  Since
\(\mu_t=f(t,x)\,dx\), \eqref{eq:limit-physical-moments} gives
\eqref{eq:model-moment-laws}, while
\eqref{eq:measure-limit-no-gelation} gives
\eqref{eq:model-limit-no-gelation}.  This proves all the assertions.
\end{proof}
\section*{Acknowledgments}
S. Luo gratefully acknowledges the hospitality of the Department of
Mathematics at Duke University during his visit in summer 2026, where
part of this work was carried out. 

\noindent\emph{Declaration of AI use.}
During the preparation of this manuscript, the authors used OpenAI's ChatGPT to assist with literature searches and English-language
editing. All AI-assisted
language was reviewed and revised by the authors, who take full
responsibility for the content and arguments of the
manuscript.
\bibliographystyle{amsalpha}
\bibliography{references}

@article{BenNaimKrapivsky2003,
  author  = {Ben-Naim, Eli and Krapivsky, P. L.},
  title   = {Exchange-Driven Growth},
  journal = {Physical Review E},
  volume  = {68},
  number  = {3},
  pages   = {031104},
  year    = {2003},
  doi     = {10.1103/PhysRevE.68.031104}
}

@article{SiGiri2025,
  author  = {Si, Saroj and Giri, Ankik Kumar},
  title   = {Existence and Non-Existence for Exchange-Driven Growth
             Model},
  journal = {Nonlinearity},
  volume  = {38},
  number  = {12},
  pages   = {125014},
  year    = {2025},
  doi     = {10.1088/1361-6544/ae277b}
}

@article{DegondLiu2025,
  author  = {Degond, Pierre and Liu, Jian-Guo},
  title   = {Binary Particle Collisions with Mass Exchange},
  journal = {Journal of Statistical Physics},
  volume  = {192},
  number  = {2},
  pages   = {27},
  year    = {2025},
  doi     = {10.1007/s10955-025-03406-z}
}

@misc{LuoLiu2026,
  author        = {Luo, Siwei and Liu, Jian-Guo},
  title         = {On the Spatially Homogeneous {Boltzmann} Equation
                   with Mass Exchange},
  year          = {2026},
  eprint        = {2607.20684},
  archivePrefix = {arXiv},
  primaryClass  = {math.AP},
  note          = {Preprint, arXiv:2607.20684},
  doi           = {10.48550/arXiv.2607.20684}
}

@article{Flory1941,
  author  = {Flory, Paul J.},
  title   = {Molecular Size Distribution in Three Dimensional
             Polymers. {I}. Gelation},
  journal = {Journal of the American Chemical Society},
  volume  = {63},
  number  = {11},
  pages   = {3083--3090},
  year    = {1941},
  doi     = {10.1021/ja01856a061}
}

@book{Friedlander2000,
  author    = {Friedlander, Sheldon K.},
  title     = {Smoke, Dust, and Haze: Fundamentals of Aerosol Dynamics},
  edition   = {2},
  publisher = {Oxford University Press},
  address   = {New York},
  year      = {2000},
  isbn      = {9780195129991}
}

@article{Aldous1999,
  author  = {Aldous, David J.},
  title   = {Deterministic and Stochastic Models for Coalescence
             (Aggregation and Coagulation): A Review of the Mean-Field
             Theory for Probabilists},
  journal = {Bernoulli},
  volume  = {5},
  number  = {1},
  pages   = {3--48},
  year    = {1999},
  doi     = {10.2307/3318611}
}

@article{Leyvraz2003,
  author  = {Leyvraz, Fran{\c{c}}ois},
  title   = {Scaling Theory and Exactly Solved Models in the Kinetics
             of Irreversible Aggregation},
  journal = {Physics Reports},
  volume  = {383},
  number  = {2--3},
  pages   = {95--212},
  year    = {2003},
  doi     = {10.1016/S0370-1573(03)00241-2}
}

@book{BanasiakLambLaurencot2019,
  author    = {Banasiak, Jacek and Lamb, Wilson and
               Lauren{\c{c}}ot, Philippe},
  title     = {Analytic Methods for Coagulation--Fragmentation Models,
               Volume I},
  series    = {Monographs and Research Notes in Mathematics},
  publisher = {Chapman and Hall/CRC},
  address   = {Boca Raton},
  year      = {2019},
  isbn      = {9781498772655},
  doi       = {10.1201/9781315154428}
}

@article{EscobedoMischlerPerthame2002,
  author  = {Escobedo, Miguel and Mischler, St{\'e}phane and
             Perthame, Beno{\^\i}t},
  title   = {Gelation in Coagulation and Fragmentation Models},
  journal = {Communications in Mathematical Physics},
  volume  = {231},
  number  = {1},
  pages   = {157--188},
  year    = {2002},
  doi     = {10.1007/s00220-002-0680-9}
}

@article{EscobedoLaurencotMischlerPerthame2003,
  author  = {Escobedo, Miguel and Lauren{\c{c}}ot, Philippe and
             Mischler, St{\'e}phane and Perthame, Beno{\^\i}t},
  title   = {Gelation and Mass Conservation in
             Coagulation--Fragmentation Models},
  journal = {Journal of Differential Equations},
  volume  = {195},
  number  = {1},
  pages   = {143--174},
  year    = {2003},
  doi     = {10.1016/S0022-0396(03)00134-7}
}

@article{LaurencotVanRoessel2015,
  author  = {Lauren{\c{c}}ot, Philippe and van Roessel, Henry J.},
  title   = {Absence of Gelation and Self-Similar Behavior for a
             Coagulation--Fragmentation Equation},
  journal = {SIAM Journal on Mathematical Analysis},
  volume  = {47},
  number  = {3},
  pages   = {2355--2374},
  year    = {2015},
  doi     = {10.1137/140976236}
}

@book{BinghamGoldieTeugels1987,
  author    = {Bingham, Nicholas H. and Goldie, Charles M. and
               Teugels, Jozef L.},
  title     = {Regular Variation},
  series    = {Encyclopedia of Mathematics and its Applications},
  volume    = {27},
  publisher = {Cambridge University Press},
  address   = {Cambridge},
  year      = {1987},
  doi       = {10.1017/CBO9780511721434}
}

@article{AshgrizPoo1990,
  author  = {Ashgriz, Nasser and Poo, J. Y.},
  title   = {Coalescence and Separation in Binary Collisions of Liquid
             Drops},
  journal = {Journal of Fluid Mechanics},
  volume  = {221},
  pages   = {183--204},
  year    = {1990},
  doi     = {10.1017/S0022112090003536}
}

@article{AnidjarTambourGreenberg1995,
  author  = {Anidjar, F. and Tambour, Y. and Greenberg, J. B.},
  title   = {Mass Exchange between Droplets during Head-On Collisions
             of Multisize Sprays},
  journal = {International Journal of Heat and Mass Transfer},
  volume  = {38},
  number  = {18},
  pages   = {3369--3383},
  year    = {1995},
  doi     = {10.1016/0017-9310(95)00091-M}
}

@article{FilippovZuritaRosner2000,
  author  = {Filippov, Andrey V. and Zurita, Mauricio and
             Rosner, Daniel E.},
  title   = {Fractal-Like Aggregates: Relation between Morphology and
             Physical Properties},
  journal = {Journal of Colloid and Interface Science},
  volume  = {229},
  number  = {1},
  pages   = {261--273},
  year    = {2000},
  doi     = {10.1006/jcis.2000.7027}
}
\end{document}